    \newcommand{\mytitle}
    {Symmetric Systems and their Applications to Root         
      Systems Extended by Abelian Groups}                     
    \newtheorem{lem}{Lemma}[section]
    \newtheorem{thm}[lem]{Theorem}
    \newtheorem{prop}[lem]{Proposition}
    \newtheorem{cor}[lem]{Corollary}
    {\theorembodyfont{\rm}
      \newtheorem{defn*}[lem]{Definition}
      \newtheorem{rem*}[lem]{Remark}
      \newtheorem{exmp}[lem]{Example}

      \newtheorem{constr*}[lem]{Construction}
    }
    \newenvironment{constr}{\begin{constr*}}{\hspace*{\fill}
        \raisebox{.0mm}{$\diamond$}\end{constr*}}
    \newenvironment{rem}{\begin{rem*}}{\hspace*{\fill}
        \raisebox{.0mm}{$\diamond$}\end{rem*}}
    \newenvironment{defn}{\begin{defn*}}{\hspace*{\fill}
        \raisebox{0mm}{$\diamond$}
      \end{defn*}}
    \newenvironment{lastdisplay}%
    {\par\noindent~\begin{minipage}[b]{0.97\linewidth}}%
    {\end{minipage}}
    \newenvironment{lastalign*}%
    {\par\noindent\begin{minipage}[b]{0.95\linewidth}
        \medskip\begin{center}\begin{math}\displaystyle}%
    {\end{math}\end{center}
          \end{minipage}}
    \newenvironment{lastenumerate}%
    {\par\medskip\noindent\begin{minipage}[b]{0.95\linewidth}
      \begin{enumerate}}%
    {\end{enumerate}\end{minipage}}
    \newenvironment{proof}{\par\medskip\noindent\upshape\rmfamily
      \textbf{Proof}.\ }{\hspace*{\fill}\rule{1.2ex}{1.2ex}\par\medskip}
    \newenvironment{proof*}{\par\medskip\noindent\upshape\rmfamily
      \textbf{Proof}.\ }{\par\medskip}
    \newenvironment{lastequation*}{
      \par\medskip\noindent\hspace*{\fill}\begin{math}\displaystyle
      }{
      \end{math}\hspace*{\fill}\rule{1.2ex}{1.2ex}\par}
    \newenvironment{lasteqnarray*}{
      \par
      \begin{minipage}[t]{\hfill}
        \begin{eqnarray*}
        }{      
        \end{eqnarray*}
      \end{minipage}
      \begin{minipage}[b]{0.5cm}
        \rule{1.2ex}{1.2ex}
      \end{minipage}
    }
    \renewcommand{\emptyset}{\varnothing}
    \newcommand{\newop}[2]{\newcommand{#1}{\mathop{\mathsf{\strut #2}}\nolimits}}
    \newop{\Hom}{\mathrm{Hom}}
    \newop{\Aut}{\mathrm{Aut}}
    \newop{\End}{\mathrm{End}}
    \newop{\co}{\mathrm{co}}
    \newop{\id}{\mathrm{id}}
    \newop{\im}{\mathrm{im}}
    \newop{\rk}{\mathrm{rk}}
    \newop{\spann}{\mathrm{span}}
    \newop{\conv}{\mathrm{conv}}
    \newop{\rank}{\mathrm{rank}}
    \newop{\GL}{\mathrm{GL}}
    \newop{\PDS}{\mathrm{PDS}}
    \newop{\Oo}{\mathrm{O}}
    \newop{\cone}{\mathrm{cone}}
    \newop{\algint}{\mathrm{algint}}
    \newop{\inn}{\mathrm{int}}
    \newop{\aff}{\mathrm{aff}}
    \newcommand{\Q}{\mathbb{Q}}
    \newcommand{\Z}{\mathbb{Z}}
    \newop{\M}{\mathbf{M}}
    \newcommand{\eff}{\mathrm{eff}}
    \newcommand{\fix}{\mathrm{fix}}
    \newop{\PGL}{\mathrm{PGL}}
    \newop{\sL}{\mathrm{sl}}
    \newop{\tr}{\mathrm{tr}}
    \newcommand{\sbullet}{\hspace{0.5mm}\begin{picture}(1,5)(-.5,-3)\circle*{2}\end{picture}\hspace{0.5mm}}
    \newcommand{\propname}[1]{{\rm\bf(#1)}}
    \renewcommand{\phi}{\varphi}
    \renewcommand{\check}[1]{#1^\vee}
    \newcommand{\red}{\mathrm{red}}
    \newcommand{\ab}{\mathrm{ab}}
    \newcommand{\sh}{\mathrm{sh}}
    \newcommand{\lng}{\mathrm{lg}}
    \newcommand{\ex}{\mathrm{ex}}
    \newcommand{\rd}{\mathrm{red}}
    \newcommand{\trim}{\mathrm{trim}}
    \newcommand{\calU}{\mathcal{U}}
    \newcommand{\calN}{\mathcal{N}}
    \newcommand{\calV}{\mathcal{V}}
    \newcommand{\calP}{\mathcal{P}}
    \newcommand{\calQ}{\mathcal{Q}}
    \newcommand{\calW}{\mathcal{W}}
    \newcommand{\calA}{\mathcal{A}}
    \newcommand{\calB}{\mathcal{B}}
    \newcommand{\calK}{\mathcal{K}}
    \newcommand{\calX}{\mathcal{X}}
    \newcommand{\calY}{\mathcal{Y}}
    \newcommand{\calZ}{\mathcal{Z}}
    \newcommand{\calT}{\mathcal{T}}
    \newcommand{\calL}{\mathcal{L}}
\begin{document}

    \title{\mytitle\footnote{ 
    The research for this article was supported by a postdoctoral
    fellowship at 
    the Department of Mathematics and Statistics at
    the Dalhousie University.
    }}
    \author{Georg W. Hofmann}
    
    \maketitle
    \begin{abstract}
      We investigate the class of root systems $R$ 
      obtained by extending an
      irreducible root system by a torsion-free group $G$. In this
      context there is a Weyl group $\calW$ and a group $\calU$ with
      the presentation by conjugation. We show under additional
      hypotheses that the kernel of the
      natural homomorphism $\calU\to\calW$ is isomorphic to the kernel
      of $\calU^\ab\to\calW^\ab$, where $\calU^\ab$ and $\calW^\ab$ 
      denote the
      abelianizations of $\calU$ and $\calW$ respectively. For this we
      introduce the concept of a symmetric system, a discrete version
      of the concept of a symmetric space.
      \hfill\break
      {\sl Mathematics Subject Classification 2000:} 
      20F55, 
      17B65, 
      17B67, 
      22E65, 
      22E40 
      \hfill\break                      
      {\sl Key Words and Phrases:} 
      Weyl group, 
      root system,
      presentation by conjugation,
      extended affine Weyl group (EAWeG),
      extended affine root system (EARS),
      irreducible root system extended by an abelian group.
    \end{abstract} 
    
    \pagestyle{headings}
    \tableofcontents

    \section{Introduction}                                           %
    Extended affine root systems (EARSs) 
    belong to the theory of
    extended affine Lie algebras (EALAs). The class of EALAs
    is a generalization of the class of affine Kac-Moody
    algebras. In \cite{EALA_AMS} EALAs are introduced axiomatically and
    the EARSs are classified.
    Lie algebras in this class were studied by physicists under the
    name of irreducible quasi-simple Lie algebras in
    \cite{torresani}. 
    Several years earlier, a certain subclass of EALAs was already studied in
    \cite{saito1}. In \cite{yoshii-rootabelian} the notion of
    a root system extended by an abelian group $G$ is
    introduced. This more modern approach generalizes 
    the notions of affine root systems given in \cite{EALA_AMS} 
    and \cite{saito1} and does not rely on the idea of the root
    system being embedded discretely in a real vector
    space. Arguments using discreteness are replaced by algebraic
    ones and the abelian group $G$ is no longer necessarily free and
    finitely generated.
    
    In the context of EARSs a group $\calU$ is given by the so-called
    presentation by conjugation:
    \begin{align*}
      \calU
      \cong
      \big\langle(\hat r_\alpha)_{\alpha\in R^\times}
      ~|~
      &
      \hat r_\alpha=\hat r_\beta
      \text{~~~if $\alpha$ and $\beta$ are linearly dependent,}\\
      &\hat r_\alpha^2=1,~~
      \hat r_\alpha\hat r_\beta\hat r_\alpha^{-1}
      =\hat r_{r_\alpha(\beta)}; 
      \text{ for }\alpha,\beta\in R^\times\big\rangle,
    \end{align*} 
    where $R^\times$ stands for the set of anisotropic roots.
    There is a natural group 
    homomorphism from $\calU$ into the Weyl group $\calW$. 
    Several results are known about whether this homomorphism is
    injective. 
    (See for example
    \cite{krylyuk},
    \cite{EAWG},
    \cite{azamReduced},
    \cite{MR2341017},
    \cite{azamA1},
    \cite{myWeyl}.) 
    
    This article attempts to modernize the theory of presentation
    by conjugation of Weyl groups by
    extending the class of root systems under
    consideration to the class proposed by \cite{yoshii-rootabelian}
    and by choosing a systematic approach using the new notion of a
    symmetric system. 
    This is a discrete version of the notion 
    of a symmetric space treated in \cite{MR0239005}. With our approach
    the group $\calU$ is the initial element of a certain
    category of reflection groups. 
    We generalize the notion of the Weyl group $\calW$ to the case
    where $G$ is torsion-free and $R$ is tame, that is, $G$ 
    possesses a so-called twist
    decomposition if $R$ is of non-simply laced type. 
    (This decomposition is a
    generalization of a decomposition that exists, if $G$ is finitely
    generated and free abelian.) 
    We obtain the following characterization of the
    relationship between the group $\calU$ and the Weyl
    group~$\calW$. 

    Suppose $R$ is a tame irreducible reduced
    root system extended by a free abelian group
    $G$.
    Denote the abelianizations of $\calU$
    and $\calW$ by $\calU^\ab$ and $\calW^\ab$, respectively.
    \par\medskip\noindent
    {\bf Theorem}~
     The natural homomorphism 
     \begin{math}
       {\phi:
       \ker(\calU\to\calV)
       \to
       \ker(\calU^\ab\to\calW^\ab)}
     \end{math}
     is an isomorphism.
    \par\medskip\noindent
    This is the main result of this article and it is proved in the
    last section. We explore the notion of a symmetric system in
    Sections 2 to 5 only as far as needed to prove this result. In
    Section~6 we obtain the necessary results about finite irreducible
    root systems and the action of their Weyl group on the root
    lattice and the coroot lattice.

    Among other corollaries we derive the following
    result from the above theorem by investigating the orbits of
    $\calW$ in the
    root system:
    Suppose $R$ is a
    tame irreducible reduced root system extended by a free abelian group
    $G$. 
    \par\medskip\noindent
    {\bf Corollary}~
    If $R$ is not of type $A_1$,
    $B_\ell(\ell\ge2)$ and 
    $C_\ell(\ell\ge3)$, then
    $\calU\to\calW$ is injective.
    \par\medskip\noindent

    In other words, in this case, the Weyl group has the presentation
    by conjugation. This is a generalization of results known in the
    case where $G$ is a finitely generated free abelian group. We
    expect that various other results will be generalized 
    using the theorem above and by investigating the groups
    $\calU^\ab$ and $\calW^\ab$, which are elementary abelian
    two-groups and thus are accessible to arguments of linear algebra over
    the Galois field $\mathrm{GF}(2)$.

    \section{Symmetric systems}                                      %

    In this section we introduce the notion of symmetric system. Its
    definition requires two of the axioms of a symmetric space in the
    sense of \cite{MR0239005}. To a given symmetric system $T$ 
    we associate
    the category of $T$-reflection groups. It is an important observation
    that every morphism from one reflection group to another
    constitutes a central extension. The category turns out to be very
    well-behaved: Morphisms are uniquely determined by the reflection
    groups from which and into which they are defined. The
    category can be understood as a complete lattice.
    
    \begin{defn}\propname{Symmetric system}
      Let $T$ be a set with a (not necessarily associative) multiplication
      \begin{align*}
        \mu: T\times T\to T,~
        (s,t)\mapsto s.t.
      \end{align*}
      Then the pair $(T,\mu)$ is called a
      \emph{symmetric system}\index{symmetric system}
      if the following conditions are satisfied for all $s$, $t$ and
      $r\in T$:
      \begin{enumerate}
  \item[(S1)]
        \begin{math}
          s.(s.t)=t,
        \end{math}
  \item[(S2)]
        \begin{math}
          r.(s.t)=(r.s).(r.t).
        \end{math}
      \end{enumerate}
      By abuse of language, 
      we will sometimes say that $T$ is a symmetric system
      instead of saying that $(T,\mu)$ is a symmetric system. If
      $s.t=t$ for all $s$ and $t\in T$ then we call $\mu$ the 
      \emph{trivial multiplication}. If $s$ and $t\in T$ we write
      $s\perp t$ if $s\neq t$, $s.t=t$ and $t.s=s$.
    \end{defn}
    
    \begin{exmp}\label{exmp:symSys}
      Let $\calX$ be a group with a subset $T$ of
      involutions that is invariant under conjugation. Then 
      the multiplication
      \begin{align*}
        T\times T\to T,~(t,s)\mapsto t.s=tst^{-1}
      \end{align*}
      turns $T$ into a symmetric system. We have $s\perp t$ if and
      only if $s$ and
      $t$ are distinct and commute.
    \end{exmp}

    For the remainder of this section, let $T$ be a symmetric system. 

    \begin{defn}\propname{Reflection group}\label{defn:reflGroup}
      Let  $\calX$ be a group acting on $T$. 
      We will denote the element in $T$ obtained by $x$ acting on
      $t$ by $x.t$. 
      Let 
      \begin{equation*}
        \sbullet^\calX:~
        T\to\calX,~t\mapsto t^\calX
      \end{equation*}
      be a function. Then $(\calX,\sbullet^\calX)$ is called a
      $T$-\emph{reflection group}%
      \index{reflection group},
      if the following conditions are satisfied:
      \begin{enumerate}
  \item[(G1)] The group $\calX$ is generated by the set
        $T^\calX:=\{t^\calX~|~t\in T\}$.
  \item[(G2)] For all $s$ and $t\in T$ we have
        \begin{math}
          t^\calX.s=t.s.
        \end{math}
  \item[(G3)] For all $s$ and $t\in T$ we have
        \begin{math}
          t^\calX.
          s^\calX
          ~=~
          (t.s)^\calX.
        \end{math}
  \item[(G4)] For every $t\in T$ we have
        \begin{math}
          (t^\calX)^2
          =
          1.
        \end{math}
      \end{enumerate}
      If we do not need to specify the map $\sbullet^\calX$ we will
      also say that $\calX$ is a reflection group instead of
      saying that $(\calX,\sbullet^\calX)$ is a reflection
      group.
      We say that a reflection group $(\calX,\sbullet^\calX)$
      \emph{separates reflections} 
      if the map $\sbullet^\calX$ is injective and that $\calX$ is
      \emph{proper} if it separates reflections and
      $t^\calX\neq\id$ for every $t\in T$. We say that $T$ itself is
      \emph{proper} if it has a proper reflection group.
    \end{defn}
    
    \begin{exmp}
      In the context of Example~\ref{exmp:symSys}, the group $\calX$
      can be viewed as a proper reflection group for $T$, if it is
      generated by $T^\calX$.
    \end{exmp}

    \begin{rem}\label{rem:proper}
      If $T$ is proper, then we immediately obtain $s.s=s$ and
      \begin{math}
        s.t=t \iff t.s=s
      \end{math}
      for all $s$ and $t\in T$ by considering the images in the proper
      reflection group. In that case $T$ is a discrete symmetric space
      in the sense of \cite{MR0239005}.
    \end{rem}

    \begin{rem}\label{rem:winvariance}
      If $\calX$ is a $T$-reflection group, then
      we have
      \begin{equation*}
        x.(t^\calX)=(x.t)^\calX
      \end{equation*}
      for all $x\in\calX$ and $t\in T$,
      since $T^\calX$ generates $\calX$.
      If $\calY$ is another $T$-reflection group and if 
      \label{rem:relTransit}
      $s_1,\dots,s_n$ and  $t\in T$ then
      \begin{align*}
        (
        s_1^\calX
        \cdots
        s_n^\calX
        ).t
        =
        (s_1^\calY
        \cdots
        s_n^\calY
        ).t.
      \end{align*}
      This means that the $\calX$-orbits and the $\calY$-orbits in $T$
      are the same.
    \end{rem}
    

    \begin{defn}\propname{Reflection morphism}
      Let $T$ be a symmetric system and let 
      $\calX$ and 
      $\calY$ be $T$-reflection
      groups.
      Then a group
      homomorphism $\calX\to\calY$ is called a 
      \emph{$T$-reflection morphism} or a
      \emph{$T$-morphism},
      if the following diagram is commutative: 
      \begin{lastdisplay}
        \begin{center}
          \begin{math}
            \xymatrix{
              &T&\\
              \calX
              \ar[rr]
              \ar@{<-}[ru]
              &&
              \calY.
              \ar@{<-}[lu]
            }
          \end{math}
          \vspace{-0.1cm}~
        \end{center}
      \end{lastdisplay}
    \end{defn}

      If reflection groups $\calX$ and $\calY$ are given, 
      then there is at
      most one morphism $\calX\to\calY$, since the images of a
      generating set are prescribed.
      Every reflection morphism is surjective, since it has a generating
      set in its image. If $\zeta:~\calX\to\calY$ is a
      reflection morphism, then the actions of $\calX$ and
      $\calY$ on $T$ are compatible in the following sense:
      We have
      \begin{math}
        x.t=\zeta(x).t.
      \end{math}
      for all $x\in\calX$ and $t\in T$.
      
    \begin{defn}\propname{Category of $T$-reflection groups}
      Let $T$ be a symmetric system. Then we associate to it the
      \emph{category of $T$-reflection groups} as follows:
      The objects of the category are the $T$-isomorphy classes of
      reflection groups. The morphisms in this category are given by the
      reflection morphisms.
    \end{defn}

    \begin{lem}\label{lem:inBetween}
      Suppose $\calX$ and $\calZ$ are $T$-reflection
      groups. Let $\calY$ be a group and suppose there are group
      homomorphisms
      \begin{equation*}
        \calX
        \overset{\phi}\longrightarrow
        \calY
        \overset{\psi}\longrightarrow
        \calZ,
      \end{equation*}
      such that $\phi$ is surjective and
      the composition $\psi\circ\phi$ is a $T$-reflection
      morphism. Then the map
      \begin{equation*}
        \sbullet^\calY:~
        T\to\calY,~
        t\mapsto\phi(t^\calX)
      \end{equation*}
      turns the pair $(\calY,\sbullet^\calY)$ into a $T$-reflection
      group. 
    \end{lem}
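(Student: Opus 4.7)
The strategy is to first equip $\calY$ with an action on $T$ and then verify the four axioms (G1)--(G4). For the action, I would set $y.t := \psi(y).t$, using the existing $\calZ$-action on $T$. This is manifestly a group action, being pulled back through the homomorphism $\psi$. Note that this is equivalent to the other natural candidate: since $\phi$ is surjective, one could write $y = \phi(x)$ and define $y.t := x.t$; the compatibility identity $x.t = (\psi\circ\phi)(x).t$ (recorded in the remarks just after the definition of reflection morphism, and valid because $\psi\circ\phi$ is a reflection morphism) shows both candidates agree and in particular that the $\phi$-based version is independent of the chosen preimage. This is the one place where the hypothesis that $\psi\circ\phi$ is a reflection morphism really enters.

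With the action fixed, the verifications are short. For (G1), surjectivity of $\phi$ together with the fact that $T^\calX$ generates $\calX$ gives $T^\calY = \phi(T^\calX)$ generates $\phi(\calX) = \calY$. For (G2), using $(\psi\circ\phi)(t^\calX) = t^\calZ$ and (G2) applied to $\calZ$,
\begin{equation*}
  t^\calY.s = \psi(\phi(t^\calX)).s = t^\calZ.s = t.s.
\end{equation*}
For (G3), since conjugation is preserved by group homomorphisms and (G3) holds in $\calX$,
\begin{equation*}
  t^\calY . s^\calY = \phi(t^\calX) \phi(s^\calX) \phi(t^\calX)^{-1}
  = \phi(t^\calX . s^\calX) = \phi((t.s)^\calX) = (t.s)^\calY.
\end{equation*}
Finally, (G4) follows immediately from (G4) for $\calX$: $(t^\calY)^2 = \phi((t^\calX)^2) = \phi(1) = 1$.

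There is no serious obstacle; the lemma is essentially a statement that reflection-group structure descends through a surjection, provided the compatibility furnished by $\psi$ is present on the other side. The only subtle point is the definition of the $\calY$-action, and this is exactly where the hypothesis on $\psi\circ\phi$ is used. Once the action is correctly installed, axioms (G1)--(G4) translate into statements already known for $\calX$ or $\calZ$ and become essentially formal.
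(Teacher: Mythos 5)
Your proof is correct and follows essentially the same route as the paper: the action on $T$ is pulled back through $\psi$, (G1) comes from surjectivity of $\phi$, (G2) from the fact that $\psi\circ\phi$ is a reflection morphism, and (G3)--(G4) by pushing the corresponding identities in $\calX$ through $\phi$. Your added remark that the $\phi$-based definition of the action agrees with the $\psi$-based one is a harmless elaboration the paper leaves implicit.
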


    \begin{proof}
      The homomorphism $\psi$ induces an action of $\calY$ on
      $T$. We will verify the axioms of
      Definition~\ref{defn:reflGroup}. Since $\phi$ is surjective, we
      have (G1). If $s$ and $t\in T$ then
      \begin{equation*}
        t^\calY.s
        ~=~\phi(t^\calX).s
        ~=~\psi\circ\phi(t^\calX).s
        ~=~t^\calZ.s
        ~=~t.s,
      \end{equation*}
      so (G2) is satisfied. Finally, we turn to (G3) and (G4). 
      For all $s,t\in T$
      we have
      \begin{lastdisplay}
      \begin{align*}
        (t^\calY)^2
        &=
        \big(\phi(t^\calX)\big)^2
        =
        \phi\big((t^\calX)^2\big)
        ~=~1
        \text{~~~~~and}
        \\
        t^\calY.s^\calY
        &=
        t^\calY
        s^\calY
        (t^\calY)^{-1}
        =
        \phi(
        t^\calX
        s^\calX
        (t^\calX)^{-1}
        )
        =
        \phi(t^\calX.s^\calX)
        =
        \phi\big((t.s)^\calX\big)
        =
        (t.s)^\calY
      \end{align*}
      \end{lastdisplay}
    \end{proof}

    Though quickly proved, the following Lemma is an important observation about
    $T$-morphisms.

    \begin{lem}\label{lem:centralExt}
      Every $T$-morphism 
      $\zeta:~\calX\to\calY$ is a central extension.
    \end{lem}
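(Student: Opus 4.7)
The plan is to show that $\ker(\zeta)$ lies in the center of $\calX$; since $\zeta$ is already known to be surjective (every $T$-morphism has a generating set in its image), this is all that is required for a central extension.

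First I would unpack the interaction between the action of $\calX$ on $T$ and conjugation inside $\calX$. The paragraph following the definition of a reflection morphism records that $x.t = \zeta(x).t$ for every $x \in \calX$ and $t\in T$. Consequently, for any $k \in \ker(\zeta)$ we have $k.t = \zeta(k).t = 1.t = t$, so $k$ acts trivially on all of $T$.

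Next I would invoke Remark~\ref{rem:winvariance}, which asserts the identity $x.(t^\calX) = (x.t)^\calX$ for $x\in\calX$ and $t\in T$, where the left-hand side is conjugation $x t^\calX x^{-1}$ (this interpretation being exactly (G3) extended from generators to all of $\calX$). Applied to $x = k \in \ker(\zeta)$, this gives
\begin{equation*}
  k\, t^\calX k^{-1} = (k.t)^\calX = t^\calX
\end{equation*}
for every $t \in T$, so $k$ commutes with each generator in $T^\calX$.

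Finally, since $T^\calX$ generates $\calX$ by (G1), commuting with every element of $T^\calX$ forces $k$ to lie in the center of $\calX$. Hence $\ker(\zeta) \subseteq Z(\calX)$ and $\zeta$ is a central extension. I do not anticipate any real obstacle here: the whole content is the observation that the $\calX$-action on $T$ factors through $\calY$, so kernel elements act trivially, and the action on $T$ is tautologically conjugation on the generating reflections.
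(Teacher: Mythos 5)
Your proof is correct and is essentially the same as the paper's: both use the compatibility $x.t=\zeta(x).t$ together with the identity $x.(t^\calX)=(x.t)^\calX$ from Remark~\ref{rem:winvariance} to show kernel elements centralize the generating set $T^\calX$. The only difference is presentational — you separate the computation into two steps where the paper writes it as a single chain of equalities.
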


    \begin{proof}
      Let $x\in\ker\zeta$. Then we have
      \begin{equation*}
        xs^\calX x^{-1}
        =
        x.s^\calX
        =
        (x.s)^\calX
        =
        (\zeta(x).s)^\calX
        =
        s^\calX.
      \end{equation*}
      for every $s\in T$. This proves the claim.
    \end{proof}




    \begin{defn}\propname{Initial and terminal reflection group}
      A $T$-reflection group $\calU$ 
      is called \emph{initial}
      if the following
      universal property is satisfied: For every $T$-reflection
      group $\calX$ there is a unique reflection morphism
      $\calU\to\calX$.  
      
      A $T$-reflection group $\calT$ 
      is called \emph{terminal} if the following
      universal property is satisfied: For every $T$-reflection
      group $\calX$ there is a unique reflection morphism
      $\calX\to\calT$.  
    \end{defn}

    \begin{prop}\label{prop:initTerm}
      An initial and a
      terminal $T$-reflection group exist. 
    \end{prop}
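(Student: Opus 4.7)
The plan is to construct the initial object $\calU$ explicitly by generators and relations, and to realise the terminal object as the image of the action on $T$ inside $\mathrm{Perm}(T)$.

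For the initial object, I take $\calU$ to be the group generated by symbols $r_t$, one for each $t\in T$, modulo the relations $r_t^2 = 1$ and $r_t r_s r_t^{-1} = r_{t.s}$, and set $t^\calU := r_t$. To turn $\calU$ into a $T$-reflection group I need an action of $\calU$ on $T$, which I would define on generators by $r_t.s := t.s$. The key check is that this action descends to $\calU$: the relation $r_t^2 = 1$ demands $t.(t.s) = s$, which is axiom (S1); and the conjugation relation reduces to $t.(s.(t.u)) = (t.s).u$, which follows by applying (S2) with the outer factor $t$ and then (S1). Axioms (G1) and (G4) then hold by construction, (G2) is the definition of the action, and (G3) --- which after (G4) reads $t^\calU s^\calU (t^\calU)^{-1} = (t.s)^\calU$ --- is exactly the second defining relation.

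For the universal property of $\calU$, given any $T$-reflection group $\calX$ the assignment $r_t \mapsto t^\calX$ respects the squaring relation by (G4) and the conjugation relation by (G3), and so extends to a group homomorphism $\calU \to \calX$. By construction this is a $T$-morphism, and uniqueness is the observation recorded just after the definition of the category: any reflection morphism is determined by its values on $T^\calX$.

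For the terminal object, my candidate is $\calT := \calU/K$, where $K$ is the kernel of the action of $\calU$ on $T$; equivalently, $\calT$ is the subgroup of $\mathrm{Perm}(T)$ generated by the permutations $\sigma_t\colon s \mapsto t.s$, with $t^\calT := \sigma_t$. The axioms (G1), (G2), (G4) are immediate, and (G3) is once again the identity $t.(s.(t.u)) = (t.s).u$. For the universal property, given any $T$-reflection group $\calX$, I claim the unique morphism $\zeta\colon \calU \to \calX$ from the initial property has kernel contained in $K$: if $\zeta(u) = 1$, then for every $t \in T$ we have $u.t = \zeta(u).t = t$ by compatibility of the two actions on $T$, so $u \in K$. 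Hence $\calU \to \calT$ factors through $\calX$, yielding the required $\calX \to \calT$, which is visibly a $T$-morphism and is unique by the same remark as above. The main obstacle is simply keeping the several actions on $T$ straight --- the symmetric system operation itself, its extension to an action of $\calU$, and the action of each $T$-reflection group via the initial morphism --- and the only place both axioms (S1) and (S2) are used in concert is the well-definedness of the action of $\calU$ on $T$, a check that tacitly underlies every other verification.
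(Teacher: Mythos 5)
Your proposal is correct and follows essentially the same route as the paper: the initial object is the group with the presentation $\langle (t^\calU)_{t\in T}\mid (t^\calU)^2=1,\ t^\calU s^\calU(t^\calU)^{-1}=(t.s)^\calU\rangle$, the terminal object is the permutation group on $T$ generated by the maps $s\mapsto t.s$, and in both arguments the only real content is that (S1) and (S2) make these permutations satisfy the defining relations. The only difference is cosmetic: the paper builds $\calT$ first and uses the morphism $\calU\to\calT$ to equip $\calU$ with its action, whereas you equip $\calU$ with the action directly and then obtain terminality of $\calT=\calU/K$ by factoring through the (surjective) initial morphism $\calU\to\calX$ via the kernel containment $\ker(\calU\to\calX)\subseteq K$ — an equivalent argument.
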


    \begin{proof}
      For $s\in T$ set
      \begin{equation*}
        s^\calT:~T\to T,~t\mapsto s.t
      \end{equation*}
      and note that $s^\calT$ is its own inverse and thus
      a bijection due to axiom (S1).
      Let $\calT$ be the subgroup of the symmetric group over
      $T$ 
      generated by
      $T^\calT=\{s^\calT~|~s\in T\}$. We will show that the pair
      $(\calT,\sbullet^\calT)$ is a reflection group. There is a natural
      action of $\calT$ on $T$. Axiom (G1) is
      satisfied by definition and if $s$ and $t\in T$ then 
      \begin{math}
        t^\calT.s=t.s
      \end{math}
      by definition, so axiom (G2) is satisfied. Now let $r$, $s$ and
      $t\in T$. Then
      \begin{align*}
        (t^\calT)^2.r
        &=
        t^\calT.(t^\calT.r)
        =
        t.(t.r)
        =
        r
        \text{~~~by (S1)~~~and}
        \\
        (t^\calT.s^\calT).r
        &=
        \big(t^\calT s^\calT(t^\calT)^{-1}\big).r
        =
        t.\big(s.(t.r)\big)
        =
        (t.s).\big(t.(t.r)\big)
        \text{~~~by (S2)}
        \\
        &=
        (t.s).r
        =
        (t.s)^\calT.r,
      \end{align*}
      so axioms (G3) and (G4) are satisfied. Now we show that $\calT$ is
      terminal. If $\calX$ is another reflection group then the action
      of $\calX$ on $T$ induces a group homomorphism into the
      symmetric group on $T$. Its image is in $\calT$ by (G2). This
      group homomorphism is a reflection morphism.
      
      Now consider the group $\calU$ with the presentation
      \begin{equation}      
        \label{eq:preConj}
        \calU:=
        \big\langle(t^\calU)_{t\in T}
        ~\big|~
        (t^\calU)^2=1        
        \text{~~and~~}
        t^\calU s^\calU(t^\calU)^{-1}
        =(t.s)^\calU
        \text{~~~for $s$ and $t\in T$}
        \big\rangle
      \end{equation}
      with the natural map 
      $\sbullet^\calU:~T\to\calU,~t\mapsto t^\calU$. Let $\calX$ be a
      reflection group. Since $(t^\calX)^2=1$ and
      $t^\calX.s^\calX=(t.s)^\calX$ for all $s$ and $t\in T$, there is
      a group homomorphism $\phi:~\calU\to\calX$ such that the
      following diagram commutes:
      \begin{equation*}
        \xymatrix{
          &T&\\
          \calU
          \ar[rr]^\phi
          \ar@{<-}[ru]
          &&
          \calX.
          \ar@{<-}[lu]
        }
      \end{equation*}
      This is true in particular for $\calX:=\calT$, which induces an
      action of
      $\calU$ on $T$. In this way, $\calU$ becomes a reflection group,
      and it is initial.
    \end{proof}

    \begin{exmp}\label{exmp:coxeter}
      If $(\calW,S)$ is a Coxeter system, then $\calW$ is the initial
      reflection group for the symmetric system
      \begin{math}
        T:=W.S:=\{wsw^{-1}~|~s\in S,~ w\in\calW\}
      \end{math}
      in the sense of Example~\ref{exmp:symSys}.
      (See \cite{myWeyl}~Proposition 4.2, for instance.)
    \end{exmp}
    


    \begin{lem}\label{lem:center}
      If $\calX$ is a $T$-reflection group that separates
      reflections, then its center is the kernel of the reflection
      morphism $\calX\to\calT$, where $\calT$ is the terminal
      $T$-reflection group.
    \end{lem}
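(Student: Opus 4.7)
The plan is to show the two inclusions $\ker(\calX\to\calT)\subseteq Z(\calX)$ and $Z(\calX)\subseteq\ker(\calX\to\calT)$ separately, using Lemma~\ref{lem:centralExt} for one direction and the hypothesis that $\sbullet^\calX$ is injective for the other.

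First I would set up notation: let $\zeta:\calX\to\calT$ be the unique reflection morphism guaranteed by terminality of $\calT$. Because $\calT$ was constructed in Proposition~\ref{prop:initTerm} as a subgroup of the symmetric group on $T$, an element $y\in\calT$ is trivial if and only if $y.t=t$ for all $t\in T$. Combined with the compatibility $x.t=\zeta(x).t$ noted after the definition of reflection morphism, this identifies $\ker\zeta$ with the subgroup of $\calX$ that acts trivially on $T$.

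For the inclusion $\ker\zeta\subseteq Z(\calX)$, I would just invoke Lemma~\ref{lem:centralExt}, which states that every $T$-morphism is a central extension; this is precisely the assertion that $\ker\zeta\subseteq Z(\calX)$. (Alternatively, one reproduces the argument: for $x\in\ker\zeta$ and $s\in T$, Remark~\ref{rem:winvariance} gives $x s^\calX x^{-1}=x.s^\calX=(x.s)^\calX=s^\calX$, and since $T^\calX$ generates $\calX$ by (G1), $x$ is central.)

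For the reverse inclusion $Z(\calX)\subseteq\ker\zeta$, which is where the separation hypothesis enters, I would take $x\in Z(\calX)$ and show $x$ acts trivially on $T$. For any $s\in T$, centrality gives $x s^\calX x^{-1}=s^\calX$, and by Remark~\ref{rem:winvariance} this equals $(x.s)^\calX$. Thus $(x.s)^\calX=s^\calX$, and the injectivity of $\sbullet^\calX$ forces $x.s=s$. Since this holds for every $s\in T$, we conclude $\zeta(x)=1$, i.e.\ $x\in\ker\zeta$. The only step that relies on any real hypothesis is this last one, where separation of reflections is exactly what is needed; otherwise the argument is essentially formal manipulation of the reflection-group axioms.
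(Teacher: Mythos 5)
Your proof is correct, and it reaches the conclusion by a more direct route than the paper does. The two core computations are the same in both arguments: the inclusion $\ker(\calX\to\calT)\subseteq Z(\calX)$ is exactly Lemma~\ref{lem:centralExt}, and the computation $(x.s)^\calX=x\,s^\calX x^{-1}=s^\calX$ followed by injectivity of $\sbullet^\calX$ is precisely how the paper shows that the center acts trivially on $T$. The difference is in the packaging. You exploit the concrete realization of $\calT$ inside the symmetric group on $T$ (from the proof of Proposition~\ref{prop:initTerm}) together with the compatibility $x.t=\zeta(x).t$ to identify $\ker(\calX\to\calT)$ with the kernel of the action of $\calX$ on $T$, and then prove the two inclusions directly. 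The paper instead forms the quotient $\calY=\calX/Z(\calX)$, verifies via Lemma~\ref{lem:inBetween} that it is again a $T$-reflection group, and invokes terminality of $\calT$ to conclude $\calY\cong\calT$, whence $Z(\calX)=\ker(\calX\to\calT)$. Your version is more elementary in that it never needs to check that a quotient is a reflection group or appeal to the universal property; the paper's version is more categorical and reuses machinery it has already built. Both are complete; the one small point worth making explicit in your write-up is that the reflection morphism $\calX\to\calT$ really is the homomorphism induced by the action of $\calX$ on $T$ (which holds because reflection morphisms between two given reflection groups are unique), since that is what licenses the identification of $\ker\zeta$ with the elements acting trivially.
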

    \begin{proof}
      Let $\calX$ be a $T$-reflection group that separates
      reflections. Denote its center by
      $\calZ$. Let $\calY=\calX/\calZ$ and denote by
      $\zeta:~\calX\to\calY$ the quotient morphism. Define
      $\sbullet^\calY:~T\to\calY$ as the composition of $\sbullet^\calX$
      with $\zeta$. Now we have
      \begin{math}
        (z.t)^\calX=z.t^\calX=t^\calX
      \end{math}
      for every $z\in\calZ$.
      Since $\calX$ separates
      reflections, we conclude that $\calZ$ acts trivially on $T$. 
      So there is an
      action of $\calY$ on $T$ satisfying $\zeta(x).t=x.t$ for all
      $x\in\calX$ and $t\in T$. This action turns
      $(\calY,\sbullet^\calY)$ into a reflection group and $\zeta$ into a
      reflection morphism. Since $\calX\to\calT$ is a central
      extension, there is a reflection morphism
      $\calT\to\calY$. Since $\calT$ is terminal, it is an
      isomorphism. This entails $\calZ=\ker(\calX\to\calT)$.
    \end{proof}

    Denote the initial
    $T$-reflection group by $\calU$ and the terminal reflection group
    by $\calT$. Denote the kernel of $\calU\to\calT$ by
    $\calA$. Let $\calZ$ be a subgroup of $\calA$.
    Lemmas~\ref{lem:inBetween} and
    \ref{lem:centralExt} taken together yield:

    \begin{thm}\label{thm:category}
      There is one-to-one correspondence between subgroups of the
      abelian group $\calA$ and isomorphy classes of $T$-reflection
      groups. It is given by
      \begin{equation*}
        \calZ\mapsto\calU/\calZ.
      \end{equation*}
      There is a reflection morphism 
      \begin{math}
        \calU/\calZ_1
        \to
        \calU/\calZ_2
      \end{math}
      if and only if $\calZ_1\subseteq\calZ_2$.

      In other words,
      the category of $T$-reflection groups is small and is
      isomorphic to the
      complete lattice of subgroups of $\calA$ viewed as a category in
      the usual way.
    \end{thm}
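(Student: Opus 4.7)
The plan is to exploit the two lemmas cited immediately before the statement, together with the initiality of $\calU$, to build the bijection in both directions and then to verify that it respects the morphism structure. First, I would note that, by Lemma~\ref{lem:centralExt} applied to the morphism $\calU\to\calT$, the kernel $\calA$ is central in $\calU$ and hence abelian, so talking about its subgroup lattice makes sense.

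Given a subgroup $\calZ\subseteq\calA$, I would construct the reflection group structure on $\calU/\calZ$ as follows. Since $\calZ\subseteq\ker(\calU\to\calT)$, the initial-to-terminal morphism factors through the group quotient as $\calU\xrightarrow{\phi}\calU/\calZ\xrightarrow{\psi}\calT$, with $\phi$ surjective and $\psi\circ\phi$ a $T$-reflection morphism. Lemma~\ref{lem:inBetween} then equips $\calU/\calZ$ with a canonical reflection-group structure making $\phi$ a $T$-morphism. Conversely, for any reflection group $\calX$, the unique $T$-morphism $\calU\to\calX$ (coming from initiality) is surjective; composing with the unique morphism $\calX\to\calT$ gives a morphism $\calU\to\calT$, and by uniqueness of morphisms out of the initial object this composition is the canonical $\calU\to\calT$. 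Therefore $\ker(\calU\to\calX)\subseteq\ker(\calU\to\calT)=\calA$, and $\calX\cong\calU/\ker(\calU\to\calX)$. This shows $\calZ\mapsto\calU/\calZ$ is a bijection onto the isomorphism classes.

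For the morphism statement, if $\calZ_1\subseteq\calZ_2$, I would check that the canonical group-quotient $\calU/\calZ_1\to\calU/\calZ_2$ intertwines the two maps $\sbullet^{\calU/\calZ_1}$ and $\sbullet^{\calU/\calZ_2}$ produced by Lemma~\ref{lem:inBetween} (both are obtained by pushing $\sbullet^\calU$ forward along the relevant quotients, so commutativity of the reflection-morphism triangle is immediate). For the converse, any $T$-morphism $\calU/\calZ_1\to\calU/\calZ_2$ pre-composed with the reflection morphism $\calU\to\calU/\calZ_1$ yields a $T$-morphism $\calU\to\calU/\calZ_2$; by uniqueness of morphisms out of $\calU$ this must coincide with the canonical projection, forcing $\calZ_1\subseteq\calZ_2$.

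The only genuinely delicate point is the verification that the kernel of the initial map $\calU\to\calX$ sits inside $\calA$; everything else is formal bookkeeping about central extensions, quotients, and uniqueness of morphisms from an initial object. Once the bijection and the order-preservation are both in place, the final categorical conclusion—that the category of $T$-reflection groups is small and isomorphic to the complete lattice of subgroups of $\calA$—follows with no further work.
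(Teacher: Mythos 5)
Your proposal is correct and follows exactly the route the paper intends: the paper gives no written proof, stating only that Lemmas~\ref{lem:inBetween} and \ref{lem:centralExt} ``taken together yield'' the theorem, and your argument is precisely the expansion of that remark (centrality of $\calA$ from Lemma~\ref{lem:centralExt}, the reflection structure on $\calU/\calZ$ from Lemma~\ref{lem:inBetween}, and the containment $\ker(\calU\to\calX)\subseteq\calA$ plus the morphism criterion from uniqueness of morphisms out of the initial object). No gaps.
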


    \section{Functors on categories of reflection groups}                %

    In this section we introduce the notion of a system morphism from
    one symmetric system to another. Such a morphism gives rise to a
    functor between the corresponding categories. An important
    example of such a functor is the so-called abelianization functor
    that comes from a very natural system morphism that can be defined
    on any symmetric system. The abelianization functor maps a
    reflection group to its abelianization.

    Suppose $(T,\iota_T)$ and $(S,\iota_S)$ are symmetric systems.
    \begin{defn}\propname{System morphism}
      A map 
      $\overline\sbullet:~T\to S,~t\mapsto\overline t$ 
      is called a 
      \emph{symmetric system morphism} or a
      \emph{system morphism},
      if 
      \begin{math}
        \overline{t.s}=\overline{t}.\overline s
      \end{math}
      for
      all $s$ and $t\in T$.
     \end{defn}

     \begin{defn}
      Let $\overline\sbullet:~T\to S,$ be a system morphism. 
      Let $\calX$ be a $T$-reflection group 
      and 
      let $\calP$ be an $S$-reflection group. A group
      homomorphism $\phi:~\calX\to\calP$ is called
      \emph{$\overline\sbullet$-compatible}
      if $\phi(t^\calX)={\overline t}^\calP$ for every $t\in T$,
      i.e. the following diagram commutes:
      \begin{lastdisplay}
        \begin{equation*}
          \raisebox{13mm}{
          \xymatrix{
          T
          \ar@{->}[rr]
          \ar@{->}[d]
          &&
          S
          \ar@{->}[d]
          \\
          \calX
          \ar@{->}[rr]^\phi
          &&
          \calP.
        }
        }
      \end{equation*}
    \end{lastdisplay}
    \end{defn}

    \begin{lem}\label{lem:initial}
      Let $\calU$ be the initial $T$-reflection group, and let $\calV$
      be the initial $S$-reflection group. Then there is a
      $\overline\sbullet$-compatible group homomorphism $\calU\to\calV$. 
    \end{lem}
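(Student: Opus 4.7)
The plan is to use the explicit presentation of the initial reflection group obtained in the proof of Proposition~\ref{prop:initTerm} (equation~(\ref{eq:preConj})) and exhibit $\phi$ via the universal property of that presentation. More precisely, I will prescribe the values $\phi(t^\calU):=\overline{t}^\calV$ for $t\in T$ and verify that this assignment respects the defining relations of $\calU$.

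First I would check the relation $(t^\calU)^2=1$: its image must be $(\overline{t}^\calV)^2$, which equals $1$ by axiom (G4) applied to the $S$-reflection group $\calV$. Next I would check the conjugation relation $t^\calU s^\calU (t^\calU)^{-1}=(t.s)^\calU$. Under the prescribed assignment, the left-hand side maps to
\begin{equation*}
\overline{t}^\calV\,\overline{s}^\calV\,(\overline{t}^\calV)^{-1}=\overline{t}^\calV.\overline{s}^\calV=(\overline{t}.\overline{s})^\calV,
\end{equation*}
using (G3) for $\calV$, while the right-hand side maps to $(\overline{t.s})^\calV$. These agree precisely because $\overline\sbullet$ is a system morphism, so $\overline{t.s}=\overline{t}.\overline{s}$. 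Thus all defining relations of the presentation are satisfied, and the universal property of the presentation yields a unique group homomorphism $\phi:\calU\to\calV$ extending $t^\calU\mapsto\overline{t}^\calV$.

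Finally, $\overline\sbullet$-compatibility is built into the construction: the diagram in the definition of $\overline\sbullet$-compatibility commutes on generators $t\in T$ by the very choice $\phi(t^\calU)=\overline{t}^\calV$. No obstacle is expected, since the entire argument is a straightforward application of the presentation~(\ref{eq:preConj}); the only point requiring care is to invoke the system-morphism identity $\overline{t.s}=\overline{t}.\overline{s}$ at exactly the step where the conjugation relation is checked.
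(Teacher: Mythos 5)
Your proposal is correct and follows essentially the same route as the paper: both define $\phi$ on generators by $t^\calU\mapsto\overline{t}^\calV$ and verify the two defining relations of the presentation~(\ref{eq:preConj}) using (G4), (G3) for $\calV$, and the system-morphism identity $\overline{t.s}=\overline{t}.\overline{s}$. The compatibility observation at the end is likewise the same as what the paper leaves implicit.
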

    
    \begin{proof}
      It
      suffices to show that the assignment
      \begin{equation*}
        \phi:~t^\calU\mapsto\overline t^\calV,
      \end{equation*}
      for every $t\in T$, can be extended to a group homomorphism
      $\calU\to\calV$. This follows from the fact that we
      have
        \begin{align*}
          \big(\phi(t^\calU)\big)^2
          &=
          (\overline t^\calV)^2
          ~=~
          1
          ~~~\text{and}
          \\
          \phi(t^\calU)
          \phi(s^\calU)
          \big(\phi(t^\calU)\big)^{-1}
          \big(\phi((t.s)^\calU)\big)^{-1}
          &=
          \overline{t}^\calV
          \overline s^\calV
          (\overline t^\calV)^{-1}
          (\overline{t.s}^\calV)^{-1}
          \\
          &=
          \overline{t}^\calV
          \overline s^\calV
          (\overline t^\calV)^{-1}
          ((\overline t.\overline s)^\calV)^{-1}
          \\
          &=
          1
        \end{align*}
        for all $s$ and $t\in T$.
    \end{proof}
    \begin{cor}\label{cor:existComp}
      If $\calU$ is the initial $T$-reflection group and $\calP$ is
      any $S$-reflection group, then there is a
      $\overline\sbullet$-compatible $\calU\to\calP$.
    \end{cor}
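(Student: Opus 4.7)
The plan is to factor the required homomorphism $\calU\to\calP$ through the initial $S$-reflection group $\calV$. This way we exploit Lemma~\ref{lem:initial} to handle the jump across symmetric systems, and the universal property of $\calV$ among $S$-reflection groups to reach an arbitrary target $\calP$.

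First I would invoke Lemma~\ref{lem:initial} to produce a $\overline\sbullet$-compatible homomorphism $\phi:\calU\to\calV$, so that $\phi(t^\calU)=\overline{t}^\calV$ for every $t\in T$. Second, since $\calV$ is the initial $S$-reflection group, there exists a unique $S$-reflection morphism $\psi:\calV\to\calP$, which by definition of a reflection morphism satisfies $\psi(s^\calV)=s^\calP$ for every $s\in S$.

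I would then form the composition $\psi\circ\phi:\calU\to\calP$ and check $\overline\sbullet$-compatibility by a direct calculation:
\begin{equation*}
  (\psi\circ\phi)(t^\calU)
  ~=~
  \psi\bigl(\phi(t^\calU)\bigr)
  ~=~
  \psi\bigl(\overline{t}^\calV\bigr)
  ~=~
  \overline{t}^\calP
\end{equation*}
for every $t\in T$. The first equality is the definition of composition, the second uses the $\overline\sbullet$-compatibility of $\phi$ from Lemma~\ref{lem:initial}, and the third uses that $\psi$ is an $S$-reflection morphism applied to $\overline{t}\in S$.

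There is no real obstacle here; the statement is a clean consequence of chaining the two universal constructions. The only mild care needed is to make sure that the arrow $\psi$ really sends $\overline{t}^\calV$ to $\overline{t}^\calP$ (i.e.~that being a reflection morphism gives the compatibility on all of $S$, not just on a generating subset), which is built into the definition of $T$-morphism in the paper.
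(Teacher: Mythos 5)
Your proposal is correct and follows the same route as the paper: the paper's proof also factors $\calU\to\calP$ through the initial $S$-reflection group $\calV$, obtaining $\calU\to\calV$ from Lemma~\ref{lem:initial} and $\calV\to\calP$ from initiality, and verifies compatibility by chasing the same diagram. You have merely written out explicitly the computation that the paper leaves as ``diagram chasing.''
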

    \begin{proof}
      Let $\calV$ be the initial $S$-reflection group. 
      Then this follows from diagram chasing in
      \begin{lastdisplay}
        \begin{align*}
          \xymatrix{
            &
            T
            \ar@{->}[rr]
            \ar@{->}[dl]
            &&
            S
            \ar@{->}[dr]
            \ar@{->}[dl]
            \\
            \calU
            \ar@{->}[rr]
            &&
            \calV
            \ar@{->}[rr]
            &&
            \calP.
          }
        \end{align*}
      \end{lastdisplay}
    \end{proof}

    \begin{lem}
      Let $\calX$ be the terminal $T$-reflection group, and let $\calP$
      be the terminal $S$-reflection group. Then there is a
      $\overline\sbullet$-compatible group homomorphism $\calX\to\calP$. 
    \end{lem}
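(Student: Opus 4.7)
The plan is to follow the pattern of Lemma~\ref{lem:initial} and Corollary~\ref{cor:existComp}, by passing through the initial $T$-reflection group $\calU$. By Corollary~\ref{cor:existComp} there exists a $\overline\sbullet$-compatible homomorphism $\chi\colon\calU\to\calP$. Since $\calX$ is terminal, the universal property of $\calU$ gives a (unique) surjective reflection morphism $\pi\colon\calU\to\calX$. If we can factor $\chi$ as $\chi = \phi\circ\pi$ for some group homomorphism $\phi\colon\calX\to\calP$, then automatically $\phi(t^\calX) = \phi(\pi(t^\calU)) = \chi(t^\calU) = \overline{t}^\calP$ for every $t\in T$, which is precisely the required compatibility.

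Thus the task reduces to verifying the kernel containment $\ker(\pi)\subseteq\ker(\chi)$. By the construction in Proposition~\ref{prop:initTerm}, $\calX$ is faithfully realized inside the symmetric group on $T$, so an element $u = t_1^\calU\cdots t_n^\calU$ lies in $\ker(\pi)$ exactly when the product acts as the identity on $T$. Its image $\chi(u) = \overline{t_1}^\calP\cdots\overline{t_n}^\calP$ lies in $\calP$, which is likewise a faithful group of permutations on $S$. Iterating the system-morphism identity $\overline{a.b} = \overline{a}.\overline{b}$ together with axiom (G2) for $\calP$ gives
\begin{equation*}
\chi(u).\overline{r} \;=\; \overline{(t_1^\calU\cdots t_n^\calU).r} \;=\; \overline{r}
\qquad\text{for every } r\in T,
\end{equation*}
so $\chi(u)$ already fixes the image $\overline{T}\subseteq S$ pointwise.

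The main obstacle is to promote this partial triviality to full triviality: one must show that $\chi(u)$, already fixing the image of $\overline\sbullet$, is in fact the identity on all of $S$. I would attack this using the tools just developed for the $S$-side: Lemma~\ref{lem:centralExt} tells us that $\calV\to\calP$ is a central extension, so the image of any central element of $\calU$ in $\calP$ is central; and Lemma~\ref{lem:center} identifies the kernel of the passage $\calV\to\calP$ with the center of $\calV$. Combined with the fact that $\chi(u)$ acts trivially on $\overline{T}$, one can then show $\chi(u)$ lies in the center of the subgroup of $\calP$ generated by $\{\overline{t}^\calP\mid t\in T\}$ and is forced to act trivially on the whole of $S$. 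Once $\ker(\pi)\subseteq\ker(\chi)$ is established, the desired $\phi\colon\calX\to\calP$ is produced uniquely by the universal property of quotients and the compatibility $\phi(t^\calX)=\overline{t}^\calP$ is automatic.
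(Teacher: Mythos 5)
Your reduction is sound as far as it goes: passing through the initial group via Corollary~\ref{cor:existComp}, reducing everything to the kernel containment $\ker(\pi)\subseteq\ker(\chi)$, and observing that an element of $\ker(\pi)$ is sent to an element of $\calP$ fixing $\overline T$ pointwise --- all of this is correct, and you have put your finger on exactly the point that the paper's own (very terse) proof passes over in silence. The problem is that the step you defer to the end cannot be carried out by the means you indicate, and in fact cannot be carried out at all without a further hypothesis. An element of $\calP$ that fixes $\overline T\subseteq S$ pointwise and centralizes $\{\overline t^{\,\calP}\mid t\in T\}$ need not act trivially on $S\setminus\overline T$; Lemmas~\ref{lem:centralExt} and~\ref{lem:center} concern morphisms between $S$-reflection groups and give no control over points of $S$ outside the image of $\overline\sbullet$. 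A concrete obstruction: let $S$ be the set of transpositions of the symmetric group on three letters with the conjugation multiplication of Example~\ref{exmp:symSys}, let $T=\{t\}$ be a singleton with $t.t=t$, and let $\overline t=(1\,2)$. This is a system morphism, the terminal $T$-reflection group is trivial (so $t^{\calX}=1$), yet $\overline t^{\,\calP}=(1\,2)^{\calP}$ is a nontrivial permutation of $S$ (it swaps $(1\,3)$ and $(2\,3)$) while fixing $\overline T=\{(1\,2)\}$ pointwise and generating an abelian subgroup. Hence no $\overline\sbullet$-compatible homomorphism $\calX\to\calP$ can exist in that situation, whereas your proposed centrality argument would ``prove'' that one does.

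What rescues the statement in every situation where the paper actually uses it is surjectivity of $\overline\sbullet$ (explicit in Lemma~\ref{lem:kerGen}, automatic for the abelianization morphism of Construction~\ref{constr:ab}, and forced whenever a system section exists, as from Section~4 onward). Under that hypothesis $\overline T=S$, so your observation that $\chi(u)$ fixes $\overline T$ pointwise already says that $\chi(u)$ is the identity permutation of $S$, hence trivial in $\calP\subseteq\mathrm{Sym}(S)$, and your factorization goes through with no further work. The paper's own proof takes a more direct route --- it defines the map on the generators $t^{\calX}$ of the concrete permutation group $\calX$ and extends, using the same commuting square you derive --- but it relies on exactly the same unstated surjectivity at exactly the same spot. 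So: either add the hypothesis that $\overline\sbullet$ is surjective, in which case your argument is complete and essentially equivalent to the paper's, or accept that the lemma as literally stated fails.
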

    
    \begin{proof}
      If $t$ and $r\in T$ then
      \begin{align*}
        \overline{t^\calX.r}
        &=
        \overline{t.r}
        =
        \overline{t}.\overline r
        =
        \overline{t}^\calP.\overline r,
      \end{align*}
      in other words, the following diagram commutes:
      \begin{align*}
        \xymatrix{
          T
          \ar[rr]^{t^\calX}
          \ar[d]
          &&
          T
          \ar[d]
          \\
          S
          \ar[rr]^{\overline t^\calP}
          &&
          S.
        }
        \end{align*}
        In view of how terminal reflection groups are constructed in
        the proof of Proposition~\ref{prop:initTerm},
        this means that the assignment 
        \begin{align*}
          T^\calX\to\calP,~
          t^\calX\mapsto\overline t^\calP
        \end{align*}
      can be continued to a group homomorphism $\calX\to\calP$. It is
      compatible. 
    \end{proof}
    \begin{cor}\label{cor:terminal}
      If $\calP$ is the terminal $S$-reflection group and $\calX$ is
      any $T$-reflection group, then there is a
      $\overline\sbullet$-compatible $\calX\to\calP$.
    \end{cor}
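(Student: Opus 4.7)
The plan is to mimic the structure of the proof of Corollary~\ref{cor:existComp}, but with the arrows reversed so that the intermediate object is the terminal $T$-reflection group rather than the initial $S$-reflection group.

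More precisely, let $\calY$ denote the terminal $T$-reflection group, whose existence is guaranteed by Proposition~\ref{prop:initTerm}. Since $\calY$ is terminal in the category of $T$-reflection groups, there is a (unique) $T$-reflection morphism $\calX\to\calY$, which in particular sends $t^\calX$ to $t^\calY$ for every $t\in T$. The preceding lemma furnishes a $\overline\sbullet$-compatible group homomorphism $\calY\to\calP$, sending $t^\calY$ to $\overline t^\calP$ for every $t\in T$. Composing these two homomorphisms yields $\calX\to\calP$, and following the generators $t^\calX$ through the composition gives $t^\calX\mapsto t^\calY\mapsto\overline t^\calP$, which is exactly the $\overline\sbullet$-compatibility condition. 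Visually this amounts to the diagram chase
\begin{equation*}
  \xymatrix{
    &
    T
    \ar@{->}[rr]
    \ar@{->}[dl]
    \ar@{->}[dr]
    &&
    S
    \ar@{->}[dl]
    \\
    \calX
    \ar@{->}[rr]
    &&
    \calY
    \ar@{->}[rr]
    &&
    \calP,
  }
\end{equation*}
in which the left triangle commutes because $\calX\to\calY$ is a $T$-reflection morphism and the right triangle commutes by the preceding lemma.

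No step presents any serious difficulty; the only thing to be careful about is that we are using the correct universal property at each stage, namely the terminality of $\calY$ among $T$-reflection groups to obtain $\calX\to\calY$, and the lemma immediately above (which handles the terminal-to-terminal case) to obtain $\calY\to\calP$.
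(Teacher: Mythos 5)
Your proof is correct and is exactly the argument the paper intends: the corollary is stated without proof precisely because it follows from the preceding lemma by the mirror image of the diagram chase used for Corollary~\ref{cor:existComp}, which is what you have written out. The compatibility check $t^\calX\mapsto t^\calY\mapsto\overline t^\calP$ is the right verification and uses the correct universal properties at each stage.
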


    Let $\calX$ be a $T$ reflection group. Let $M$ be the set of all
    $S$-reflection groups $(\calP,\sbullet^\calP)$ modulo equivalence 
    such that there is a compatible
    $\phi_\calP:~\calX\to\calP$.  
    (Due to Theorem~\ref{thm:category}, this is a set.) 
    This set is not empty by Corollary~\ref{cor:terminal}.
    Let $\overline\calX$ be the subgroup of the group 
    \begin{align*}
      \prod_{\calP\in\M}\calP
    \end{align*}
    generated by the elements
    \begin{align*}
      s^{\overline\calX}
      &=
      \prod_{\calP\in M}s^\calP
    \end{align*}
    with $s\in S$. 
    Then $\overline\calX,\sbullet^\calX$ is an $S$-reflection
    group. It is actually the limit of the groups in $M$. The group
    homomorphism 
    \begin{align*}
      \phi:~\calX\to\overline\calX,~
      x\mapsto
      \prod_{\calP\in M}\phi_\calP(x)
    \end{align*}
    is compatible.

    If $\zeta:~\calX\to\calY$ is a $T$-reflection morphism then we
    denote by $\overline\zeta$ the unique $S$-reflection morphism from
    $\overline\calX$ to $\overline\calY$. We have obtained

    \begin{prop}\label{prop:functor}
      If $\overline\sbullet:~T\to S$ is a system morphism, then
      the assignments $\calX\mapsto\overline\calX$ and
      $\zeta\mapsto\overline\zeta$ provide a functor from the category
      of $T$-reflection groups to the category of $S$-reflection
      groups and there is a $\overline\sbullet$-compatible group
      homomorphism $\calX\to\overline\calX$.
    \end{prop}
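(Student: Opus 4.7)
The essence is to verify that the construction preceding the proposition genuinely produces an $S$-reflection group $\overline{\calX}$, that the canonical map $\phi: \calX \to \overline{\calX}$ is $\overline{\sbullet}$-compatible, and that the assignment $\zeta \mapsto \overline{\zeta}$ respects identities and composition. All the heavy lifting is already done by the uniqueness of reflection morphisms (noted immediately after the definition of reflection morphism) and by Theorem~\ref{thm:category}, which guarantees that the class $M$ of $S$-reflection groups admitting a compatible homomorphism from $\calX$ is genuinely a set.

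First I would endow $\overline{\calX}$ with an action on $S$ by projecting onto any single coordinate $\calP \in M$ and letting $\calP$ act on $S$. This is unambiguous because every generator $s^\calP$ of any $S$-reflection group acts on $S$ by the fixed rule $s^\calP.r = s.r$, so by Remark~\ref{rem:winvariance} the induced action on $S$ is independent of the choice of $\calP$. Axiom (G1) holds by construction, while (G2)--(G4) are inherited coordinate-wise from each $\calP$; for instance, $(s^{\overline{\calX}})^2 = \prod_\calP (s^\calP)^2 = 1$ and $t^{\overline{\calX}} s^{\overline{\calX}} (t^{\overline{\calX}})^{-1} = \prod_\calP (t.s)^\calP = (t.s)^{\overline{\calX}}$. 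Compatibility of $\phi$ is then tautological: $\phi(t^\calX) = \prod_\calP \phi_\calP(t^\calX) = \prod_\calP \overline{t}^\calP = \overline{t}^{\overline{\calX}}$.

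For functoriality, given a $T$-morphism $\zeta: \calX \to \calY$, the composition of $\zeta$ with the compatible homomorphism $\calY \to \overline{\calY}$ is a compatible homomorphism $\calX \to \overline{\calY}$, so $\overline{\calY}$ belongs to the set $M$ associated to $\calX$. Projection of $\overline{\calX}$ onto that coordinate yields a group homomorphism $\overline{\zeta}: \overline{\calX} \to \overline{\calY}$ which sends each generator $s^{\overline{\calX}}$ to $s^{\overline{\calY}}$ and is therefore a (surjective) $S$-reflection morphism. Because reflection morphisms between fixed reflection groups are unique, $\overline{\id_\calX}$ and $\id_{\overline{\calX}}$ necessarily coincide, and similarly $\overline{\zeta_2 \circ \zeta_1} = \overline{\zeta_2} \circ \overline{\zeta_1}$, yielding functoriality at no further cost.

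The only step that requires any thought is giving $\overline{\calX}$ an unambiguous action on $S$; once Remark~\ref{rem:winvariance} is invoked this reduces to a one-line check, after which the remainder is formal bookkeeping with the coordinate-wise structure of the product.
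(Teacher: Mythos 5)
Your proposal is correct and takes essentially the same route as the paper: the paper's ``proof'' is precisely the construction of $\overline\calX$ inside the product over the set $M$ given immediately before the proposition, and your verification of (G1)--(G4) via coordinate-wise inheritance, of compatibility of $\phi$, and of functoriality via the uniqueness of reflection morphisms supplies exactly the details the paper leaves implicit (including the existence of $\overline\zeta$, which the paper only asserts).
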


    \begin{rem}
      By Lemma~\ref{lem:initial}, the functor maps initial reflection
      groups to initial reflection groups.
    \end{rem}

    \begin{lem}\label{lem:kerGen}
      If ~$\overline\sbullet:~T\to S$ is a surjective system morphism,
      then the kernel of $\calX\to\overline\calX$ is 
      the subgroup of $\calX$ generated by
      \begin{align*}
        M:=\{s^\calX t^\calX
        ~|~
      s,t\in T
      \text{~with~}\overline s=\overline t\}.
      \end{align*}
    \end{lem}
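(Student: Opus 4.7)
The plan is to prove both inclusions. Write $\phi$ for the canonical homomorphism $\calX\to\overline\calX$ and $N$ for the subgroup generated by $M$. The containment $N\subseteq\ker\phi$ is immediate: for $s,t\in T$ with $\overline s=\overline t$ we have $\phi(s^\calX t^\calX)=\overline s^{\overline\calX}\overline t^{\overline\calX}=(\overline s^{\overline\calX})^2=1$ by axiom (G4). For the reverse inclusion I would exhibit $\calY:=\calX/N$ as an $S$-reflection group equipped with a $\overline\sbullet$-compatible homomorphism from $\calX$. Once this is done, $\calY$ belongs to the family of $S$-reflection groups inside whose product $\overline\calX$ sits; the projection onto the $\calY$-factor composed with $\phi$ then recovers the quotient map $\calX\to\calY$, whence $\ker\phi\subseteq N$.

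First I would check that $N$ is normal in $\calX$. Since $\calX=\langle T^\calX\rangle$, it suffices to conjugate a generator $s^\calX t^\calX$ by $r^\calX$ for $r\in T$; axiom (G3) yields $(r.s)^\calX(r.t)^\calX$, and this lies in $M$ because $\overline{r.s}=\overline r.\overline s=\overline r.\overline t=\overline{r.t}$ since $\overline\sbullet$ is a system morphism. Next, define $\sbullet^\calY: S\to\calY$ by $s\mapsto t^\calX N$ for any $t$ with $\overline t=s$: surjectivity of $\overline\sbullet$ provides the lift, and the very definition of $M$ guarantees well-definedness. To produce an action of $\calY$ on $S$ I would set $[x].s:=\overline{x.t}$ for any such lift $t$. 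That $\overline{x.t}$ depends on $t$ only through $\overline t$ follows by induction on the length of a word representing $x$ in the generators $r^\calX$, using iteratively that $\overline{r.u}=\overline r.\overline u$. That $\overline{x.t}$ depends on $x$ only modulo $N$ follows because every element of $N$ preserves each fibre of $\overline\sbullet$: this is witnessed on generators by $\overline{(s^\calX t^\calX).u}=\overline s.(\overline t.\overline u)=\overline s.(\overline s.\overline u)=\overline u$ via axiom (S1), and the set of $x\in\calX$ enjoying this property is manifestly a subgroup. The axioms (G1)--(G4) for $\calY$ with the newly defined $\sbullet^\calY$ are then routine translations of those for $\calX$, and the projection $\calX\to\calY$ is $\overline\sbullet$-compatible by construction.

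I expect the main obstacle to be the double well-definedness of the $\calY$-action on $S$: the $\calX$-action on $T$ does not directly descend, and the argument must simultaneously handle varying lifts $t\in T$ of $s\in S$ and varying coset representatives of $[x]\in\calY$. This is precisely the point at which the system-morphism identity $\overline{r.s}=\overline r.\overline s$ and axiom (S1) interlock with the definition of $M$ to make the construction succeed.
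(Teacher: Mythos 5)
Your proof is correct and follows essentially the same route as the paper: both pass to the quotient $\calX/N$, equip it with the map $\overline t\mapsto t^\calX N$ and a descended action on $S$ to make it an $S$-reflection group with a $\overline\sbullet$-compatible homomorphism from $\calX$, and then conclude $\ker\phi\subseteq N$ from the construction of $\overline\calX$ as a subgroup of the product of all such targets. The only cosmetic difference is that the paper obtains the $\calX$-action on $S$ by citing Corollary~\ref{cor:terminal} and merely notes that $N$ acts trivially, whereas you rebuild that action by hand and verify the double well-definedness explicitly.
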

    \begin{proof}
      Set $\calK:=\ker(\calX\to\overline\calX)$ and let $\calN$ be the
      subgroup of $\calX$ generated by $M$. Since $M\subseteq\calK$ we
      have $\calN\subseteq\calK$. Note that
      $M$ is invariant under conjugation by elements in $\calX$, so
      $\calN$ is a normal subgroup. 
      Let $\calP:=\calX/\calN$ and let
      $\phi:~\calX\to\calP$ be the quotient morphism. Since 
      $\phi(s^\calX)=\phi(t^\calX)$ for $s$ and $t\in T$ 
      with $\overline s=\overline t$ and since $\overline\sbullet$ is
      surjective the assignment $\overline t\mapsto\phi(t^\calX)$
      provides a function $\sbullet^\calP:~S\to\calP$. 

      Due to Corollary~\ref{cor:terminal} there is an action of
      $\calX$ on $S$. Since $\calN$ acts trivially on $S$, we can
      define an action of $\calP$ on
      $S$ via
      \begin{math}
        \phi(x).s=x.s
      \end{math}
      for $x\in\calX$ and $s\in S$.
      It is a standard computation to verify that $(\calP,\sbullet^\calP)$
      becomes an $S$-reflection group with this action and that $\phi$
      is $\overline\sbullet$-compatible. So there is an $S$-reflection
      morphism $\overline\calX\to\calP$, which means
      $\calK\subseteq\calN$. 
    \end{proof}

    \begin{exmp}
      Let $T$ be a symmetric system and let $S$ be a singleton with
      the trivial multiplication. The only possible map $f:~T\to S$ is
      a system morphism. 
      If $\calU$ is the initial $T$ reflection group, then there
      is a group homomorphism $\det:~\calU\to\{1,-1\}$ sending the
      generators $T^\calU$ to -1.
      Because of this property, we call the functor discussed in this
      example the \emph{determinant functor}.
    \end{exmp}
    
    The functor constructed in the following plays an important role
    in the last section, when our main result is stated.

    \begin{constr}\propname{The abelianization functor}\label{constr:ab}
      Let $T$ be a symmetric system with terminal reflection group
      $\calT$. We denote the orbit space
      by $T^\ab$ and the orbit map by 
      $\sbullet^\ab:~T\to T^\ab,~t\mapsto t^\ab$. 
      With the trivial multiplication the set $T^\ab$ becomes
      a symmetric system 
      and $\sbullet^\ab$ is a system morphism due to
      \begin{align*}
        (s.t)^\ab
        &=
        (s^\calT.t)^\ab
        =
        t^\ab
        =
        s^\ab.t^\ab
      \end{align*}
      for all $s$ and $t\in T$. So, by Proposition~\ref{prop:functor},
      we have a functor $\ab$ from the category of $T$-reflection
      groups to the category of $T^\ab$-reflection groups.

      Let $\calY$ be a $T$-reflection group.
      If $s$ and $t\in T$ then the
      commutator of the generators $s^\calY$ and $t^\calY$ is given by 
      \begin{align*}
        [s^\calY,t^\calY]
        =
        s^\calY
        t^\calY
        (s^\calY)^{-1}
        (t^\calY)^{-1}
        =
        (s^\calY.t^\calY) t^\calY
        =
        (s.t)^\calY t^\calY
      \end{align*}
      So, in view of Lemma~\ref{lem:kerGen} 
      the image $\calY^\ab$ of $\calY$ under the $ab$-functor
      is precisely the abelianization of $\calY$. 
      The initial $T^\ab$-reflection group $\calV$ has the presentation
      \begin{align*}
        \langle (s^\calV)_{s\in T^\ab}
          ~|~
          (s^\calV)^2=1,~
          s^\calV t^\calV
          =
          t^\calV s^\calV
          \text{~~~for $s$ and $t\in T^\ab$}
            \rangle.
      \end{align*}
      So it is isomorphic to $\Z_2^{T^\ab}$ and it is a proper
      reflection group.
    \end{constr}
    \begin{cor}
      If $\calU$ is the initial $T$-reflection group, then 
      $T^\calU$ is a minimal $\calU$-invariant generating set for it. 
    \end{cor}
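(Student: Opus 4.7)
The plan is to first handle the easy half, namely that $T^\calU$ is a $\calU$-invariant generating set. Generation is just axiom (G1) in Definition~\ref{defn:reflGroup}, and $\calU$-invariance is Remark~\ref{rem:winvariance}, which says $x.(t^\calU)=(x.t)^\calU \in T^\calU$ for every $x\in\calU$ and $t\in T$. So the content of the corollary is the minimality claim: no proper $\calU$-invariant subset $M \subsetneq T^\calU$ can generate $\calU$.

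The natural tool is the abelianization functor from Construction~\ref{constr:ab}. By the remark following Proposition~\ref{prop:functor}, the abelianization functor sends the initial $T$-reflection group $\calU$ to the initial $T^\ab$-reflection group, which by Construction~\ref{constr:ab} is isomorphic to $\Z_2^{T^\ab}$, with the canonical basis indexed by $T^\ab$. So $\calU^\ab \cong \Z_2^{T^\ab}$ and the image of $t^\calU$ in $\calU^\ab$ is the basis element corresponding to $t^\ab \in T^\ab$.

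Next I want to identify the $\calU$-orbits in $T^\calU$ with the elements of $T^\ab$. The $\calU$-action on $T^\calU$ factors through its action on $T$ via $t \mapsto t^\calU$ because of the identity $x.(t^\calU) = (x.t)^\calU$. By Remark~\ref{rem:winvariance}, the $\calU$-orbits in $T$ coincide with the $\calT$-orbits, and $T^\ab$ is by definition the $\calT$-orbit space. Consequently, $\calU$-invariant subsets of $T^\calU$ are precisely the preimages in $T^\calU$ of subsets of $T^\ab$; in particular a proper $\calU$-invariant subset $M \subsetneq T^\calU$ misses some orbit, i.e.\ some $\bar t \in T^\ab$ has no preimage in $M$.

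Now I can conclude. Under the composition $\calU \to \calU^\ab \cong \Z_2^{T^\ab}$, the image of $M$ is contained in the span of the basis elements indexed by the orbits that meet $M$, which is a proper subset of $T^\ab$. Hence $\langle M \rangle$ maps to a proper subgroup of $\calU^\ab$, so $M$ cannot generate $\calU$. The only real step that requires care is the identification of $\calU$-orbits in $T^\calU$ with elements of $T^\ab$; once Remark~\ref{rem:winvariance} is combined with the definition of $T^\ab$ this is immediate, so I do not anticipate a serious obstacle.
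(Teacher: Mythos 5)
Your proof is correct and follows exactly the route the paper intends: the corollary is stated without proof as an immediate consequence of Construction~\ref{constr:ab}, and your argument---push a proper $\calU$-invariant subset of $T^\calU$ forward to $\calU^\ab\cong\Z_2^{T^\ab}$ and observe it misses a basis vector---is precisely the implicit justification. The one step you flag as needing care (identifying $\calU$-orbits in $T^\calU$ with elements of $T^\ab$ via Remark~\ref{rem:winvariance}) is handled correctly.
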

    \begin{defn}\propname{The abelianization functor}
      The functor discussed in Construction~\ref{constr:ab} 
      is called the
      \emph{abelianization functor}.
    \end{defn}
    \begin{rem}
      If $~\overline\sbullet:~T\to S$ is a surjective system morphism, 
      then there is
      a unique system morphism $T^\ab\to S^\ab$ such that the
      following diagram commutes:
      \begin{lastdisplay}
        \begin{center}
          \begin{math}
            \xymatrix{
              T
              \ar[rr]
              \ar[d]
              &&
              T^\ab
              \ar[d]
              \\
              S
              \ar[rr]
              &&
              S^\ab.
            }
          \end{math}          
          \vspace{-0.1cm}~
        \end{center}
      \end{lastdisplay}
    \end{rem}

    Let $\calX$, $\calY$, $\calP$ and $\calQ$ be groups and suppose
    we have the commutative diagram
    \begin{equation}\label{eq:preExact}
      \xymatrix{
        \calX
          \ar[rr]
          \ar[d]
          &&
          \calY
          \ar[d]
          \\
          \calP
          \ar[rr]
          &&
          \calQ,
        }
      \end{equation}
      where all the arrows represent surjective group homomorphisms.
      We will use the notation 
      \begin{align}\label{eq:kNotation}
        \calK^\calY_\calQ=\ker(\calY\to\calQ)
      \end{align}
      and omit the subscript, if no confusion arises. The following
      lemma about short exact sequences will be needed later on:

      \begin{lem}\label{lem:exact}
        Diagram (\ref{eq:preExact}) can be extended to the following
        diagram with three vertical and three horizontal short exact sequences:
      \begin{equation*}
        \xymatrix{
          &
          1
          \ar[d]
          &&
          1
          \ar[d]
          &&
          1
          \ar[d]
          \\
          1
          \ar[r]
          &
          \ker(
          \calK^\calX_\calP
          \to
          \calK^\calY_\calQ
          )
          \ar[rr]
          \ar[d]
          &&
          \calK^\calX_\calP
          \ar[rr]
          \ar[d]
          &&
          \calK^\calY_\calQ
          \ar[r]
          \ar[d]
          &
          1
          \\
          1
          \ar[r]
          &
          \ker(\calX\to\calY)
          \ar[rr]
          \ar[d]
          &&
          \calX
          \ar[rr]
          \ar[d]
          &&
          \calY
          \ar[r]
          \ar[d]
          &
          1
          \\
          1
          \ar[r]
          &
          \ker(\calP\to\calQ)
          \ar[rr]
          \ar[d]
          &&
          \calP
          \ar[rr]
          \ar[d]
          &&
          \calQ
          \ar[r]
          \ar[d]
          &
          1
          \\
          &
          1
          &&
          1
          &&
          1.
        }
      \end{equation*}
    \end{lem}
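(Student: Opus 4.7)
The plan is to construct the missing top row and left column of the diagram, identify the top-left corner as a shared kernel, and then verify short exactness of all six sequences cell by cell. The middle and bottom rows and the middle and right columns are short exact as soon as one invokes the definition of the kernel together with the surjectivity of the four arrows in (\ref{eq:preExact}), so these cost nothing.

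First I construct the horizontal arrow $\calK^\calX_\calP \to \calK^\calY_\calQ$: the composition $\calK^\calX_\calP \hookrightarrow \calX \to \calY \to \calQ$ agrees by commutativity with the composition routed through $\calP$, but the image of $\calK^\calX_\calP$ in $\calP$ is trivial, so the image in $\calY$ lies in $\calK^\calY_\calQ$. The vertical arrow $\ker(\calX\to\calY)\to\ker(\calP\to\calQ)$ is obtained by the symmetric argument. Unwinding definitions shows
$\ker\bigl(\calK^\calX_\calP \to \calK^\calY_\calQ\bigr) = \ker(\calX\to\calY) \cap \ker(\calX\to\calP)$,
and the same intersection coincides with $\ker\bigl(\ker(\calX\to\calY) \to \ker(\calP\to\calQ)\bigr)$, so the top-left corner is simultaneously the left-hand kernel of the top row and the upper kernel of the left column, giving exactness at that corner in both directions.

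The nontrivial step is the surjectivity of $\calK^\calX_\calP \to \calK^\calY_\calQ$ (and, by symmetry, of $\ker(\calX\to\calY)\to\ker(\calP\to\calQ)$). Given $y \in \calK^\calY_\calQ$, I would lift it to some $x \in \calX$ using surjectivity of $\calX\to\calY$; by commutativity the image of $x$ in $\calP$ lies in $\ker(\calP\to\calQ)$. To produce a preimage of $y$ in $\calK^\calX_\calP$ one must correct $x$ by an element of $\ker(\calX\to\calY)$ whose image in $\calP$ matches that of $x$, and this is precisely the surjectivity claim for the left column. Running the symmetric argument shows that the two surjectivities are equivalent.

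The common content of both is the identity $\calX \big/ \bigl(\ker(\calX\to\calY)\cdot\ker(\calX\to\calP)\bigr) = \calQ$, i.e.\ that $\calQ$ is the pushout of $\calY \leftarrow \calX \to \calP$. This is where the real work lies, and I expect it to be the main obstacle; once it is verified (either as an extra condition implicit in the statement, or from the concrete structure of the diagrams to which the lemma will be applied), the surjectivities drop out immediately via the lift-and-correct argument above, and assembling the six sequences yields the claimed $3\times 3$ diagram.
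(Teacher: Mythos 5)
Your reduction is correct, and the obstacle you isolate at the end is real: the lemma is false as stated. Take $\calX=\Z$, $\calY=\calP=\Z/2\Z$ with both maps out of $\calX$ the canonical projection, and $\calQ=\{1\}$. All four arrows are surjective and the square commutes, yet $\calK^\calX_\calP=2\Z$ maps to $\calK^\calY_\calQ=\Z/2\Z$ by the zero map, so neither the top row nor the left column is short exact. Your diagnosis is exactly right: each of the two missing surjectivities is equivalent to the identity $\ker(\calX\to\calQ)=\ker(\calX\to\calY)\cdot\ker(\calX\to\calP)$ (the product is a subgroup since both factors are normal), i.e.\ to $\calQ$ being the pushout of $\calY\leftarrow\calX\to\calP$, and this does not follow from commutativity and surjectivity alone. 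Once that identity is granted, your lift-and-correct argument and your identification of the top-left corner as $\ker(\calX\to\calY)\cap\ker(\calX\to\calP)$ complete the proof.

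The paper's own proof does not close this gap: it is a one-line citation to Bourbaki for the claim that the left column is part of a longer exact sequence, and no such general result can deliver the surjectivity of $\ker(\calX\to\calY)\to\ker(\calP\to\calQ)$ from the stated hypotheses, as the example above shows. The statement should carry the extra hypothesis you identified. It is worth recording that in each of the three places the lemma is invoked later in the paper, $\ker(\calY\to\calQ)$ is by construction the image of $\ker(\calX\to\calP)$ under the surjection $\calX\to\calY$ (for instance $\calA'$ is the image of $\calB'$, and $(\calK^\calW)'$ is the image of $(\calK^\calU)'$), which forces $\ker(\calX\to\calQ)=\ker(\calX\to\calP)\cdot\ker(\calX\to\calY)$; so the applications are sound, and your argument, supplemented by this observation, proves the lemma in every case where it is actually used.
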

    \begin{proof}
      By
      \cite{bou_algebra}~{\sc Ch~i}~\S~1.4~Proposition~2 the sequence
      \begin{align*}
        1
        \to
        \ker(\calK^\calX_\calP\to\calK^\calY_\calQ)
        \to
        \ker(\calX\to\calY)
        \to
        \ker(\calP\to\calQ)
        \to
        1
      \end{align*}
      is part of a longer exact sequence.
      Although that proposition is stated for abelian groups,
      the proof works for arbitrary groups, as well.
    \end{proof}

    Let $S$ and $T$ be symmetric systems and let
    $\overline\sbullet:~T\to S$ be a system morphism.
    \begin{defn}\propname{System section}
      A system morphism
      $\dot\sbullet:~S\to T$ 
      is called
      a \emph{system section (for $\overline\sbullet$)}, 
      if it is a right inverse of 
      $\overline\sbullet$, (i.e. $\overline{\dot s}=s$ for all $s\in
      S$). If such a system section is given and $t\in T$, we will
      sometimes write
      $\dot t$ instead of $\dot{\overline t}$ to keep notation
      simple. 
    \end{defn}

    \begin{rem}\label{rem:section}
      Suppose $\dot\sbullet:S\to T$ is a system section for
      $\overline\sbullet:T\to S$.  Let $\calU$ be the initial
      $T$-reflection group and let $\calV$ be the initial
      $S$-reflection group. If there is a $\overline\sbullet$-compatible
      group homomorphism $\calX\to\calV$, then the diagram
      \begin{align*}
        \xymatrix{ & S \ar[rr]^{\dot\sbullet} \ar[dl] && T
          \ar[rr]^{\overline\sbullet} \ar[dl] \ar[dr] && S \ar[dr]
          \\
          \calV \ar[rr] && \calU \ar[rr] && \calX \ar[rr] && \calV }
      \end{align*}
      shows that there is a homomorphic section $\calV\to\calX$ for
      $\calX\to\calV$. With the notation established in
      (\ref{eq:kNotation}) 
      this means $\calX\cong\calK^\calX_\calV\rtimes\calV$.
      We will set
      \begin{align*}
        t^{\calK^\calX_\calV}:=t^\calX\dot t^\calX
        \in\calK^\calX_\calV.
      \end{align*}
      In this way we have
      \begin{math}
        t^\calX\cong(t^{\calK^\calX_\calV},\overline t^\calV).
      \end{math}
    \end{rem}

    \section{Symmetric systems extended by an abelian group}             %

    For the entire section let $S$ and $T$ be symmetric systems and
    let $\overline\sbullet:~T\to S$ be a system morphism with a system section
    $\dot\sbullet$. Denote the initial $T$-reflection group by $\calU$ and
    the initial $S$-reflection group by $\calV$.    
    Suppose that the terminal $T$ reflection group $\calA$ satisfies 
    $\overline\calA=\calV$
    and suppose that $\calK:=\ker(\calA\to\calV)$ is 
    abelian. In other words $\calA$ is 
    an extension of the $S$-reflection group $\calV$ by the abelian
    group $\calK$. We will write the group $\calK$ additively.

    Note that the hypotheses imply that
    $\overline\calX=\calV$ for any $T$-reflection group $\calX$. 
    The group homomorphism
    $\calX\to\calV$ has a section 
    $\calV\to\calA$ by Remark~\ref{rem:section}. So $\calX$ is
    isomorphic to the semidirect product $\calK^\calX\rtimes\calV$.

    Let $\calX$ be a $T$-reflection group.
    Set
    \begin{equation*}
      \calX_\ab
      :=
      \calX/(\calK^\calX)'
      ~~~\text{and}~~~ 
      \sbullet^{\calX_\ab}:~
      T\to\calX_\ab,~
      t\mapsto t^{\calX_\ab}=q(t^\calX),
    \end{equation*}
    where $q:~\calX\to\calX_\ab$ denotes the quotient map.
    \begin{lem}
      The pair $(\calX_\ab,\sbullet^{\calX_\ab})$ is a $T$-reflection group.
    \end{lem}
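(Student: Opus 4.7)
The plan is to apply Lemma~\ref{lem:inBetween} with $\calZ:=\calA$, so that the verification of axioms (G1)--(G4) is reduced to producing a factorization of the unique $T$-reflection morphism $\pi:\calX\to\calA$ through the quotient map $q:\calX\to\calX_\ab$.

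First I would check that $\calX_\ab$ makes sense as a group: the subgroup $\calK^\calX=\ker(\calX\to\calV)$ is normal in $\calX$, so its commutator subgroup $(\calK^\calX)'$ is characteristic in $\calK^\calX$ and therefore normal in $\calX$. Hence $q:\calX\to\calX_\ab$ is a well-defined surjective group homomorphism.

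The main step is to show that $(\calK^\calX)'\subseteq\ker\pi$. Let $\rho:\calX\to\calV$ and $\sigma:\calA\to\calV$ be the $\overline\sbullet$-compatible homomorphisms, so that $\calK^\calX=\ker\rho$ and $\calK=\ker\sigma$. Both $\sigma\circ\pi$ and $\rho$ send $t^\calX\mapsto\overline t^\calV$ and thus agree on a generating set, so $\sigma\circ\pi=\rho$. Consequently $\pi(\calK^\calX)\subseteq\calK$, and since $\calK$ is abelian by hypothesis, $\pi((\calK^\calX)')\subseteq\calK'=\{1\}$. This yields a factorization $\pi=\psi\circ q$ for some $\psi:\calX_\ab\to\calA$.

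Finally, since $q$ is surjective and $\psi\circ q=\pi$ is a $T$-reflection morphism, Lemma~\ref{lem:inBetween} applied to $\calX\overset{q}{\to}\calX_\ab\overset{\psi}{\to}\calA$ shows that $(\calX_\ab,\sbullet^{\calX_\ab})$ with $t\mapsto q(t^\calX)$ is a $T$-reflection group. There is essentially no obstacle in this argument; the only point requiring care is the observation that the abelianness of $\calK$ is precisely what makes $(\calK^\calX)'$ die in $\calA$, which is what triggers Lemma~\ref{lem:inBetween}.
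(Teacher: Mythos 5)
Your proof is correct and follows essentially the same route as the paper: both arguments reduce to showing that $(\calK^\calX)'$ is killed by the reflection morphism $\calX\to\calA$ because $\calK^\calA=\calK$ is abelian, and then invoke Lemma~\ref{lem:inBetween} on the resulting factorization $\calX\to\calX_\ab\to\calA$. The only cosmetic difference is that the paper packages the inclusion $\pi(\calK^\calX)\subseteq\calK$ via the diagram of Lemma~\ref{lem:exact}, whereas you verify it directly by comparing the compatible maps to $\calV$ on generators.
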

    \begin{proof}
      By Lemma~\ref{lem:exact}  
      we have the following commutative diagram:
      \begin{equation*}
        \xymatrix{
          \ker(\psi)
          \ar[rr]
          \ar[d]_\id
          &&
          \calK^\calX
          \ar[rr]
          \ar[d]
          &&
          \calK^\calA
          \ar[d]
          \\
          \ker(\psi)
          \ar[rr]
          &&
          \calX
          \ar[rr]_\psi
          &&
          \calA
        }
      \end{equation*}
      Since $\calK^\calA$ is abelian, the kernel $\ker\psi$ contains
      the commutator subgroup $(\calK^\calX)'$. So $\psi$ factors through
      $\calX_\ab$ yielding group homomorphisms
      \begin{equation*}
        \calX
        \longrightarrow
        \calX_\ab
        \longrightarrow
        \calA.
      \end{equation*}
      We are done by Lemma~\ref{lem:inBetween}. 
    \end{proof}

    Let $\calB$ be a $T$-reflection group such that $\calB=\calB_\ab$,
    i.e. such that
    $\calK^\calB:=\calK_\calV^\calB$ is abelian. We have the following
    commuting diagram:
      \begin{equation*}
        \xymatrix{
          \ker(\calB\to\calA)
          \ar[rr]
          \ar[d]_\phi
          &&
          \calB
          \ar[rr]
          \ar[d]
          &&
          \calA
          \ar[d]
          \\
          \ker(\calB^\ab\to\calA^\ab)
          \ar[rr]
          &&
          \calB^\ab
          \ar[rr]
          &&
          \calA^\ab
        }
      \end{equation*}

    \begin{prop}\label{prop:abKerIso}
      If $\calK$ is free abelian, then
      $\phi$ is an isomorphism.
    \end{prop}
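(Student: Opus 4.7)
The plan is to use Lemma~\ref{lem:exact} twice to reduce the statement to a concrete algebraic claim inside $\calK^\calB$, and then to exploit the centrality of $\ker(\calB\to\calA)$ together with the freeness of $\calK$.

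First, apply Lemma~\ref{lem:exact} to the commutative square formed by $\calB\to\calA$ above and $\calB^\ab\to\calA^\ab$ below, with the vertical arrows being the abelianization quotients. The left column of the resulting $3\times 3$ diagram is the short exact sequence
$$1 \to \ker\!\big([\calB,\calB]\to[\calA,\calA]\big) \to \ker(\calB\to\calA) \xrightarrow{\phi} \ker(\calB^\ab\to\calA^\ab) \to 1,$$
so $\phi$ is automatically surjective, and the task is to show $[\calB,\calB]\to[\calA,\calA]$ is injective.

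Next, use Remark~\ref{rem:section} to write $\calB\cong\calK^\calB\rtimes\calV$ and $\calA\cong\calK\rtimes\calV$. Since the kernels are abelian, a direct semidirect-product computation gives $[\calB,\calB]\cap\calK^\calB = [\calV,\calK^\calB] = \mathcal{I}\cdot\calK^\calB$, where $\mathcal{I}\subseteq\Z[\calV]$ is the augmentation ideal, and likewise $[\calA,\calA]\cap\calK = \mathcal{I}\cdot\calK$. A second application of Lemma~\ref{lem:exact} to the square $[\calB,\calB]\to[\calA,\calA]$ above $[\calV,\calV]=[\calV,\calV]$ below (identity on the bottom) identifies $\ker([\calB,\calB]\to[\calA,\calA])$ with $\ker(f)\cap\mathcal{I}\cdot\calK^\calB$, where $f:\calK^\calB\to\calK$ is the $\calV$-equivariant surjection induced by $\calB\to\calA$. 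So it remains to prove $\ker(f)\cap\mathcal{I}\cdot\calK^\calB = 0$.

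Now Lemma~\ref{lem:centralExt} makes $\calB\to\calA$ a central extension, so $\ker(f)=\ker(\calB\to\calA)$ is central in $\calB$ and $\calV$ acts trivially on $\ker(f)$. Since $\calK$ is free abelian, fix a $\Z$-linear section $\sigma:\calK\to\calK^\calB$ of $f$, giving an abelian-group splitting $\calK^\calB = \ker(f)\oplus\sigma(\calK)$; let $c(v,k):=v.\sigma(k)-\sigma(v.k)\in\ker(f)$ be the cocycle measuring the failure of $\sigma$ to be $\calV$-equivariant. Using $(v-1).x=0$ for $x\in\ker(f)$ (centrality), a general element of $\mathcal{I}\cdot\calK^\calB$ decomposes as
$$\sum_i (v_i-1).(x_i+\sigma(k_i)) = \Bigl(\sum_i c(v_i,k_i)\Bigr) + \sigma\!\Bigl(\sum_i(v_i-1).k_i\Bigr),$$
so lying in $\ker(f)$ forces $\sum_i(v_i-1).k_i=0$ in $\calK$, and the residual $\ker(f)$-component is the image of the corresponding 1-cycle under the connecting homomorphism $\partial:H_1(\calV,\calK)\to\ker(f)$ of the long exact homology sequence of $0\to\ker(f)\to\calK^\calB\to\calK\to 0$. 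The proof concludes once $\partial$ is shown to vanish.

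The main obstacle is precisely this vanishing of $\partial$, equivalently showing that $c$ is a coboundary so that $\sigma$ can be adjusted to be $\calV$-equivariant. Freeness of $\calK$ lets one work on a $\Z$-basis of $\calK$, reducing the cocycle data to its values on basis vectors; the reflection-group relations in $\calB$, in particular $\overline t.t^{\calK^\calB} = -t^{\calK^\calB}$ coming from $(t^\calB)^2=1$ together with the identification $f(t^{\calK^\calB})=t^{\calK^\calA}$ on generators, are then used together with centrality of $\ker(f)$ to cancel the cocycle contributions basis element by basis element.
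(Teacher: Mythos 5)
Your reductions are sound: the first application of Lemma~\ref{lem:exact} does show that $\phi$ is surjective with kernel $\ker([\calB,\calB]\to[\calA,\calA])$, the identification $[\calB,\calB]\cap\calK^\calB=\mathcal{I}\cdot\calK^\calB$ is correct for a semidirect product with abelian kernel, and the second application of Lemma~\ref{lem:exact} correctly reduces everything to $\ker(f)\cap\mathcal{I}\cdot\calK^\calB=0$, i.e.\ to the vanishing of the connecting map $\partial:H_1(\calV,\calK)\to\ker(f)$. But that vanishing is the entire content of the proposition, and you do not prove it: you explicitly flag it as ``the main obstacle'' and then offer only the sketch that freeness of $\calK$ lets one ``cancel the cocycle contributions basis element by basis element.'' That sketch does not work as stated. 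Trivializing $c$ means finding $\tau:\calK\to\ker(f)$ with $c(v,k)=\tau(v.k)-\tau(k)$; since $\calV$ does not permute a $\Z$-basis of $\calK$ but acts by integer matrices, the equations $c(v,e_j)=\tau(v.e_j)-\tau(e_j)$ form a coupled system over all $v\in\calV$ and all basis vectors, not a sequence of independent choices. Freeness of $\calK$ only kills $\mathrm{Ext}^1_\Z(\calK,\ker f)$; the remaining obstruction lives in $H^1(\calV,\Hom(\calK,\ker f))$, and the single relation $\overline t^\calV.t^{\calK^\calB}=-t^{\calK^\calB}$ that you invoke does not by itself dispose of it. So the decisive step is a genuine gap.

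For comparison, the paper's proof is much shorter and avoids the double application of Lemma~\ref{lem:exact} entirely: since $\calK$ is free abelian and $\calK^\calB$ is abelian, the extension $\calK^\calB\to\calK$ splits, giving $\calK^\calB\cong\calZ\times\calK$ with $\calZ=\ker(\calB\to\calA)$; since $\calZ$ is central in $\calB$ the $\calV$-action on it is trivial, and the paper concludes $\calB\cong(\calZ\times\calK)\rtimes\calV\cong\calZ\times(\calK\rtimes\calV)\cong\calZ\times\calA$, after which the whole commutative diagram collapses to $\calZ\to\calZ\times\calA\to\calA$ over $\calZ\to\calZ\times\calA^\ab\to\calA^\ab$ and $\phi$ is forced to be the identity on $\calZ$. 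Note that the step $(\calZ\times\calK)\rtimes\calV\cong\calZ\times(\calK\rtimes\calV)$ is exactly the assertion that the splitting can be taken $\calV$-equivariant --- the same point your $\partial=0$ encodes --- so you have correctly located where the real content sits; but locating it is not the same as proving it, and your proposal as written stops short of a proof precisely there.
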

    
    \begin{proof}
      The central extension $\calK^\calB\to\calK$ splits, since
      $\calK$ is free abelian and $\calK^\calB$ is abelian. So we have
      $\calK^\calB\cong\calZ\times\calK$ for some abelian group
      $\calZ$. By Remark~\ref{rem:section}, we have
      $\calB\cong\calK^\calB\rtimes\calV$ and
      $\calA\cong\calK\rtimes\calV$. We can identify
      $\ker(\calA\to\calB)$ with $\calZ$. Since $\calZ$ is central in
      $\calB$, the action of $\calV$ on it is trivial. This implies
      \begin{align*}
        \calB
        &\cong
        (Z\times\calK)\rtimes\calV
        \cong
        Z\times(\calK\rtimes\calV)
        \cong
        \calZ\times\calA.
      \end{align*}
      So the diagram above becomes
      \begin{equation*}
        \xymatrix{
          \calZ
          \ar[rr]
          \ar[d]_\id
          &&
          \calZ\times\calA
          \ar[rr]
          \ar[d]
          &&
          \calA
          \ar[d]
          \\
          \calZ
          \ar[rr]
          &&
          \calZ\times\calA^\ab
          \ar[rr]
          &&
          \calA^\ab
        }
      \end{equation*}
      The identity map $\id:\calZ\to\calZ$ is the unique map that
      makes this diagram commute.  
    \end{proof}

    \section{Extensions using bihomomorphisms}                           %

    We continue to work in the setting of the previous section.
    In this section we will see how certain bihomomorphisms allow us to
    construct new reflection groups from $\calA$. This
    process could be investigated more generally using arbitrary
    cocycles. However the
    alternating bihomomorphic ones are the ones that we need 
    in order to construct the Weyl group in the last section.

    Suppose that $\calZ$ is an abelian group and that 
    $c:~
    \calK^2
    \to
    \calZ
    $
    is an alternating bihomomorphism that is $\calV$-invariant in the
    following sense: If $k$, $k'\in\calK$ and $v\in\calV$, then 
    \begin{math}
      c(v.k,v.k')=c(k,k').
    \end{math}
    Now consider the group  
    $\calK^c:=\calZ\times_c\calK$ 
    with the multiplication
    \begin{align*}
      (\calK^c)^2
      \to
      \calK^c,~
      \big((z_1,k_1),(z_2,k_2)\big)
      \mapsto
      (z_1+z_2+c(k_1,k_2),k_1k_2)
    \end{align*}
    and note that $\calV$ acts on it by group automorphisms via
    \begin{equation*}
      v.(z,k)=(z,v.k)
    \end{equation*}
    for $v\in\calV$, $z\in\calZ$ and $k\in\calK$. 
    So we can form the semidirect product
    $\calK^c\rtimes\calV$.
    Set
    \begin{equation*}
      \sbullet^\calX:~
      T\to\calK^c\rtimes\calV,~
      t\mapsto t^\calX
      =(0,t^\calK,\overline t^\calV)
    \end{equation*}
    and let $\calX$ be the subgroup of $\calK^c\rtimes\calV$ generated
    by $T^\calX$.
    The projection 
    $
    \calK^c\rtimes\calV
    \to
    \calK\rtimes\calV
    $
    is a group homomorphism which restricts to one 
    $\calX\to\calA$ such that $t^\calX$
    is mapped to $t^\calA$ for every $t\in T$. 
    Via this homomorphism we obtain an action
    of $\calX$ on $T$.

    \begin{defn}
      We will denote the pair 
      $(\calX,\sbullet^\calX)$ by $(\calA^c,\sbullet^{\calA^c})$ and
      call $c$
      a \emph{reflection bihomomorphism (for $\calA$)} 
      if
      $\calA^c$ is a $T$-reflection group. 
    \end{defn} 

    If $c$ is a reflection bihomomorphism, 
    then $\calA^c\to\calA$ is a reflection
    morphism. So
    $\overline{\calA^c}=\calV$ 
    and 
    $\calK_\calV^{\calA^c}$
    is the subgroup of $\calK^c$ generated by $\{0\}\times_cT^\calK$.

    \begin{lem}\label{lem:reflBihom}
      An alternating bihomomorphism 
      $c:~\calK^2\to\calZ$ is a reflection bihomomorphism if
      and only if 
      $
      c(
      s^\calV.t^\calK,t^\calK
      )
      =0
      $
      for every $s$ and $t\in T$.
    \end{lem}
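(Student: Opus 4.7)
The plan is to verify axioms (G1)--(G4) of Definition~\ref{defn:reflGroup} for $(\calA^c,\sbullet^{\calA^c})$ by direct calculation in $\calK^c\rtimes\calV$, and to identify which axiom forces the stated condition. Axiom (G1) holds by construction, and (G2) is immediate because the $\calA^c$-action on $T$ is defined via the projection $\calA^c\to\calA$, where (G2) already holds. So the content is concentrated in (G3) and (G4).

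For (G4) I would multiply $t^\calX=((0,t^\calK),t^\calV)$ by itself in the semidirect product. The $\calV$-component vanishes trivially, and the $\calK$-component vanishes because $(t^\calA)^2=1$ in $\calA$ forces $t^\calV.t^\calK=-t^\calK$. The only surviving contribution is the cocycle term $c(t^\calK,t^\calV.t^\calK)=c(t^\calK,-t^\calK)$, which equals $0$ since $c$ is alternating. Thus (G4) holds unconditionally and moreover $(t^\calX)^{-1}=t^\calX$.

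With (G4) in hand, (G3) reduces to checking $t^\calX s^\calX t^\calX=(t.s)^\calX$. Expanding the left-hand side in $\calK^c\rtimes\calV$, the $\calK$- and $\calV$-components of the result match those of $(t.s)^\calX$ automatically, because the corresponding identity $t^\calA s^\calA t^\calA=(t.s)^\calA$ holds in $\calA$. What remains is the $\calZ$-component, which expands to a sum of three values of $c$ involving $t^\calK$, $t^\calV.s^\calK$ and $(t^\calV s^\calV).t^\calK$. The main step is to apply the $\calV$-invariance of $c$ (translating by $t^\calV$ and then by $s^\calV$, and using $t^\calV.t^\calK=-t^\calK$) together with the alternating property to collapse these three terms into the single expression $-c(s^\calV.t^\calK,t^\calK)$. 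Axiom (G3) for a fixed pair $(s,t)$ is therefore equivalent to $c(s^\calV.t^\calK,t^\calK)=0$, and quantifying over $s,t\in T$ yields both directions of the lemma.

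The only real obstacle is careful bookkeeping of signs and $\calV$-translations in the final simplification; conceptually, the alternating and $\calV$-invariance hypotheses already absorb (G4) and the ``symmetric'' part of (G3), leaving exactly the single-term cocycle condition stated.
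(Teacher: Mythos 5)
Your proposal is correct and follows essentially the same route as the paper: a direct computation in $\calK^c\rtimes\calV$ showing that (G1), (G2) and (G4) hold unconditionally (the latter because $\overline t^\calV.t^\calK=-t^\calK$ and $c$ is alternating), and that the $\calZ$-component of $t^\calX s^\calX t^\calX$ collapses, via $\calV$-invariance and the alternating property, to $\pm c(s^\calV.t^\calK,t^\calK)$, so that (G3) is equivalent to the stated vanishing condition. The sign discrepancy with the paper's final expression is immaterial since the condition is an equality with zero.
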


    \begin{proof}
      Set $\calX:=\calY^c$ and note that
      \begin{align*}
        \overline t^\calV.t^\calK
        =
        \dot t^\calX 
        t^\calX
        \dot t^\calX
        \dot t^\calX
        =
        (t^\calK)^{-1}
      \end{align*}
      in $\calA$ for every $t\in T$. Written additively in $\calK$
      this means
      \begin{math}
        \overline t^\calV.t^\calK
        =
        -t^\calK.
      \end{math}
      Axioms (G1) and (G2) are satisfied. If $s$ and $t\in T$ then
      \begin{align*}
        (t^{\calX})^2
        &=
        \big(0,t^\calK,\overline t^\calV\big)^2
        =
        \big(
        c(t^\calK,
        \overline t^\calV.t^\calK),
        t^\calK
        +\overline t^\calV.t^\calK,
        (\overline t^\calV)^2\big)
        \\
        &=
        \big(
        -c(
        t^\calK,
        t^\calK
        ),0,1
        \big)
        =
        (0,0,1)
        \hspace{1cm}\text{and}
        \\
        t^\calX.s^\calX
        &=
        t^\calX
        s^\calX
        t^\calX
        =
        (0,t^\calK,\overline t^\calV)
        (0,s^\calK,\overline s^\calV)
        (0,t^\calK,\overline t^\calV)
        \\
        &=
        \big(
        c(t^\calK,\overline t^\calV.s^\calK),
        t^\calK+\overline t^\calV.s^\calK,
        \overline t^\calV\overline s^\calV
        \big)
        (0,t^\calK,\overline t^\calV)
        \\
        &=
        \Big(
        c(\overline t^\calV.t^\calK,s^\calK)
        +
        c\big(
        t^\calK
        +
        \overline t^\calV.s^\calK,
        (\overline t^\calV\overline s^\calV).t^\calK
        \big),
        \underbrace{*,*}_{(t.s)^\calA}
        \Big)
        \\
        &=
        \big(
        c(-t^\calK,s^\calK)
        +
        c(
        s^\calV.t^\calK-s^\calK,
        t^\calK
        ),
        (t.s)^\calK,
        \overline{t.s}^\calV
        \big)
        \\
        &=
        \big(
        c(s^\calV.t^\calK,t^\calK),
        (t.s)^\calK,
        \overline{t.s}^\calV
        \big).
      \end{align*}
      So, axioms (G3) and (G4) are satisfied if and only if 
      $c(s^\calV.t^\calK,t^\calK)=0$.
    \end{proof}

    \begin{rem}\label{rem:comMap}
      Let $\calX$ be an arbitrary $T$-reflection group and
      let $\zeta:~\calX\to\calA$ be a reflection morphism. We have the
      central extension $\zeta:~\calK^\calX\to\calK$.
      There is a unique map
      \begin{math}
        c_\calX:~
        \calK^2
        \to
        (\calK^\calX)'
      \end{math}
      satisfying
      \begin{math}
        c_\calX\big(\zeta(k),\zeta(l)\big)=[k,l]
      \end{math}
      for all $k$ and $l\in\calK$.
      This map is bilinear since $\calK^\calX$ is two-step nilpotent.
      It is also $\calV$-invariant and alternating. 
    \end{rem}
    Suppose $\calX=\calA^b$ for a reflection bihomomorphism
    $b:~\calK^2\to\calZ$. 
    \begin{lem}\label{lem:commutators}
      There is an injective group homomorphism
      $(\calK^\calX)'\to2\calZ$ such that the following diagram
      commutes: 
      \begin{align*}
        \xymatrix{
        \calK^2
        \ar[rrrr]^{c_\calX}
        \ar[rrd]_b
        &&&&
        (\calK^\calX)'
        \ar[d]
        \\
        &&
        \calZ
        \ar[rr]
        &&
        2\calZ
      }
      \end{align*}
    \end{lem}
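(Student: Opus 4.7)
The plan is to compute directly in the twisted group $\calK^\calX = \calK^b = \calZ\times_b\calK$, where by construction the multiplication is
$(z_1,k_1)(z_2,k_2)=(z_1+z_2+b(k_1,k_2),\ k_1+k_2)$ (with $\calK$ written additively). First I would observe that the natural inclusion $\calZ\hookrightarrow\calK^b$, $z\mapsto(z,0)$, is a group homomorphism because $b(0,0)=0$, so $\calZ\times\{0\}$ is a central subgroup of $\calK^b$. I would also compute that the inverse of $(z,k)$ is $(-z,-k)$, using that $b$ is alternating so $b(k,-k)=-b(k,k)=0$.

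Next I would evaluate the commutator of two arbitrary elements of $\calK^b$. A direct computation using the twisted multiplication and the identity $b(k_2,k_1)=-b(k_1,k_2)$ gives
\begin{align*}
  [(z_1,k_1),(z_2,k_2)]
  &=(z_1,k_1)(z_2,k_2)(-z_1,-k_1)(-z_2,-k_2)\\
  &=(2b(k_1,k_2),\ 0).
\end{align*}
The result is independent of the chosen lifts $z_1,z_2$ and lies in $2\calZ\times\{0\}$. Consequently $(\calK^\calX)'\subseteq 2\calZ\times\{0\}$, and projection onto the first coordinate defines an injective group homomorphism $(\calK^\calX)'\to2\calZ$.

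Finally, I would check that this injection makes the diagram commute. By Remark~\ref{rem:comMap}, $c_\calX(k_1,k_2)$ is defined as $[\tilde k_1,\tilde k_2]$ for any lifts $\tilde k_i\in\calK^\calX$ of $k_i\in\calK$; the computation above with lifts $(0,k_i)$ immediately yields $c_\calX(k_1,k_2)=(2b(k_1,k_2),0)$, which maps to $2b(k_1,k_2)\in2\calZ$ under the injection. This is exactly what the diagram asserts.

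The only real subtlety is keeping track of the twist $b$ while inverting and conjugating in $\calK^b$; because the commutator turns out to land in $\calZ\times\{0\}$ and is independent of lifts, the injection and commutativity of the diagram fall out at once, so there is no serious obstacle beyond the careful bookkeeping of signs and the alternating hypothesis on $b$.
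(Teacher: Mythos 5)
Your proposal is correct and follows essentially the same route as the paper: a direct computation of the commutator $[(z_1,k_1),(z_2,k_2)]=(2b(k_1,k_2),0)$ in $\calZ\times_b\calK$, from which the containment $(\calK^\calX)'\subseteq 2\calZ\times\{0\}$ and the commutativity of the diagram follow at once. The extra bookkeeping you supply (the inverse formula, centrality of $\calZ\times\{0\}$, independence of the lifts) is exactly the detail the paper leaves implicit.
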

    
    \begin{proof}
      For $(z_1,k_1)$ and $(z_2,k_2)\in\calK^\calX$,
      we can compute the
      commutator in $\calK^\calX$ as follows:
      \begin{lastdisplay}
      \begin{align*}
        &
        [(z_1,k_1),(z_2,k_2)]
        =
        (z_1,k_1)
        (z_2,k_2)
        (-z_1,-k_1)
        (-z_2,-k_2)
        \\
        &=
        (z_1+z_2+c(k_1,k_2),k_1+k_2)
        (-z_1-z_2+c(-k_1,-k_2),-k_1-k_2)
        \\
        &=
        (2c(k_1,k_2),0).
      \end{align*}
      \end{lastdisplay}
    \end{proof}
    \begin{cor}\label{cor:comMap}
      If $\calZ$ contains no involutions, then there 
      is an injective group homomorphism
      $(\calK^\calX)'\to\calZ$ such that the following diagram
      commutes: 
      \begin{align*}
        \xymatrix{
          \calK^2
          \ar[rr]
          \ar[rrd]
          &&
          (\calK^\calX)'
          \ar[d]
          \\
          &&
          \calZ.
        }
      \end{align*}
    \end{cor}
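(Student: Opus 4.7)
The plan is to derive the corollary as a direct consequence of Lemma~\ref{lem:commutators}. That lemma already supplies an injective group homomorphism $\psi:(\calK^\calX)'\hookrightarrow 2\calZ$ fitting into a triangle in which $\psi\circ c_\calX$ equals the composition $\calK^2\xrightarrow{b}\calZ\xrightarrow{\cdot 2}2\calZ$. All that is left is to canonically identify $2\calZ$ with $\calZ$ using the new hypothesis on $\calZ$.

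The key observation is that, in any abelian group $\calZ$ written additively, the doubling endomorphism $\mu:\calZ\to\calZ$, $z\mapsto 2z$, has kernel $\{z\in\calZ\mid 2z=0\}$, which consists of $0$ together with the involutions of $\calZ$. Under the hypothesis that $\calZ$ contains no involutions, $\mu$ is therefore injective, and by construction its image is $2\calZ$. Hence $\mu$ restricts to a group isomorphism $\calZ\xrightarrow{\sim}2\calZ$, and I let $h:2\calZ\xrightarrow{\sim}\calZ$ denote its inverse, the ``halving'' map.

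The desired homomorphism is then $\phi:=h\circ\psi:(\calK^\calX)'\to\calZ$. As a composition of two injective group homomorphisms it is itself injective, and commutativity of the triangle in the corollary is a one-line diagram chase using Lemma~\ref{lem:commutators}: $\phi\circ c_\calX=h\circ\psi\circ c_\calX=h\circ(\mu\circ b)=b$, which is exactly the diagonal arrow. There is no real obstacle here; the entire content of the corollary is packaged in the elementary observation that the no-involutions hypothesis makes multiplication by $2$ invertible on $\calZ$, after which the construction is immediate from the preceding lemma.
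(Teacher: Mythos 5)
Your proof is correct and is exactly the intended deduction: the paper states the corollary without proof as an immediate consequence of Lemma~\ref{lem:commutators}, and your observation that the no-involutions hypothesis makes the doubling map $\calZ\to 2\calZ$ an isomorphism (so that one can compose the lemma's injection with its inverse) is precisely the step being left to the reader.
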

    Now let $\calX$ be an arbitrary $T$-reflection
    group. 
    The following Proposition is key observation that makes a connection
    between the bihomomorphism $c_\calX$ and reflection
    bihomomorphisms. 
    
    \begin{prop}\label{prop:commutatorMap}
      If $\calA$ separates reflections, 
      then the map $c_\calX$ is a reflection bihomomorphism.
    \end{prop}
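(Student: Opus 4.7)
The plan is as follows. By Remark~\ref{rem:comMap}, the map $c_\calX$ is already bilinear, alternating, and $\calV$-invariant into the abelian group $(\calK^\calX)'$, so by Lemma~\ref{lem:reflBihom} the only remaining task is to verify
\begin{equation*}
c_\calX(s^\calV.t^\calK,\, t^\calK) \;=\; 0 \qquad \text{for all } s,t\in T.
\end{equation*}

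First I would make the commutator explicit using the preimages supplied by Remark~\ref{rem:section}. The element $\tilde k_t := t^\calX \dot t^\calX \in \calK^\calX$ is a preimage of $t^\calK$ under $\zeta$, and applying axiom (G3) inside $\calX$ gives $s^\calX \tilde k_t (s^\calX)^{-1} = (s.t)^\calX (s.\dot t)^\calX$, which is therefore a preimage of $s^\calV.t^\calK$. By the definition of $c_\calX$ in Remark~\ref{rem:comMap},
\begin{equation*}
c_\calX(s^\calV.t^\calK,\, t^\calK) \;=\; \bigl[(s.t)^\calX (s.\dot t)^\calX,\ t^\calX \dot t^\calX\bigr] \;\in\; (\calK^\calX)'.
\end{equation*}
The hypothesis that $\calA$ separates reflections is used next: since $\sbullet^\calA = \zeta\circ\sbullet^\calX$ is injective, $\calX$ also separates reflections, so Lemma~\ref{lem:center} gives $Z(\calX)=\ker\zeta$. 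In particular, $(\calK^\calX)'\subseteq\ker\zeta\subseteq Z(\calX)$, so the commutator above is central in $\calX$ and is automatically annihilated under $\zeta$.

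The substantive step is to show this central element is actually trivial. My plan is to build a group homomorphism $\calU \to \calA^{c_\calX}$ that sends $t^\calU\mapsto(0,\,t^\calK,\,\overline t^\calV)$; the task then reduces to checking that the defining relations of $\calU$ from \eqref{eq:preConj} are satisfied by these images inside the abstract twisted product $\calA^{c_\calX}$. The involution relation $(t^\calU)^2=1$ is immediate from $\overline t^\calV.t^\calK = -t^\calK$ together with the alternating property of $c_\calX$, as in the first computation in the proof of Lemma~\ref{lem:reflBihom}. The conjugation relation is where the work is: repeating the second computation in that proof shows
\begin{equation*}
(0,\,t^\calK,\,\overline t^\calV)(0,\,s^\calK,\,\overline s^\calV)(0,\,t^\calK,\,\overline t^\calV)^{-1}
\;=\;
\bigl(c_\calX(s^\calV.t^\calK,\,t^\calK),\ (t.s)^\calK,\ \overline{t.s}^\calV\bigr),
\end{equation*}
and this must equal $(0,\,(t.s)^\calK,\,\overline{t.s}^\calV)$, forcing the desired vanishing.

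The main obstacle is that this formulation is formally circular: verifying that the assignment extends to a homomorphism $\calU\to\calA^{c_\calX}$ literally requires the identity we are after. The resolution I envisage is to turn the argument around and work inside $\calX$ itself: using the semidirect decomposition $\calX\cong\calK^\calX\rtimes\calV$ from Remark~\ref{rem:section}, expand $t^\calX s^\calX (t^\calX)^{-1}$ in semidirect form, and compare the $\calK^\calX$-component with the known value $(t.s)^\calX \dot{(t.s)}{}^\calX$ forced by axiom (G3) in $\calX$. This comparison yields a cocycle identity in $\calK^\calX$ whose image in $(\calK^\calX)'$, after extracting the commutator part by antisymmetrizing, is exactly $c_\calX(s^\calV.t^\calK,\,t^\calK)=0$. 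The separating-reflections hypothesis is what guarantees that $Z(\calX)=\ker\zeta$, so that no spurious central element in $\ker\zeta\setminus(\calK^\calX)'$ can absorb the discrepancy and prevent the cocycle identity from descending to the commutator subgroup.
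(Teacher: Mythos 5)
Your reduction is right: by Remark~\ref{rem:comMap} the map $c_\calX$ is an alternating $\calV$-invariant bihomomorphism, so by Lemma~\ref{lem:reflBihom} everything comes down to $c_\calX(s^\calV.t^\calK,t^\calK)=0$, and your commutator representative $[(s.t)^\calX(s.\dot t)^\calX,\,t^\calX\dot t^\calX]$ is a correct preimage computation. But the proof stops exactly where the work begins. You concede that the route through a homomorphism $\calU\to\calA^{c_\calX}$ is circular, and the replacement you offer --- expand $t^\calX s^\calX(t^\calX)^{-1}$ in the semidirect decomposition and ``extract the commutator part by antisymmetrizing'' --- does not produce the identity. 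Axiom (G3) holds identically in $\calX$ by hypothesis, so writing it out in semidirect form yields an equation in $\calK^\calX$ that is automatically satisfied and does not by itself determine any commutator; moreover $\calK^\calX$ is an arbitrary two-step nilpotent central extension of $\calK$, not a twisted product $\calZ\times_{c_\calX}\calK$ with bihomomorphic cocycle, so the computation of Lemma~\ref{lem:reflBihom} cannot be run inside it. Likewise, the observation $Z(\calX)=\ker\zeta$ only tells you the commutator is central and dies in $\calA$, which you already knew; it gives no reason for that element to be trivial in $(\calK^\calX)'$.

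The paper supplies the missing mechanism, and it is not an antisymmetrization. One writes the commutator as a twisted conjugate of a generator: with $a=t^\sharp\dot t^\sharp$ one has $[a,\,\dot s^\sharp a\dot s^\sharp]=(w.\dot s^\sharp)\,\dot s^\sharp$ where $w=t^\sharp\dot t^\sharp\dot s^\sharp t^\sharp\dot t^\sharp$, an identity valid in either reflection group $\sharp\in\{\calX,\calA\}$. By equivariance $w.\dot s^\sharp=(w.\dot s)^\sharp$, where $w.\dot s\in T$ is computed through the action on $T$, and by Remark~\ref{rem:winvariance} this element of $T$ is the same whether $w$ is read in $\calX$ or in $\calA$. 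In $\calA$ the commutator vanishes because $\calK$ is abelian, so $(w.\dot s)^\calA=\dot s^\calA$, and the hypothesis that $\calA$ separates reflections --- used here, not to identify the center --- forces $w.\dot s=\dot s$ as elements of $T$. Transporting this back to $\calX$ gives $c_\calX(t^\calK,s^\calV.t^\calK)=\dot s^\calX\dot s^\calX=1$. Without this transfer through the shared action on $T$, your argument has no way to see why a central element of $\calK^\calX$ that is invisible in $\calA$ should nonetheless be trivial.
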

    
    \begin{proof}
      The map $c_\calX$ is bihomomorphic since $\calK^\calX$ is 2-step
      nilpotent. It is alternating and $\calV$-invariant. 
      Now let $s$ and 
      $t\in T$. Since $\calK$ is abelian, we have
      \begin{align*}
        \dot s^\calA
        &=
        \big[
        t^\calK,
        s^\calV.t^\calK
        \big]
        \dot s^\calA
        =
        \big[
        t^\calA
        \dot t^\calA,
        \dot s^\calA.\big(
        t^\calA
        \dot t^\calA
        \big)
        \big]
        \dot s^\calA
        \\
        &=
        t^\calA
        \dot t^\calA
        \dot s^\calA
        t^\calA
        \dot t^\calA
        \dot s^\calA
        \dot t^\calA
        t^\calA
        \dot s^\calA
        \dot t^\calA
        t^\calA
        \dot s^\calA
        \dot s^\calA
        =
        (
        t^\calA
        \dot t^\calA
        \dot s^\calA
        t^\calA
        \dot t^\calA
        ).
        \dot s^\calA.
      \end{align*}
      Since $\calA$ separates reflections, this entails
      \begin{math}
        \dot s
        =        
        (
        t^\calA
        \dot t^\calA
        \dot s^\calA
        t^\calA
        \dot t^\calA
        ).
        \dot s.
      \end{math}
      Using Remark~\ref{rem:winvariance} 
       we obtain
      \begin{align*}
        c_\calX\big(
        t^\calK,
        s^\calV.t^\calK
        \big)
        &=
        \big[
        t^\calX
        \dot t^\calX,
        \hat s.\big(
        t^\calX
        \dot t^\calX
        \big)
        \big]
        =
        \big(
        (
        t^\calX
        \dot t^\calX
        \dot s^\calX
        t^\calX
        \dot t^\calX
        ).
        \dot s^\calX
        \big)
        \dot s^\calX
        \\
        &=
        \big(
        (
        t^\calX
        \dot t^\calX
        \dot s^\calX
        t^\calX
        \dot t^\calX
        ).
        \dot s\big)^\calX
        \dot s^\calX
        =
        \big(
        (
        t^\calA
        \dot t^\calA
        \dot s^\calA
        t^\calA
        \dot t^\calA
        ).
        \dot s\big)^\calX
        \dot s^\calX
        =
        \dot s^\calX
        \dot s^\calX
        =
        1.          
      \end{align*}
      We are done by Lemma~\ref{lem:reflBihom}.
    \end{proof}

    \section{Finite root systems}                                        %

    In this section we investigate the action of the Weyl group
    $\calV$ of an irreducible finite root system $\Delta$ on the root
    lattice $\calL$ and the coroot lattice $\check\calL$. 
    We will derive results needed in
    the last section of this article. All of these results are stated
    in terms of the action of $\calV$ on the abelian groups
    $\calL$ and $\check\calL$ and
    their subsets $\Delta$ and $\check\Delta$ (the coroot system) and
    rely on the pairing
    $\check\calL\times\calL\to\Z$ and the
    bijection $\Delta\to\check\Delta$.

    Throughout this section, 
    let $\Delta$ be an irreducible finite root system with root basis
    $B$. 
    If $\alpha$ and $\beta$ are distinct roots in $B$,
    then we will say that $\alpha$ and
    $\beta$ are \emph{adjacent} and write $\alpha\sim\beta$ 
    if they are connected by an
    edge in the Dynkin diagram. We will say that $\alpha$ and $\beta$
    are
    \emph{simply adjacent} if they are
    connected by a simple edge. We will write
    $\alpha\not\sim\beta$ if $\alpha$ and $\beta$ are not connected by an
    edge.

    The root system $\Delta$ is the disjoint union of the divisible
    roots $\Delta_\ex$ (for extra long)
    and the indivisible ones $\Delta_\red$ (for reduced). The set
    $\Delta_\red$, in turn, can be partitioned into the short roots
    $\Delta_\sh$ and the long roots $\Delta_\lng$.
    The coroots $\check\Delta$ form a root system in their own right
    and we partition them in the same way: 
    $
    \check\Delta
    =
    \check\Delta_\sh
    \cup
    \check\Delta_\lng
    \cup
    \check\Delta_\ex.
    $
    Denote the root lattice by $\calL$ and the coroot lattice by
    $\check\calL$. There is a bihomomorphic pairing
    \begin{math}
      \check\calL\times\calL
      \to
      \Z,~
      (\lambda,\mu)\mapsto
      \langle\lambda,\mu\rangle
    \end{math}
    and a bijection
    \begin{math}
      \check\sbullet,~
      \Delta\to\check\Delta,~
      \alpha\mapsto\check\alpha.
    \end{math}    For every $\alpha\in\Delta$ there
    is a reflection $r_\alpha$ that acts in the following ways on
    $\calL$ and $\check\calL$:
    \begin{align*}
      r_\alpha.\lambda=\lambda-\langle\check\alpha,\lambda\rangle\alpha
      \text{~~~and~~~}
      r_\alpha.\mu=\mu-\langle\mu,\alpha\rangle\check\alpha
    \end{align*}
    for $\lambda\in\calL$ and $\mu\in\check\calL$. 
    The Weyl group $\calV$ is the subgroup of $\Aut(\calL)$ generated
    by the reflections $r_\Delta$. It acts faithfully and by group
    automorphisms on $\calL$ and
    $\check\calL$. We can view $\calL$ and $\check\calL$ as
    $\calV$-modules. The bijection $\Delta\to\check\Delta$ is
    $\calV$-equivariant and the pairing $\langle\sbullet,\sbullet\rangle$ is
    $\calV$-invariant in the following sense:
    We have     
    \begin{math}
      \langle v.\lambda,v.\mu\rangle
      =
      \langle\lambda,\mu\rangle
    \end{math}
    for $\lambda\in\calL$, $\mu\in\check\calL$ and $v\in\calV$.
    We provide some
    important values of the pairing for root systems of rank 2
    in Table~\ref{tab:rootSystems} using the standard terminology of
    the classification of the irreducible finite root systems.    
    \begin{table}[t]
      \centering
      \begin{tabular}{c|c|c|c|c|c}
        \bf Type
        &
        \bf Dynkin Diagram
        &
        $\langle\check\alpha,\beta\rangle$
        &
        $\langle\check\beta,\alpha\rangle$
        &
        $r_\alpha.\beta$
        &
        $r_\beta.\alpha$
        \\
        \hline
        $A_2$
        &
        $
        \alpha~   
        {\circ} 
        {\put(-1,3){\line(1,0){28}}}
        {\hspace{0.3cm}}
        \phantom{>}
        {\hspace{0.3cm}}
        {\circ}
        ~\beta
        $
        &
        -1
        &
        -1
        &
        $\beta+\alpha$
        &
        $\alpha+\beta$
        \\
        $B_2$
        &
        $
        \alpha~   
        {\circ} 
        {\put(-1,2){\line(1,0){28}}}
        {\put(-1,4){\line(1,0){28}}}
        {\hspace{0.3cm}}
        {<}
        {\hspace{0.3cm}}
        {\circ}
        ~\beta
        $
        &
        -2
        &
        -1
        &
        $\beta+2\alpha$
        &
        $\alpha+\beta$
        \\
        $G_2$
        &
        $   
        \alpha~   
        {\circ} 
        {\put(-2,1.2){\line(1,0){30}}}
        {\put(-1,3){\line(1,0){28}}}
        {\put(-2,4.8){\line(1,0){30}}}
        {\hspace{0.3cm}}
        {<}
        {\hspace{0.3cm}}
        {\circ}
        ~\beta
        $
        &
        -3
        &
        -1
        &
        $\beta+3\alpha$
        &
        $\alpha+\beta$
      \end{tabular}
      \caption{Irreducible finite root systems of rank 2}
      \label{tab:rootSystems}
    \end{table}

    If we set
    \begin{math}
      T:=\{r_\alpha~|~\alpha\in\Delta\},
    \end{math}
    then $T$ is a symmetric system with the multiplication defined
    by $t.s=tst^{-1}$ for $s$ and $t\in T$. 
    \begin{defn}
      We call $T$ 
      \emph{the symmetric system associated to $\Delta$.}
    \end{defn}
    If
    \begin{math}
      S:=\{r_\alpha~|~\alpha\in B\},
    \end{math}
    then $(\calV,S)$ is a Coxeter system. As discussed in
    Example~\ref{exmp:coxeter}, the group $\calV$ is the initial
    $T$-reflection group. Recall that for $\alpha$ and
    $\beta\in\Delta$ we write $r_\alpha\perp r_\beta$ if
    $r_\alpha\neq r_\beta$ and $[r_\alpha,r_\beta]=1$.  

    \begin{rem}\label{rem:calLId}
      If $\Delta$ is reduced and simply laced,
      i.e. $\Delta_\lng=\Delta_\ex=\emptyset$, then the map
      $\Delta\to\check\Delta$ extends to a $\calV$-equivariant group
      isomorphism $\calL\to\check\calL$. So we can identify the
      $\calV$-modules 
      $\calL$ and $\check\calL$.
      If $\Delta$ is non-reduced,
      i.e. $\Delta_\ex\neq\emptyset$, then there is an
      $\calL$-equivariant isomorphism $\calL\to\check\calL$ that maps
      $\Delta$ in the following way:
      \begin{align*}
        \alpha\mapsto
        \begin{cases}
          \check\alpha&\text{if $\alpha\in\Delta_\lng$},
          \\
          \frac12\check\alpha&\text{if $\alpha\in\Delta_\sh$}, 
          \\
          2\check\alpha&\text{if $\alpha\in\Delta_\ex$}.
        \end{cases}
      \end{align*}
      Again, we may identify the $\calV$-modules
      $\calL$ and $\check\calL$.
    \end{rem}  
    
    Now suppose $\Delta$ is reduced and non-simply laced,
    i.e. $\Delta_\ex=\emptyset$ and $\Delta_\lng\neq\emptyset$.
    Then 
    $\Delta_\sh$ is mapped to $\check\Delta_\lng$ and
    $\Delta_\lng$ is mapped to $\check\Delta_\sh$ by 
    $\Delta\to\check\Delta$.
    Set
    \begin{align}\label{eq:k}
      k_\Delta:=
      \begin{cases}
        2&\text{if $\Delta$ is of type 
          $B_\ell(\ell\ge2)$,
          $C_\ell(\ell\ge3)$,
          $F_4$ or
          $BC_\ell(\ell\ge2)$,}
        \\
        3&\text{if $\Delta$ is of type 
          $G_2$.}
      \end{cases}
    \end{align}
    \begin{lem}\label{lem:nonsimplyId}
      There is an
      injective $\calV$-equivariant group homomorphism from
      $\calL\to\check\calL$ satisfying
      \begin{align*}
        \phi(\alpha)
        =
        \begin{cases}
          \check\alpha&\text{if $\alpha\in\Delta_\lng$} 
          \\
          k_\Delta\check\alpha&\text{if $\alpha\in\Delta_\sh$} 
        \end{cases}
      \end{align*}
      Via $\phi$ the $\calV$-module
      $\calL$ can be identified with the submodule of $\check\calL$
      generated by $\check\Delta_\sh$. With this identification we
      have
      $k_\Delta\check\calL\subseteq\calL\subseteq\check\calL$.
     \end{lem}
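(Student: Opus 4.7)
The plan is to realize $\phi$ as the natural inclusion of $\calL$ into $\check\calL$ inside a common ambient real vector space $E := \calL\otimes_\Z\R$, using a Weyl-invariant inner product. I will fix a $\calV$-invariant positive-definite symmetric bilinear form on $E$, normalized so that short roots have squared length $2$ (so long roots have squared length $2k_\Delta$). Under this form $\check\alpha = 2\alpha/(\alpha,\alpha)$ lies in $E$, giving $\check\alpha_s = \alpha_s$ for short $\alpha_s$ and $\check\alpha_l = \alpha_l/k_\Delta$ for long $\alpha_l$. The canonical isomorphism $\check\calL\otimes_\Z\R\cong E$ via the form then makes $\check\calL$ a sublattice of $E$ alongside $\calL$.

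Next I verify the chain $k_\Delta\check\calL\subseteq\calL\subseteq\check\calL$ inside $E$. The lattice $\check\calL$ is $\Z$-generated by $\alpha_s$ and $\alpha_l/k_\Delta$, so it contains all of $\calL$ (each $\alpha_l=k_\Delta\check\alpha_l\in\check\calL$), and $k_\Delta\check\calL=\langle k_\Delta\alpha_s,\alpha_l\rangle_\Z\subseteq\calL$. I take $\phi:\calL\to\check\calL$ to be the resulting inclusion. Injectivity is immediate; $\calV$-equivariance follows because $\calV$ acts through the same linear operators on $E$ from either side. The formula on roots of each length class is then immediate from $\check\alpha_s=\alpha_s$ and $k_\Delta\check\alpha_l=\alpha_l$.

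For the identification of $\calL$ with the submodule of $\check\calL$ generated by $\check\Delta_\sh$, under the inclusion above this submodule becomes $\langle\alpha:\alpha\in\Delta_\sh\rangle_\Z$, so the claim reduces to showing that every long root is a $\Z$-linear combination of short roots. This I will verify case-by-case using the classification: in $B_\ell$, each $\pm e_i\pm e_j$ is a sum of two basis vectors $\pm e_k$; in $C_\ell$, $2e_i=(e_i+e_j)+(e_i-e_j)$; $F_4$ is analogous using its short roots $\pm e_i$; and in $G_2$ one reads it off from the explicit list of six positive roots in the standard tables. The main obstacle I anticipate is the bookkeeping for the normalization of the bilinear form and the conventions on $\check\sbullet$ and on the partition of $\check\Delta$ so that the factor $k_\Delta$ lands on the correct side of the stated formula; once those conventions are pinned down, the chain of inclusions and the formula drop out of the identifications with no further work.
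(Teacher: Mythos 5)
Your proposal is correct and takes essentially the same route as the paper: both realize $\calL$ and $\check\calL$ inside one Euclidean space via a $\calV$-invariant form (the paper normalizes long roots to squared length $2$, so that $\phi$ becomes multiplication by $k_\Delta$; you normalize short roots, so that $\phi$ becomes an honest inclusion --- the same map up to a global rescaling), and both reduce the description of the image to the fact that a root lattice is generated by its short roots, which you verify case-by-case where the paper simply cites it. On the bookkeeping issue you flagged: your computation yields $\phi(\alpha)=\check\alpha$ for $\alpha\in\Delta_\sh$ and $\phi(\alpha)=k_\Delta\check\alpha$ for $\alpha\in\Delta_\lng$, with image generated by $\{\check\alpha\mid\alpha\in\Delta_\sh\}$; this transposes the two cases of the displayed formula, but your version is the one that is actually additive (in type $C_\ell$ the printed assignment would send $2e_1=(e_1+e_2)+(e_1-e_2)$ to $\check{(2e_1)}=e_1$ while sending the two short summands to $2(e_1\pm e_2)$, which is inconsistent) and it agrees with what the paper's own proof computes, so the discrepancy is a misprint in the statement rather than a gap in your argument.
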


    \begin{proof}
      The groups $\calL$ and
      $\check\calL$ can be viewed as subgroups of a real vector space
      $V$ and its dual $V^*$ respectively. According to 
      \cite{bou}~Ch.~{\sc vi},%
      ~$\mathrm n^{\mathrm o}$~1.1~Proposition~3
      and
      \cite{bou}~Ch.~{\sc vi},%
      ~$\mathrm n^{\mathrm o}$~1.4~Proposition~11
      we can
      pick a $\calV$-invariant symmetric non-degenerate bilinear form
      $(\sbullet|\sbullet)$ on $V$ that satisfies $(\beta,\beta)=2$ for
      all $\beta\in\Delta_\lng$. If we use this form 
      to identify $V$ with $V^*$, then we have
      \begin{align*}
        \check\alpha=\frac{2\alpha}{(\alpha|\alpha)}
      \end{align*}
      for every $\alpha\in\Delta$. If $\alpha\in\Delta_\sh$ and
      $\beta\in\Delta_\lng$ then      
      \begin{align*}
        \frac{(\beta|\beta)}{(\alpha|\alpha)}
        =
        \frac{\langle\check\alpha,\beta\rangle}%
        {\langle\check\beta,\alpha\rangle}
        =k_\Delta,
        \text{~~so~~}
        \check\alpha=\frac{2\alpha}{(\alpha|\alpha)}=k_\Delta\alpha
        \text{~~and~~}
        \check\beta=\frac{2\beta}{(\beta|\beta)}=\beta.
      \end{align*}
      The last statement in the Lemma follows from the fact that the
      root lattice of a root system is generated by the short roots. 
    \end{proof}

    \begin{rem}\label{rem:nonReduced}
      If $\Delta$ is non-reduced, i.e. is of type
      $BC_\ell(\ell\ge1)$ then $\Delta_\red$ is a reduced irreducible root
      system. (See for instance
      \cite{bou}~Ch.~{\sc vi},~$\mathrm{n}^\mathrm{o}$~4.14.)
      The Weyl
      group and the root lattice remain unchanged when passing from
      $\Delta$ to $\Delta_\red$ and the following changes in types can
      occur: 
      \begin{lastdisplay}
      \begin{equation*}
        BC_\ell\mapsto
        \begin{cases}
          A_1&\text{if $\ell=1$}\\
          B_\ell&\text{if $\ell\ge2$}.
        \end{cases}
      \end{equation*}
      \end{lastdisplay}
    \end{rem}

    Set
    \begin{equation*}
      \calL_\eff
      =
       \langle v.l-l~|~v\in\calV,l\in\calL\rangle.
    \end{equation*}

    \begin{prop}\label{prop:calLEff}
      \begin{equation*}
        \frac{
          \calL
        }{
          \calL_\eff
        }
        \cong
        \begin{cases}
          \Z_2
          &
          \text{
            if $\Delta$ is of type 
            $A_1$,
            $B_\ell(\ell\ge2)$ or
            $BC_\ell(\ell\ge1)$},
          \\
          \{0\}
          &
          \text{otherwise}.
        \end{cases}
      \end{equation*}
      In the case of $\calL/\calL_\eff\cong\Z_2$ 
      the quotient map $\calL\to\Z_2$ satisfies
      \begin{equation*}
        \alpha\mapsto
        \begin{cases}
          1&\text{if $\alpha$ is short or the type is $A_1$}\\
          0&\text{otherwise}
        \end{cases}
      \end{equation*}
      for every $\alpha\in\Delta$. 
    \end{prop}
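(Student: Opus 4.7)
The plan is to compute $\calL_\eff$ directly from its generators. Since $\calV$ is generated by the reflections $r_\alpha$ ($\alpha\in\Delta$), a short induction on word length shows that $\calL_\eff$ is the subgroup of $\calL$ generated by the elements $r_\alpha.\beta-\beta=-\langle\check\alpha,\beta\rangle\alpha$ with $\alpha,\beta\in\Delta$. From this I read off several containments: (i) $2\alpha\in\calL_\eff$ for every $\alpha\in\Delta$ (take $\beta=\alpha$); (ii) $\alpha\in\calL_\eff$ whenever $\alpha$ is simply adjacent to some simple root $\beta$, since then $\langle\check\alpha,\beta\rangle=-1$; (iii) for a doubly adjacent pair $\{\alpha,\beta\}$ of simple roots with $\alpha$ \emph{long}, one has $\alpha\in\calL_\eff$ via $r_\beta.\alpha-\alpha=\alpha$; (iv) in the triple-edge situation of $G_2$, combining $2\gamma,3\gamma\in\calL_\eff$ for the short simple root $\gamma$ forces $\gamma\in\calL_\eff$.

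A diagram-by-diagram check then shows $\calL=\calL_\eff$ for every type outside the list. For each simply laced type of rank $\ell\ge2$ every simple root is simply adjacent to another and hence lies in $\calL_\eff$; the same reasoning handles $F_4$, whose two short and two long simple roots each form a simply adjacent pair, and $G_2$ via (iv). For $C_\ell$ with $\ell\ge3$ the short simple roots $\alpha_1,\ldots,\alpha_{\ell-1}$ form a chain of simply adjacent roots and so lie in $\calL_\eff$ by (ii), while the unique long simple root $\alpha_\ell$ satisfies $r_{\alpha_\ell}.\alpha_{\ell-1}-\alpha_{\ell-1}=\alpha_\ell$ (the Cartan integer $\langle\check\alpha_\ell,\alpha_{\ell-1}\rangle$ equals $-1$ rather than $-2$ here, since $\alpha_\ell$ is the long root). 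In the remaining cases the only simple root not yet seen to lie in $\calL_\eff$ is a short one---the unique simple root for $A_1$, and $\alpha_\ell$ for $B_\ell$ with $\ell\ge2$---and for that root $2\alpha\in\calL_\eff$ by (i), giving the upper bound $|\calL/\calL_\eff|\le2$.

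To prove that this quotient is actually nontrivial and to obtain the stated formula, I construct the projection on the root basis: send each short simple root (or the unique simple root, in the $A_1$ case) to $1$ and each long simple root to $0$, then extend $\Z$-linearly to get $\phi:\calL\to\Z_2$. Well-definedness modulo $\calL_\eff$ reduces to showing $-\langle\check\alpha,\beta\rangle\phi(\alpha)\equiv0\pmod{2}$ for all $\alpha,\beta\in\Delta$, which is automatic when $\alpha$ is long. When $\alpha$ is short (or is the unique simple root in $A_1$), the Cartan integer $\langle\check\alpha,\beta\rangle$ is always even: for $A_1$ this is trivial since $\langle\check\alpha,\pm\alpha\rangle=\pm2$; for $B_\ell$ and $BC_\ell$ the standard embedding has short roots $\pm e_i$ with $\langle\check e_i,\beta\rangle=2(e_i,\beta)$ and $(e_i,\beta)\in\Z$. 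The map $\phi$ is plainly surjective, so combined with the upper bound it yields the isomorphism $\calL/\calL_\eff\cong\Z_2$; the formula for an arbitrary $\alpha\in\Delta$ then follows by $\calV$-invariance of $\phi$ since $\calV$ preserves the partition into short, long and extra-long roots. The $BC_\ell$ case can be reduced to $A_1$ ($\ell=1$) or $B_\ell$ ($\ell\ge2$) via Remark~\ref{rem:nonReduced}, because passing to $\Delta_\red$ changes neither $\calV$ nor $\calL$.

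The main obstacle is essentially the bookkeeping in the case analysis; the delicate point is observing that in $C_\ell$ the long simple root $\alpha_\ell$ actually belongs to $\calL_\eff$ (in contrast to the short $\alpha_\ell$ in $B_\ell$), and this asymmetry in the double-bond Cartan integers is what makes types $B$ and $C$ behave differently in the statement.
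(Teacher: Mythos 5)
Your argument is correct in substance and follows essentially the same route as the paper: reduce $BC_\ell$ to the reduced case via Remark~\ref{rem:nonReduced}, show $\calL_\eff=\calL$ outside the listed types by producing for each simple root $\alpha$ a root $\beta$ with $\langle\check\alpha,\beta\rangle$ odd, and exhibit an index-two sublattice for $A_1$ and $B_\ell(\ell\ge2)$. The only real divergence is at the last step: the paper identifies $\calL_\eff$ for $B_\ell$ with the even-coordinate-sum sublattice of $\Z^\ell$ using transitivity of $\calV$ on each length class, whereas you build the homomorphism $\phi:\calL\to\Z_2$ on the simple-root basis and verify it kills the generators of $\calL_\eff$ by a parity check on the Cartan integers $\langle\check\alpha,\beta\rangle$ for $\alpha$ short; these are equivalent, and your version has the small advantage of making the claimed formula for $\phi$ on all of $\Delta$ fall out directly.

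Two points need tightening, though neither affects the soundness of the overall argument. First, item (iii) as written cannot be right: $r_\beta.\alpha-\alpha=-\langle\check\beta,\alpha\rangle\beta$ lies in $\Z\beta$, not in $\Z\alpha$, so it cannot witness $\alpha\in\calL_\eff$. What you need --- and what you do state correctly in the $C_\ell$ paragraph --- is the reflection in the \emph{long} root applied to the adjacent \emph{short} root: $r_\alpha.\beta-\beta=-\langle\check\alpha,\beta\rangle\alpha=\alpha$ for $\alpha$ long and $\beta$ short adjacent. Second, in the $G_2$ case you invoke only (iv), which places the short simple root $\gamma$ in $\calL_\eff$; since $\calL=\Z\gamma\oplus\Z\delta$ you must still account for the long simple root $\delta$. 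This follows from the same mechanism: $\langle\check\delta,\gamma\rangle=-1$, hence $r_\delta.\gamma-\gamma=\delta\in\calL_\eff$. With these repairs the proof is complete.
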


    \begin{proof}
      In view of Remark~\ref{rem:nonReduced}, it suffices to consider
      reduced root systems $\Delta$.
      The following is true if $\Delta$ is a
      root system of any type except
      $A_1$ or $B_\ell(\ell\ge2)$: 
      If $\alpha\in B$ then there
      is a root $\beta\in R$ such that
      $r_\alpha.\beta=\beta+\alpha$,
      i.e. $\langle\check\alpha,\beta\rangle=-1$. 
      We will show this by using the classification of the finite
      irreducible root systems and the information from
      Table~\ref{tab:rootSystems}.  In the cases
      $A_\ell(\ell\ge 2)$, 
      $D_\ell(\ell\ge 4)$,
      $E_6$, $E_7$, $E_8$ and
      $F_4$ every vertex of the Dynkin diagram is connected to another
      vertex by a simple edge.
      So we can take $\beta$ to be in $B$ and simply
      adjacent to $\alpha$.
      In the
      case of type $C_\ell(\ell\ge3)$ and $\alpha$ long we may use
      the only adjacent root $\beta$. To understand the case $G_2$,
      let $B=\{\alpha,\beta\}$ with $\alpha$ short and $\beta$
      long. Then we have
      \begin{eqnarray*}
        r_\alpha.(\beta+\alpha)&=&\beta+2\alpha
        \text{~~~and}
        \\
        r_\beta.\alpha&=&\alpha+\beta.
      \end{eqnarray*}
      We may conclude
      \begin{math}
        \calL_\eff
        =
        \calL
      \end{math}
      and thus $\calL/\calL_\eff=\{0\}$.

      Now we look at the construction of $B_\ell$ given in 
      \cite{bou}~Ch.~{\sc vi},~$n^0$~4.5. By allowing $\ell=1$ we also
      cover the case of $A_1$. 
      So let
      $\epsilon_1,\dots\epsilon_\ell$ be a basis of $\Z^n$. 
      The short roots in $\Delta$ are 
      $\pm\epsilon_i~(i=1,\dots,\ell)$ and the long roots are
      $\pm\epsilon_i\pm\epsilon_j~(1\le i<j\le\ell)$. The root lattice
      $\calL$ is $\Z^n$. 
      According to
      \cite{bou}~Ch.~{\sc vi},~$n^0$~1.4 Proposition~11, the Weyl
      group $\calV$ acts transitively on the short roots and on the long
      roots. So
      \begin{equation*}
        \calL_\eff
        =
        \left\{
        \sum_{i=1}^\ell a_i\epsilon_i
        ~\Big|~
        a_1,\dots,a_\ell\in\Z,~
        \sum_{i=1}^\ell a_i\equiv0\mod 2
        \right\}
      \end{equation*}
      This entails $\calL/\calL_\eff=\Z_2$.
      The second statement of the
      proposition is clear from the construction.
    \end{proof}

    \begin{lem}\label{lem:universeZBil}
      There is a $\calV$-invariant bihomomorphism
      \begin{math}
        (\sbullet|\sbullet):~
        \calL\times\calL\to\Z
      \end{math}
      with the following universal property:
      If $\beta:~\calL\times\calL\to\Z$ is a $\calV$-invariant
      bihomomorphism, then there is a unique group
      homomorphism $\phi:~\Z\to\Z$ such that the following diagram
      commutes:
      \begin{equation*}
        \xymatrix{
          \calL\times\calL
          \ar[rr]^{(\sbullet,\sbullet)}
          \ar[rrd]_{\beta}
          &&
          \Z
          \ar@{.>}[d]^\phi
          \\
          &&
          \Z.
        }
      \end{equation*}
      Moreover, the bihomomorphism $(\sbullet|\sbullet)$ is symmetric and
      satisfies 
      $(\alpha|\beta)=0$ for
      $\alpha,\beta\in\Delta$ with $r_\alpha\perp r_\beta$.
    \end{lem}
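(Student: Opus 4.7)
The approach is to show that the group of $\calV$-invariant bihomomorphisms $\calL\times\calL\to\Z$ is infinite cyclic, and to take $(\sbullet|\sbullet)$ to be its positive generator. The key input is $\calV$-invariance specialised to a single reflection. For any $\calV$-invariant bihomomorphism $\beta$ and any root $\alpha$, expanding the identity $\beta(r_\alpha.x,r_\alpha.y)=\beta(x,y)$ via $r_\alpha.z=z-\langle\check\alpha,z\rangle\alpha$ once with $x=\alpha$ and once with $y=\alpha$, and using $\langle\check\alpha,\alpha\rangle=2$, yields
\begin{align*}
  \beta(\alpha,y)=\tfrac12\beta(\alpha,\alpha)\langle\check\alpha,y\rangle=\beta(y,\alpha)
\end{align*}
for every $y\in\calL$. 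Since $\calL$ is generated by $\Delta$, this forces $\beta$ to be symmetric on all of $\calL$.

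The same identity shows that $\beta$ is determined by the single integer $\beta(\alpha_0,\alpha_0)$ at one fixed short root $\alpha_0$. In the simply laced case this is immediate from transitivity of $\calV$ on $\Delta$. In the non-simply laced reduced case, comparing the identity applied with $\alpha=\alpha_{\sh}$, $y=\alpha_{\lng}$ and with $\alpha=\alpha_{\lng}$, $y=\alpha_{\sh}$ for adjacent simple roots, and invoking the symmetry of $\beta$ together with the values in Table~\ref{tab:rootSystems}, forces $\beta(\alpha_{\lng},\alpha_{\lng})=k_\Delta\,\beta(\alpha_{\sh},\alpha_{\sh})$; Remark~\ref{rem:nonReduced} reduces the non-reduced case to $B_\ell$. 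Thus $\beta\mapsto\beta(\alpha_0,\alpha_0)$ embeds the group of $\calV$-invariant bihomomorphisms into $\Z$ as some subgroup $d\Z$, with $d>0$ witnessed by the nonzero invariant form $(x,y)\mapsto\sum_{\gamma\in\Delta}\langle\check\gamma,x\rangle\langle\check\gamma,y\rangle$.

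I take $(\sbullet|\sbullet)$ to be the generator of this cyclic group, so $(\alpha_0|\alpha_0)=d$. Its image in $\Z$ must be all of $\Z$: if it were $e\Z$ with $e>1$, then $\tfrac1e(\sbullet|\sbullet)$ would be a further $\calV$-invariant $\Z$-valued bihomomorphism with value $d/e\notin d\Z$ at $(\alpha_0,\alpha_0)$, contradicting the generator property. Given an arbitrary $\calV$-invariant $\beta$, writing $\beta(\alpha_0,\alpha_0)=kd$ forces $\beta=k(\sbullet|\sbullet)$ by injectivity of the evaluation, and $\phi\colon\Z\to\Z$, $n\mapsto kn$ is then the unique homomorphism making the diagram commute, thanks to the surjectivity of $(\sbullet|\sbullet)$. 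Symmetry of $(\sbullet|\sbullet)$ has already been established. For the final claim, if $r_\alpha\perp r_\beta$ then $r_{r_\alpha.\beta}=r_\alpha r_\beta r_\alpha=r_\beta$ and $r_\alpha\neq r_\beta$, so $r_\alpha.\beta\in\{\pm\beta\}$; the minus sign is excluded, since it would put $\beta\in\R\alpha$ and thus $r_\beta=r_\alpha$. Hence $\langle\check\alpha,\beta\rangle=0$ and $(\alpha|\beta)=\tfrac12(\alpha|\alpha)\langle\check\alpha,\beta\rangle=0$.

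The main obstacle is the compatibility step $\beta(\alpha_{\lng},\alpha_{\lng})=k_\Delta\,\beta(\alpha_{\sh},\alpha_{\sh})$ in the non-simply laced case: one must ensure that no free integer parameter is lost, so that the space of invariant bihomomorphisms is genuinely cyclic rather than of higher rank. Once this is secured, the surjectivity of $(\sbullet|\sbullet)$ and the universal property fit together formally from the reflection identity above.
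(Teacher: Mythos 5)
Your proof is correct, but it takes a genuinely different route from the paper's. The paper obtains the existence of a $\calV$-invariant symmetric bihomomorphism by citing Bourbaki (Ch.~VI, n$^{\mathrm o}$~1.1, Prop.~3), rescales it so that $(\calL|\calL)=\Z$, and then derives the universal property by extending scalars to $\Q$ and invoking Bourbaki's uniqueness-up-to-a-scalar of invariant bilinear forms (Ch.~V, n$^{\mathrm o}$~2.2, Prop.~1), the scalar being forced into $\Z$ because $1\in(\calL|\calL)$. You instead work entirely over $\Z$: the single identity $2\beta(\alpha,y)=\langle\check\alpha,y\rangle\,\beta(\alpha,\alpha)$, extracted from invariance under one reflection, simultaneously yields the symmetry, the vanishing on perpendicular pairs, and the fact that evaluation at $(\alpha_0,\alpha_0)$ embeds the group of invariant bihomomorphisms into $\Z$; the short/long compatibility $\beta(\alpha_{\lng},\alpha_{\lng})=k_\Delta\,\beta(\alpha_{\sh},\alpha_{\sh})$ is indeed exactly the point that kills the potential second parameter, and your derivation of it from symmetry plus Table~\ref{tab:rootSystems} is correct. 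Your explicit witness $\sum_{\gamma\in\Delta}\langle\check\gamma,x\rangle\langle\check\gamma,y\rangle$ replaces the Bourbaki existence citation, and your $\tfrac1e$-rescaling argument replaces the paper's normalization step. What the paper's route buys is brevity by outsourcing two facts to standard references; what yours buys is a self-contained argument that never leaves $\Z$ (no passage to $\calL_\Q$) and that exposes the symmetry and the orthogonality claim as corollaries of one computation rather than as separately asserted properties. The only cosmetic caveat is that the displayed $\tfrac12\beta(\alpha,\alpha)\langle\check\alpha,y\rangle$ should be read as the integer identity $2\beta(\alpha,y)=\beta(\alpha,\alpha)\langle\check\alpha,y\rangle$, which is all you actually use, since $\Z$ is torsion-free.
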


    \begin{proof}
      According to
      \cite{bou}~Ch.~{\sc vi},~$\mathrm{n}^{\mathrm{o}}$~1.1~Proposition~3,
      there is a $\calV$-invariant
      symmetric bihomomorphism $(\sbullet|\sbullet):~\calL\times\calL\to\Z$. 
      Since $(\calL|\calL)$
      is a non-trivial subgroup of $\Z$, it is possible to pass to a
      $\calV$-invariant symmetric bihomomorphism
      $(\sbullet,\sbullet):~\calL\times\calL\to\Z$ with
      $(\calL,\calL)=\Z$. It satisfies 
      $(\alpha|\beta)=0$ for
      $\alpha,\beta\in\Delta$ with $r_\alpha\perp r_\beta$.

      Denote the tensor product $\calL\otimes_\Z\Q$ 
      by $\calL_\Q$ and suppose
      \begin{math}
        \beta:~
        \calL\times\calL
        \to
        \Z
      \end{math}
      is a $\calV$-invariant bihomomorphism.
      It can be extended to a $\calV$-invariant
      $\Q$-bilinear form
      $\beta_\Q:~\calL_\Q\times\calL_\Q\to\Q$. In the same way, we
      extend $(\sbullet,\sbullet)$ to $(\sbullet,\sbullet)_\Q$. According to 
      \cite{bou}~Ch.~{\sc v},~$\mathrm{n}^{\mathrm{o}}$~2.2~Proposition~1, the form
      $\beta_\Q$  is
      a $\Q$-multiple of $(\sbullet,\sbullet)_\Q$. Since
      $1\in(\calL,\calL)$ and $\beta(\calL,\calL)\subseteq\Z$,
      we have $\beta=k(\sbullet,\sbullet)$ for some $k\in\Z$. If we set
      \begin{math}
        \phi:~\Z\to\Z,~z\mapsto kz
      \end{math} then the diagram in the lemma commutes.
    \end{proof}

    Set
    \begin{equation*}
      \calL\otimes_\calV\calL
      :=
      \frac{
        \calL\otimes\calL
      }{
        \big\langle (v.\lambda)\otimes(v.\mu)-\lambda\otimes\mu
        ~\big|~
        v\in\calV;~\lambda,\mu\in\calL
        \big\rangle
      }
    \end{equation*}
    and denote by
    \begin{math}
      \otimes_\calV:~
      \calL\times\calL
      \to
      \calL\otimes_\calV\calL
    \end{math}
    the associated $\calV$-invariant bihomomorphism.

    \begin{thm}\label{thm:otimesV}
      We have
      \begin{equation*}
        \calL\otimes_\calV\calL
        \cong
        \begin{cases}
          \Z\times\Z_2
          &
          \text{if $\Delta$ is of type $B_\ell(\ell\ge2)$ 
            or $BC_\ell(\ell\ge2)$}
          \\
          \Z
          &
          \text{otherwise},
        \end{cases}
      \end{equation*}
      the bihomomorphism
      \begin{math}
        \calL^2\to
        \calL\otimes_\calV\calL
      \end{math}
      is symmetric
      and
      \begin{math}
        \calL\otimes_\calV\calL
      \end{math}
      is generated by the set
      \begin{equation*}
        \{\alpha\otimes_\calV\alpha
        ~|~
        \alpha\in\Delta_\sh\}
        \cup
        \{\alpha\otimes_\calV(r_\beta.\alpha)
        ~|~
        \alpha\in\Delta_\sh,\beta\in\Delta\}.
      \end{equation*}
    \end{thm}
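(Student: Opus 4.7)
The idea is to combine the universal property of Lemma~\ref{lem:universeZBil} (which pins down the free part of $\calL\otimes_\calV\calL$) with a direct analysis of the torsion, treating the exceptional types $B_\ell,BC_\ell$ separately.

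\textbf{Reductions, generators, symmetry.} By Remark~\ref{rem:nonReduced} I may assume $\Delta$ is reduced. The last paragraph of the proof of Lemma~\ref{lem:nonsimplyId} records that $\calL$ is generated as an abelian group by $\Delta_\sh$; hence $\calL\otimes_\calV\calL$ is generated by tensors $\alpha\otimes_\calV\gamma$ with $\alpha,\gamma\in\Delta_\sh$. Expanding the $\calV$-invariance $(r_\beta.\lambda)\otimes_\calV(r_\beta.\mu)=\lambda\otimes_\calV\mu$ yields the key identity
\begin{equation*}
\langle\check\beta,\mu\rangle\,\lambda\otimes_\calV\beta+\langle\check\beta,\lambda\rangle\,\beta\otimes_\calV\mu=\langle\check\beta,\lambda\rangle\langle\check\beta,\mu\rangle\,\beta\otimes_\calV\beta.
\end{equation*}
To reduce an arbitrary $\alpha\otimes_\calV\gamma$ (with $\gamma=w.\alpha$) to the advertised generating set, I induct on the length of $w$ as a product of reflections, using this identity and $\calV$-invariance to eliminate factors. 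For symmetry, given $\alpha\ne\pm\gamma$ in $\Delta_\sh$, standard rank-two root-system combinatorics produces $v\in\calV$ with $v.\alpha=\epsilon\gamma,\,v.\gamma=\epsilon\alpha$ for a common $\epsilon\in\{\pm1\}$, which forces $\alpha\otimes_\calV\gamma=\gamma\otimes_\calV\alpha$; the cases $\alpha=\pm\gamma$ are trivial, and bilinearity extends symmetry to all of $\calL\times\calL$.

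\textbf{Group structure.} By Lemma~\ref{lem:universeZBil}, the symmetric bihomomorphism $(\sbullet,\sbullet)$ induces a surjection $\pi\colon\calL\otimes_\calV\calL\to\Z$. Tensoring with $\Q$ and invoking Schur's lemma for the absolutely irreducible $\calV$-module $\calL_\Q$ shows that $\calL\otimes_\calV\calL$ has $\Z$-rank~$1$, so $\ker\pi$ is its torsion subgroup. Outside types $B_\ell$ and $BC_\ell$ I claim $\ker\pi=0$, verified by repeatedly applying the key identity: in $C_\ell$ with $\ell\ge3$, the choice $\beta=\epsilon_2-\epsilon_3,\,\lambda=\epsilon_1-\epsilon_2,\,\mu=\epsilon_1+\epsilon_2$ combined with symmetry forces $(\epsilon_1-\epsilon_2)\otimes_\calV(\epsilon_1+\epsilon_2)=0$, and parallel arguments kill the candidate torsion for $A_\ell,D_\ell,E_i,F_4,G_2$. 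For $B_\ell$ and $BC_\ell(\ell\ge2)$, the map $\beta'\colon\calL\times\calL\to\Z_2$, $(\epsilon_i,\epsilon_j)\mapsto[i\ne j]\bmod 2$, is a well-defined $\calV$-invariant bihomomorphism independent of $(\sbullet,\sbullet)\bmod 2$; pairing it with $\pi$ yields a surjection $(\pi,\beta')\colon\calL\otimes_\calV\calL\to\Z\times\Z_2$, shown to be an isomorphism because the $r_{\epsilon_i}$-invariance relations force every element of $\ker\pi$ to be $2$-torsion and to be detected by $\beta'$.

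\textbf{Main obstacle.} The hardest part is the type-by-type torsion computation in the group-structure step—checking that $\ker\pi=0$ for $A_\ell,C_\ell(\ell\ge3),D_\ell,E_i,F_4,G_2$ while $\ker\pi\cong\Z_2$ for $B_\ell,BC_\ell(\ell\ge2)$. The underlying phenomenon is whether $\calV$ contains reflections that reduce tensors of orthogonal short roots via the displayed identity, in the spirit of the case analysis of Proposition~\ref{prop:calLEff}; a uniform proof would be desirable, but I expect to have to go one type at a time.
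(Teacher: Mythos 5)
Your overall architecture is essentially the paper's: reduce to the reduced case via Remark~\ref{rem:nonReduced}, use the universal invariant form of Lemma~\ref{lem:universeZBil} to account for the free part, and in type $B_\ell$ exhibit the extra $\Z_2$ by a $\calV$-invariant bihomomorphism built from the quotient $\calL/\calL_\eff$ of Proposition~\ref{prop:calLEff}. Your ``key identity'' is Lemma~\ref{lem:rootCombs} in disguise, and the Schur-lemma observation that the rank is one is a clean way to see that $\ker\pi$ is exactly the torsion.

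There is, however, a genuine gap exactly where you flag the ``main obstacle''. The content of the theorem is an \emph{upper} bound on $\calL\otimes_\calV\calL$: the paper proves, type by type (its Cases 1--5), that the group is generated by a single explicit element (respectively, in type $B_\ell$, by two elements subject to the relation $2\alpha\otimes_\calV\beta=-2\alpha\otimes_\calV\alpha$), after which the surjection onto $\Z$ immediately forces $\cong\Z$ (respectively a quotient of $\Z\times\Z_2$). You replace this by ``kill the candidate torsion elements by parallel arguments'', but showing that particular elements of $\ker\pi$ such as $(\epsilon_1-\epsilon_2)\otimes_\calV(\epsilon_1+\epsilon_2)$ vanish does not show $\ker\pi=0$ unless you also prove that such elements generate $\ker\pi$ --- and that is precisely the presentation you have deferred. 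The same issue recurs in type $B_\ell$, where ``the relations force every element of $\ker\pi$ to be $2$-torsion'' is asserted rather than derived, and in your reduction to the advertised generating set, where the ``induction on the length of $w$'' is not spelled out and is not a routine consequence of the key identity alone. A smaller imprecision: for orthogonal short roots the swapping element you need lives in the ambient Weyl group (e.g.\ a product of two sign changes in $D_\ell$), not in the rank-two subsystem they generate; the paper's Lemma~\ref{lem:tensorSym} instead argues on adjacent and non-adjacent simple roots and extends by bilinearity. In short, the plan is sound and parallel to the paper's, but the case-by-case computation that constitutes the actual proof is missing.
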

  
    Before we begin the proof of this theorem we will establish two
    preliminary results.

    \begin{lem}\label{lem:tensorSym}
      The map $\otimes_\calV$ is symmetric and
      $\alpha\otimes_\calV\beta=\beta\otimes_\calV\alpha=0$ for every pair of
      roots $\alpha$ and $\beta\in B$ with $\alpha\not\sim\beta$.
    \end{lem}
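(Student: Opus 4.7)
By bilinearity, both assertions of the lemma reduce to computations on pairs of simple roots, so I would organize the proof around the generating set $B$ of $\calL$. The plan is to first establish the vanishing statement on non-adjacent simple roots, and then deduce the full symmetry by combining this vanishing with a case analysis on adjacent simple roots.

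For the vanishing, fix $\alpha,\beta\in B$ with $\alpha\not\sim\beta$. Orthogonality says $\langle\check\alpha,\beta\rangle=0=\langle\check\beta,\alpha\rangle$, so $r_\alpha$ fixes $\beta$ and sends $\alpha\mapsto -\alpha$. The $\calV$-invariance of $\otimes_\calV$ then yields $\alpha\otimes_\calV\beta=(-\alpha)\otimes_\calV\beta$, whence $2(\alpha\otimes_\calV\beta)=0$. To kill this residual $2$-torsion I would invoke the connectedness of the Dynkin diagram: choose a simple root $\gamma\in B$ along a shortest path from $\alpha$ to $\beta$ that is adjacent to $\alpha$ but still orthogonal to $\beta$. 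Writing $r_\gamma.\alpha=\alpha+k\gamma$ with $k=-\langle\check\gamma,\alpha\rangle\in\{1,2,3\}$ and $r_\gamma.\beta=\beta$, the invariance of $\otimes_\calV$ under $r_\gamma$ produces the relation $k(\gamma\otimes_\calV\beta)=0$; iterating such identities along the Dynkin path — in the simply laced cases $k=1$ gives an immediate clean cancellation, and in the doubly/triply laced cases one combines the relation with the analogous one coming from the opposite end of the path — should reduce $\alpha\otimes_\calV\beta$ to $0$. The mirror identity $\beta\otimes_\calV\alpha=0$ then follows by running the same argument with the roles of $\alpha$ and $\beta$ interchanged.

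For the symmetry claim, bilinearity in each slot reduces it to $\alpha\otimes_\calV\beta=\beta\otimes_\calV\alpha$ for $\alpha,\beta\in B$. The case $\alpha=\beta$ is trivial; the case $\alpha\not\sim\beta$ follows since both sides are already zero by the first step. The only remaining case is $\alpha\sim\beta$ adjacent, in which $\alpha$ and $\beta$ span a rank-$2$ sub-root system $\Delta'$ of type $A_2$, $B_2$ or $G_2$ whose Weyl group $\calV'$ sits inside $\calV$. In each of these three cases a direct finite computation from Table~\ref{tab:rootSystems} presents the sub-quotient $\calL'\otimes_{\calV'}\calL'$ explicitly and forces $\alpha\otimes_{\calV'}\beta=\beta\otimes_{\calV'}\alpha$ as a consequence of $\calV'$-invariance under $r_\alpha$ and $r_\beta$. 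Pushing this identity forward under the natural map $\calL'\otimes_{\calV'}\calL'\to\calL\otimes_\calV\calL$ completes the symmetry statement.

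The main obstacle I expect is the vanishing step in the non-simply laced types, most delicately $B_\ell$ and $BC_\ell$ where the short simple root $\alpha_\ell$ admits no simply adjacent simple neighbour. There the naive application of a reflection across the double edge only yields a multiple of the target element rather than a clean single-element relation, so one has to combine several relations coming from different roots along the path $\alpha_1\sim\cdots\sim\alpha_\ell$ in a way that eliminates the $2$-torsion without accidentally reintroducing it through another bond of even multiplicity.
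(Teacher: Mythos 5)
Your symmetry step is essentially sound: reducing to pairs in $B$ by biadditivity, handling the diagonal and non-adjacent cases separately, and settling adjacent pairs inside the rank-$2$ subsystem is a workable (if slightly roundabout) version of what the paper does. The paper gets adjacency in one line: for adjacent $\alpha,\beta\in B$ at least one of $\langle\check\alpha,\beta\rangle$, $\langle\check\beta,\alpha\rangle$ equals $-1$, say the latter is $r_\alpha.\beta=\beta+\alpha$; applying $r_\alpha$ to both slots of $\beta\otimes_\calV(\beta+\alpha)$ gives $\beta\otimes_\calV(\beta+\alpha)=(\beta+\alpha)\otimes_\calV\beta$ and hence $\alpha\otimes_\calV\beta=\beta\otimes_\calV\alpha$, with no need to present $\calL'\otimes_{\calV'}\calL'$ explicitly.

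The vanishing step, however, has a genuine gap, and it is exactly where you sensed trouble. First, the auxiliary root you want --- $\gamma\in B$ adjacent to $\alpha$ but orthogonal to $\beta$ --- need not exist at all: in $A_3$ with $\alpha=\alpha_1$, $\beta=\alpha_3$, the only neighbour of $\alpha_1$ is $\alpha_2$, which is adjacent to $\alpha_3$; the same failure occurs for every pair at distance $2$ whose connecting vertex is their unique neighbour. Second, even when such a $\gamma$ exists, the relation it produces, $k(\gamma\otimes_\calV\beta)=0$, constrains $\gamma\otimes_\calV\beta$ rather than $\alpha\otimes_\calV\beta$, so it does not combine with $2(\alpha\otimes_\calV\beta)=0$ to give what you need, and the proposed iteration along the Dynkin path is never pinned down to a terminating computation. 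The repair is to reflect by $r_\beta$ itself rather than by an auxiliary reflection: since the connected Dynkin diagram has at most one non-simple edge, one of $\alpha,\beta$ --- say $\beta$ --- has a neighbour $\gamma\in B$ with $\langle\check\beta,\gamma\rangle=-1$. Then $r_\beta$ fixes $\alpha$ (as $\alpha\not\sim\beta$) and sends $\gamma$ to $\gamma+\beta$, so $\alpha\otimes_\calV\gamma=(r_\beta.\alpha)\otimes_\calV(r_\beta.\gamma)=\alpha\otimes_\calV\gamma+\alpha\otimes_\calV\beta$, forcing $\alpha\otimes_\calV\beta=0$ with coefficient one; the companion computation on $\gamma\otimes_\calV\alpha$ gives $\beta\otimes_\calV\alpha=0$. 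This sidesteps the $2$-torsion entirely, including in types $B_\ell$ and $C_\ell$.
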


    \begin{proof}
      Let $\alpha$ and $\beta$ be adjacent in $B$. Then 
      Table~\ref{tab:rootSystems} shows that
      \begin{math}
        \langle\check\beta,\alpha\rangle=-1
      \end{math}
      or
      \begin{math}
        \langle\check\alpha,\beta\rangle=-1.
      \end{math}
      Without loss of generality we assume the latter. Then
      \begin{eqnarray*}
        \beta\otimes_\calV(\beta+\alpha)
        &=&
        \beta\otimes_\calV(r_\alpha.\beta)
        ~=~
        (r_\alpha.\beta)\otimes_\calV\beta
        ~=~
        (\beta+\alpha)\otimes_\calV\beta.
      \end{eqnarray*}
      this entails
      $\alpha\otimes_\calV\beta=\beta\otimes_\calV\alpha$. 

      Now suppose $\alpha$ and $\beta\in B$ with $\alpha\not\sim\beta$. 
      This entails
      \begin{math}
        \langle\check\beta,\alpha\rangle
        =
        0.
      \end{math}
      Since the Dynkin graph is connected and all except possibly one
      edge are simple there must be a root
      $\gamma\in B$ such that $\alpha$ and $\gamma$ are simply
      adjacent,  or $\beta$ and $\gamma$ are simply adjacent. 
      We may assume the latter. So we have
      \begin{math}
        \langle\check\beta,\gamma\rangle
        =
        -1.
      \end{math}
      Then
      \begin{eqnarray*}
        \alpha\otimes_\calV\gamma
        &=&
        (r_\beta.\alpha)\otimes_\calV(r_\beta.\gamma)
        ~=~
        \alpha\otimes_\calV
        (\gamma+\beta)
        \\
        \gamma\otimes_\calV\alpha
        &=&
        (r_\beta.\gamma)\otimes_\calV(r_\beta.\alpha)
        ~=~
        (\gamma+\beta)\otimes_\calV\alpha,
      \end{eqnarray*}
      so, $\alpha\otimes_\calV\beta=\beta\otimes_\calV\alpha=0$. 
    \end{proof}
    \begin{lem}\label{lem:rootCombs}
      Let $\alpha$, $\beta$ and $\gamma$ be roots.
      \begin{enumerate}
  \item If
        \begin{math}
          \alpha\not\sim\gamma
        \end{math}
        then
        \begin{math}
          \langle\check\beta,\gamma\rangle\alpha\otimes_\calV\beta
          =
          \langle\check\beta,\alpha\rangle\beta\otimes_\calV\gamma.
        \end{math}
  \item
        \begin{math}
          2\alpha\otimes_\calV\beta
          =
          \langle\check\alpha,\beta\rangle\alpha\otimes_\calV\alpha.
        \end{math}
      \end{enumerate}
    \end{lem}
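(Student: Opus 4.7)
The plan is to prove both parts by exploiting the defining $\calV$-invariance of $\otimes_\calV$, namely $(v.\lambda)\otimes_\calV(v.\mu)=\lambda\otimes_\calV\mu$, combined with the standard reflection formula $r_\delta.\eta=\eta-\langle\check\delta,\eta\rangle\delta$.

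I would tackle part (ii) first because it is self-contained and will be needed in part (i). Applying $r_\alpha$-invariance to $\alpha\otimes_\calV\beta$ gives
\begin{align*}
  \alpha\otimes_\calV\beta
  &=(r_\alpha.\alpha)\otimes_\calV(r_\alpha.\beta)
  =(-\alpha)\otimes_\calV\bigl(\beta-\langle\check\alpha,\beta\rangle\alpha\bigr)\\
  &=-\alpha\otimes_\calV\beta+\langle\check\alpha,\beta\rangle\,\alpha\otimes_\calV\alpha,
\end{align*}
and rearranging yields $2\alpha\otimes_\calV\beta=\langle\check\alpha,\beta\rangle\,\alpha\otimes_\calV\alpha$. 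No hypothesis is needed for this step.

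For part (i) my plan is to apply $r_\beta$-invariance to $\alpha\otimes_\calV\gamma$. Expanding the bihomomorphism in the four summands
\begin{equation*}
  \alpha\otimes_\calV\gamma
  =\bigl(\alpha-\langle\check\beta,\alpha\rangle\beta\bigr)
   \otimes_\calV
   \bigl(\gamma-\langle\check\beta,\gamma\rangle\beta\bigr),
\end{equation*}
cancelling the common term $\alpha\otimes_\calV\gamma$ on both sides, one obtains the identity
\begin{equation*}
  \langle\check\beta,\gamma\rangle\,\alpha\otimes_\calV\beta
  +\langle\check\beta,\alpha\rangle\,\beta\otimes_\calV\gamma
  =\langle\check\beta,\alpha\rangle\langle\check\beta,\gamma\rangle\,\beta\otimes_\calV\beta.
\end{equation*}
Applying part (ii) to the pair $(\beta,\gamma)$ rewrites $\langle\check\beta,\gamma\rangle\,\beta\otimes_\calV\beta$ as $2\,\beta\otimes_\calV\gamma$, and substituting this on the right gives exactly the claimed equality after subtracting $\langle\check\beta,\alpha\rangle\,\beta\otimes_\calV\gamma$ from both sides.

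I expect no serious obstacle here: both statements reduce to a direct expansion using $\calV$-invariance under a single reflection, with part (i) leveraging part (ii). I note that my calculation does not actually require the hypothesis $\alpha\not\sim\gamma$, so either the proof is genuinely a one-line consequence of (ii) and reflection invariance, or the condition is stated only to restrict attention to the cases in which this lemma will be applied in later sections (perhaps together with Lemma~\ref{lem:tensorSym} to deduce vanishing consequences). If the intended reading of $\alpha\not\sim\gamma$ is that $\alpha$ and $\gamma$ lie in $B$ and are non-adjacent, I would remark that Lemma~\ref{lem:tensorSym} then makes both sides vanish independently, which would be an even shorter route; but the reflection-invariance argument above covers the general case uniformly.
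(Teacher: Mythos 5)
Your proof of part (ii) is verbatim the paper's argument. For part (i) you take a genuinely different route. The paper proves (i) by the telescoping chain
\begin{equation*}
  \langle\check\beta,\gamma\rangle\,\alpha\otimes_\calV\beta
  \;=\;-\alpha\otimes_\calV(r_\beta.\gamma)
  \;=\;-(r_\beta.\alpha)\otimes_\calV\gamma
  \;=\;\langle\check\beta,\alpha\rangle\,\beta\otimes_\calV\gamma,
\end{equation*}
and both the first and last equalities silently discard a term $\alpha\otimes_\calV\gamma$; this is legitimate precisely because the hypothesis $\alpha\not\sim\gamma$ (which, as the relation $\sim$ is only defined on $B$, implicitly places $\alpha$ and $\gamma$ among the non-adjacent simple roots) combines with Lemma~\ref{lem:tensorSym} to give $\alpha\otimes_\calV\gamma=0$. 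So the hypothesis is genuinely used in the paper's computation, and the paper's (i) leans on Lemma~\ref{lem:tensorSym}. Your expansion of $(r_\beta.\alpha)\otimes_\calV(r_\beta.\gamma)$ instead keeps the term $\alpha\otimes_\calV\gamma$, cancels it identically against the left-hand side, and then removes $\beta\otimes_\calV\beta$ via part (ii) applied to the pair $(\beta,\gamma)$; the bookkeeping is correct and yields the stated identity. This costs one extra substitution but buys a strictly stronger statement: it holds for arbitrary roots $\alpha,\beta,\gamma$ with no adjacency hypothesis and no appeal to Lemma~\ref{lem:tensorSym}. Your closing speculation about the role of the condition $\alpha\not\sim\gamma$ is thus resolved: it is present because the paper's own proof needs $\alpha\otimes_\calV\gamma=0$, not because the conclusion fails without it, and note that one cannot replace it by mere orthogonality of general roots, since $\alpha\otimes_\calV\gamma$ need not vanish for orthogonal non-simple roots (this is exactly the $\Z_2$ obstruction visible in type $B_\ell$ in Theorem~\ref{thm:otimesV}).
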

    \begin{proof}
      Ad (i):
      \begin{math}
        \begin{array}[t]{rcl}
          \langle\check\beta,\gamma\rangle\alpha\otimes_\calV\beta
          &=&
          -\alpha\otimes_\calV(\gamma-\langle\check\beta,\gamma\rangle\beta)
          ~=~
          -\alpha\otimes_\calV(r_\beta.\gamma)
          \\
          &=&
          -(r_\beta.\alpha)\otimes_\calV\gamma
          ~=~
          -(\alpha-\langle\check\beta,\alpha\rangle\beta)\otimes_\calV\gamma
          \\
          &=&
          \langle\check\beta,\alpha\rangle\beta\otimes_\calV\gamma.
        \end{array}
      \end{math}

      Ad (ii):
      \begin{math}
        \begin{array}[t]{rcl}
        \alpha\otimes_\calV\beta
        &=&
        (r_\alpha.\alpha)\otimes_\calV(r_\alpha.\beta)
        ~=~
        -\alpha\otimes_\calV(\beta-\langle\check\alpha,\beta\rangle\alpha).
      \end{array}
    \end{math}
  \end{proof}

    Now we prove the theorem.
    \begin{proof}
      Due to Remark~\ref{rem:nonReduced} we my assume that the root system
      $\Delta$ is reduced.
      Note that with the map $(\sbullet|\sbullet)$ from 
      Lemma~\ref{lem:universeZBil} and the universal property of
      $\otimes_\calV$ we have the following commuting diagram:
      \begin{equation*}
        \xymatrix{
          \calL^2
          \ar[rr]
          \ar[rrd]_{(\sbullet|\sbullet)}
          &&
          \calL\otimes_\calV\calL
          \ar[d]
          \\
          &&
          \Z.
        }
      \end{equation*}
      So in order to prove
      $\calL\otimes_\calV\calL\cong\Z$, it suffices to show that
      $\calL\otimes_\calV\calL$ is cyclic.

      {\bf Case 1:}~~ The root system $\Delta$ is of type $A_1$. Then
      $\calL\cong\Z$, generated by the single root $\alpha\in B$. So
      $\calL\otimes_\calV\calL$ 
      is 
      generated by
      $\alpha\otimes_\calV\alpha$. 

      {\bf Case 2:}~~ The root system $\Delta$ is simply laced 
      and of rank $\ell\ge2$,
      i.e. of one of the types 
      $A_\ell(\ell\ge2)$, 
      $D_\ell(\ell\ge4)$, 
      $E_6$, $E_7$ and $E_8$.
      Let $\alpha$ and $\alpha'$ be adjacent roots in $B$.
      Lemma~\ref{lem:rootCombs}~(ii) implies
      $\alpha\otimes_\calV\alpha=\alpha'\otimes_\calV\alpha'=-2\alpha\otimes_\calV\alpha'$. 
      Moreover, if $\alpha''\in B$ is adjacent to $\alpha'$ then
      $\alpha\not\sim\alpha''$ since the Dynkin diagram contains no loops.
      Lemma~\ref{lem:rootCombs}~(i) implies
      $\alpha\otimes_\calV\alpha'=\alpha'\otimes_\calV\alpha''$. Since the Dynkin
      diagram is connected and in view of Lemma~\ref{lem:tensorSym} we
      obtain the following for $\gamma$ and $\delta\in B$:
      \begin{equation*}
        \gamma\otimes_\calV\delta=
        \begin{cases}
          \alpha\otimes_\calV\alpha'
          &
          \text{if $\gamma$ and $\delta$ are adjacent}
          \\
          -2\alpha\otimes_\calV\alpha'
          &
          \text{if $\gamma=\delta$}
        \end{cases}
      \end{equation*}
      Again in view of
      Lemma~\ref{lem:tensorSym},
      this proves that $\calL\otimes_\calV\calL$ is generated by
      \begin{eqnarray*}
        \alpha'\otimes_\calV(r_\alpha.\alpha')
        &=&
        \alpha'\otimes_\calV(\alpha'+\alpha)
        ~=~
        \alpha'\otimes_\calV\alpha'
        +
        \alpha'\otimes_\calV\alpha
        \\
        &=&
        -2\alpha\otimes_\calV\alpha'
        +
        \alpha'\otimes_\calV\alpha
        ~=~
        -\alpha\otimes_\calV\alpha'.
      \end{eqnarray*}
      
      In the following cases there will be two different root
      lengths. Let $\alpha$ and $\beta$ be the unique pair of adjacent
      roots in
      $B$ such that $\alpha$ is short and $\beta$ is long.

      {\bf Case 3:}~~ There is at least one short root $\alpha'\in B$ other
      than $\alpha$, i.e. the root system $\Delta$ is of type
      $C_\ell(\ell\ge3)$ or $F_4$. Note that we have
      \begin{math}
        \langle\check\alpha,\beta\rangle=-2
      \end{math}
      and 
      \begin{math}
        \langle\check\beta,\alpha\rangle=-1.
      \end{math}      
      If $\gamma$ and
      $\delta\in B$ then
      \begin{equation*}
        \gamma\otimes_\calV\delta=
        \left\{
        \begin{array}{clr}
          2\alpha\otimes_\calV\alpha'
          &
          \text{if $\gamma=\alpha$ and $\delta=\beta$}
          &
          (\text{by Lemma~\ref{lem:rootCombs}~(ii)}),
          \\
          \alpha\otimes_\calV\alpha'
          &
          \text{if $\gamma$ and $\delta\in\Delta_\sh$ and
            $\gamma\sim\delta$}
          &
          (\text{by Lemma~\ref{lem:rootCombs}~(ii)}),
          \\
          2\alpha\otimes_\calV\alpha'
          &
          \text{if $\gamma$ and $\delta\in\Delta_\lng$ and
            $\gamma\sim\delta$}
          &
          (\text{by Lemma~\ref{lem:rootCombs}~(ii)}),
          \\
          -2\alpha\otimes_\calV\alpha'
          &
          \text{if $\gamma=\delta\in\Delta_\sh$}
          &
          (\text{by Lemma~\ref{lem:rootCombs}~(i)}),
          \\
          -4\alpha\otimes_\calV\alpha'
          &
          \text{if $\gamma=\delta\in\Delta_\lng$}
          &
          (\text{by Lemma~\ref{lem:rootCombs}~(i)}).
        \end{array}
      \right.
      \end{equation*}
      As in the previous case $\calL\otimes_\calV\calL$ is generated
      by
      $\alpha'\otimes_\calV(r_\alpha.\alpha')$.

      {\bf Case 4:}~~ The root system $\Delta$ is of type $G_2$. Then we
      obtain
      \begin{equation*}
        \beta\otimes_\calV\beta=-2\alpha\otimes_\calV\beta
        ~~~\text{and}~~~
        3\alpha\otimes_\calV\alpha=-2\alpha\otimes_\calV\beta.
      \end{equation*}
      The group $\calL\otimes_\calV\calL$ is generated by the set
      $\{\alpha\otimes_\calV\beta,\alpha\otimes_\calV\alpha\}$. Thus
      the group $\calL\otimes_\calV\calL$ is a quotient of the
      group
      \begin{math}
        \Z^2/\big(\Z(2,3)\big)\cong\Z.
      \end{math}
      This is a cyclic group. Moreover, since
      \begin{equation*}
        \alpha\otimes_\calV\beta
        ~=~
        \alpha\otimes_\calV(r_\beta.\alpha)
        -
        \alpha\otimes_\calV\alpha
      \end{equation*}
      the group is generated by the set 
      $\{\alpha\otimes_\calV(r_\beta.\alpha),
      \alpha\otimes_\calV\alpha\}$.

      {\bf Case 5:}~~ The root system $\Delta$ is of type
      $B_\ell(\ell\ge2)$. 
      If $\gamma$ and
      $\delta\in B$ then
      \begin{equation*}
        \gamma\otimes_\calV\delta=
        \begin{cases}
          \alpha\otimes_\calV\beta
          &
          \text{if $\gamma$ and $\delta$ are adjacent and
            both of them are long}
          \\
          -2\alpha\otimes_\calV\beta
          &
          \text{if $\gamma=\delta$ and $\gamma$ is long}.
        \end{cases}
      \end{equation*}
      The group $\calL\otimes_\calV\calL$ 
      is generated by 
      $\{\alpha\otimes_\calV\beta,\alpha\otimes_\calV\alpha\}$ and satisfies the
      relation $2\alpha\otimes_\calV\beta=-2\alpha\otimes_\calV\alpha$. 
      As in the previous case, the set
      $\{\alpha\otimes_\calV(r_\beta.\alpha),
      \alpha\otimes_\calV\alpha\}$ is a generating set.
      But this time, the group $\calL\otimes_\calV\calL$ 
      is a quotient of the group
      \begin{math}
        \Z^2/\big(\Z(2,2)\big)\cong\Z\times\Z_2.
      \end{math}
      To show that $\calL\otimes_\calV\calL$ is actually
      isomorphic to this group, we will construct a surjective
      $\calV$-invariant bihomomorphism
      $\calL\times\calL\to\Z\times\Z_2$. Note that the considerations
      above imply $-(\alpha|\beta)=(\alpha|\alpha)=1$.

      Due to
      Proposition~\ref{prop:calLEff} we have
      $\calL/\calL_\eff\cong\Z_2$. Denote the quotient
      homomorphism 
      by $\calL\to\Z_2,~\lambda\mapsto\overline\lambda$. Set
      \begin{equation*}
        b:~
        \calL^2\to\Z\times\Z_2,~
        (\lambda,\mu)
        \mapsto
        \big((\lambda|\mu),\overline\lambda\overline\mu\big).
      \end{equation*}
      This is a $\calV$-invariant bihomomorphism and it is surjective
      due to
      \begin{lastdisplay}
        \begin{align*}
          -b(\alpha,\beta)=(1,0)
          ~~~\text{and}~~~
          b(\alpha,\alpha+\beta)=(0,1).
        \end{align*}
      \end{lastdisplay}
    \end{proof}

    Set
    \begin{equation*}
      \calL\boxtimes\calL
      :=
      \frac{
        \calL\otimes_\calV\calL
      }{
        \big\langle \alpha\otimes_\calV\beta
        ~\big|~
        \alpha,\beta\in\Delta_\sh\text{~with~}r_\alpha\perp r_\beta
        \big\rangle
      }
    \end{equation*}
    and denote by
    \begin{math}
      \boxtimes:~
      \calL\times\calL
      \to
      \calL\boxtimes\calL
    \end{math}
    the associated $\calV$-invariant bihomomorphism.

    \begin{cor}\label{cor:boxtimes}
      We have
      \begin{math}
        \calL\boxtimes\calL
        \cong
        \Z
      \end{math}
      and
      \begin{math}
        \calL\boxtimes\calL
      \end{math}
      is generated by the set
      \begin{equation*}
        \{\alpha\boxtimes\alpha
        ~|~
        \alpha\in\Delta_\sh\}
        \cup
        \{\alpha\boxtimes(r_\beta.\alpha)
        ~|~
        \alpha\in\Delta_\sh,\beta\in\Delta\}.
      \end{equation*}
      Moreover $\alpha\boxtimes\beta=0$ for
      $\alpha,\beta\in\Delta$ with $r_\alpha\perp r_\beta$.      
    \end{cor}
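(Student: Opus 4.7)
\begin{proof*}[Proof proposal]
The plan is to leverage Theorem~\ref{thm:otimesV} together with the universal $\calV$-invariant bihomomorphism $(\sbullet|\sbullet):~\calL\times\calL\to\Z$ provided by Lemma~\ref{lem:universeZBil}. Since $(\alpha|\beta)=0$ whenever $r_\alpha\perp r_\beta$, and in particular on the subgroup by which we quotient to form $\calL\boxtimes\calL$, the form $(\sbullet|\sbullet)$ descends to a surjective group homomorphism $\calL\boxtimes\calL\twoheadrightarrow\Z$. The task reduces to showing that this surjection is injective, and then the statement $\alpha\boxtimes\beta=0$ for $r_\alpha\perp r_\beta$ becomes immediate since it translates into $(\alpha|\beta)=0$.

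First I would dispose of the cases in which Theorem~\ref{thm:otimesV} gives $\calL\otimes_\calV\calL\cong\Z$: then $\calL\boxtimes\calL$ is a quotient of a cyclic group, and a surjection between $\Z$ and a cyclic quotient is forced to be an isomorphism. The generating set described in the corollary is inherited directly from the one of Theorem~\ref{thm:otimesV} under the quotient map.

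The remaining case is $\Delta$ of type $B_\ell$ or $BC_\ell$ with $\ell\ge 2$, where Theorem~\ref{thm:otimesV} produces $\calL\otimes_\calV\calL\cong\Z\times\Z_2$. Here one must show that the torsion subgroup dies in $\calL\boxtimes\calL$. Using the standard realization of $B_\ell$ from \cite{bou} (which also carries over to $BC_\ell$ by Remark~\ref{rem:nonReduced}), the short roots include $\epsilon_1$ and $\epsilon_2$, whose reflections act on mutually orthogonal coordinates and therefore satisfy $r_{\epsilon_1}\perp r_{\epsilon_2}$. From the construction of the isomorphism $\calL\otimes_\calV\calL\cong\Z\times\Z_2$ in Case~5 of the proof of Theorem~\ref{thm:otimesV}, the torsion component is detected by $b(\lambda,\mu)=\bigl((\lambda|\mu),\overline\lambda\,\overline\mu\bigr)$, where $\overline\sbullet:\calL\to\calL/\calL_\eff\cong\Z_2$ is the map of Proposition~\ref{prop:calLEff}, which sends short roots to $1$. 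A short calculation gives $b(\epsilon_1,\epsilon_2)=(0,1)$, which is precisely the generator of the $\Z_2$-summand. Since $\epsilon_1\otimes_\calV\epsilon_2$ is killed in $\calL\boxtimes\calL$, the torsion disappears and $\calL\boxtimes\calL$ becomes a cyclic quotient that surjects onto $\Z$, hence $\calL\boxtimes\calL\cong\Z$.

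The main obstacle I anticipate is this last step: identifying an explicit pair of perpendicular short roots whose tensor represents the non-trivial torsion class in $\calL\otimes_\calV\calL$. Everything else is then bookkeeping: transferring the generating set from Theorem~\ref{thm:otimesV}, and deducing the vanishing $\alpha\boxtimes\beta=0$ for arbitrary perpendicular $\alpha,\beta\in\Delta$ from the identification of $\calL\boxtimes\calL$ with $\Z$ via $(\sbullet|\sbullet)$.
\end{proof*}
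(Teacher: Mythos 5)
Your proposal is correct and follows essentially the same route as the paper: descend the universal form $(\sbullet|\sbullet)$ to a surjection $\calL\boxtimes\calL\to\Z$, dispose of the cyclic cases immediately, and in the $B_\ell/BC_\ell$ case kill the $\Z_2$-torsion by exhibiting a perpendicular pair of short roots whose class is the torsion generator. Your pair $(\epsilon_1,\epsilon_2)$ is exactly the paper's pair $(\alpha,\,\omega=r_\beta.\alpha=\alpha+\beta)$ written in the coordinate realization, and the detection via $b(\epsilon_1,\epsilon_2)=(0,1)$ matches the paper's computation $b(\alpha,\alpha+\beta)=(0,1)$.
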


    \begin{proof}
      According to Lemma~\ref{lem:universeZBil} 
      we have $(\alpha|\beta)=0$ for
      $\alpha,\beta\in\Delta$ with $r_\alpha\perp r_\beta$. So there is
      a homomorphism $\calL\boxtimes\calL\to\Z$ such that
      the following diagram is commutative:
      \begin{equation*}
        \xymatrix{
          &&
          \calL\otimes_\calV\calL
          \ar[d]
          \\
          \calL^2
          \ar[rru]
          \ar[rr]
          \ar[rrd]_{(\sbullet|\sbullet)}
          &&
          \calL\boxtimes\calL
          \ar[d]
          \\
          &&
          \Z
        }
      \end{equation*}
      We have to revisit
      Case~5 in the proof of the previous theorem. 
      Consider the root
      $\omega:=r_\beta.\alpha=\alpha+\beta$. Note that
      \begin{equation*}
        \langle\check\alpha,\omega\rangle
        ~=~
        \langle\check\alpha,\alpha\rangle
        +
        \langle\check\alpha,\beta\rangle
        ~=~
        0
      \end{equation*}
      This entails 
      $\langle\check\omega,\alpha\rangle=0$, so
      $[r_\alpha,r_{\omega}]=1$. This entails the relation
      \begin{equation*}
        0
        ~=~
        \alpha\boxtimes\omega
        ~=~
        \alpha\boxtimes\alpha
        +
        \alpha\boxtimes\beta.
      \end{equation*}
      This means that
      $\calL\boxtimes\calL$ is cyclic.
    \end{proof}

    Set
    \begin{equation*}
      \calL\otimes_\calV\check\calL
      :=
      \frac{
        \calL\otimes\check\calL
      }{
        \big\langle (v.\lambda)\otimes(v.\mu)-\lambda\otimes\mu
        ~\big|~
        v\in\calV,~\lambda\in\calL,~\mu\in\check\calL
        \big\rangle
      }
    \end{equation*}
    and denote by
    \begin{math}
      \otimes_\calV:~
      \calL\times\check\calL
      \to
      \calL\otimes_\calV\check\calL
    \end{math}
    the associated $\calV$-invariant bihomomorphism.

    \begin{thm}\label{thm:otimesVCheck}
      We have
      \begin{equation*}
        \calL\otimes_\calV\check\calL
        \cong
        \begin{cases}
          \Z\times\Z_2
          &
          \text{if $\Delta$ is of type $BC_\ell(\ell\ge2)$}
          \\
          \Z
          &
          \text{otherwise}.
        \end{cases}
      \end{equation*}
    \end{thm}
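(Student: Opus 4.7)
The plan is to mirror the structure of the proof of Theorem~\ref{thm:otimesV} and split into cases by the type of $\Delta$. In the simply laced reduced cases ($A_\ell$, $D_\ell$, $E_{6,7,8}$) and in every non-reduced case ($BC_\ell$), Remark~\ref{rem:calLId} provides a $\calV$-equivariant group isomorphism $\calL\to\check\calL$. Transporting along this identification gives $\calL\otimes_\calV\check\calL \cong \calL\otimes_\calV\calL$, and Theorem~\ref{thm:otimesV} immediately yields the stated answer in each of these cases: $\Z$ for the simply laced types and for $BC_1$, and $\Z\times\Z_2$ for $BC_\ell(\ell\ge 2)$.

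It then remains to treat the reduced non-simply laced types $B_\ell(\ell\ge 2)$, $C_\ell(\ell\ge 3)$, $F_4$, $G_2$, for all of which the claim is $\calL\otimes_\calV\check\calL \cong \Z$. To produce the $\Z$, I would use the natural pairing: the assignment $(\lambda,\mu)\mapsto\langle\mu,\lambda\rangle$ is a $\calV$-invariant bihomomorphism $\calL\times\check\calL\to\Z$ and therefore factors through a homomorphism $\calL\otimes_\calV\check\calL\to\Z$; surjectivity is immediate from $\langle\check\alpha,\alpha\rangle=2$ together with the existence of simply-adjacent basis pairs giving a value of $-1$. It thus suffices to prove that $\calL\otimes_\calV\check\calL$ is cyclic, for then the surjection onto $\Z$ forces it to be exactly $\Z$.

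Cyclicity in the four remaining types would be shown by a case-by-case computation analogous to Cases~1--5 in the proof of Theorem~\ref{thm:otimesV}. Applying $r_\gamma$ and $r_\delta$ to $\gamma\otimes_\calV\check\delta$ yields the identities
\begin{equation*}
  2\gamma\otimes_\calV\check\delta \;=\; \langle\check\gamma,\delta\rangle\,\gamma\otimes_\calV\check\gamma
  \qquad\text{and}\qquad
  2\gamma\otimes_\calV\check\delta \;=\; \langle\check\delta,\gamma\rangle\,\delta\otimes_\calV\check\delta,
\end{equation*}
the counterparts of Lemma~\ref{lem:rootCombs}, and for $\alpha,\gamma\in B$ non-adjacent (so $r_\alpha\perp r_\gamma$) the same computation shows that $\alpha\otimes_\calV\check\gamma$ is $2$-torsion. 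Combining these with the Dynkin diagram data of Table~\ref{tab:rootSystems}, each basis generator $\alpha\otimes_\calV\check\gamma$ with $\alpha,\gamma\in B$ can be expressed as an integer multiple of a single fixed element, say $\alpha_0\otimes_\calV\check\alpha_0$ for a short basis root $\alpha_0$. The main obstacle is the absence of symmetry: Lemma~\ref{lem:tensorSym} is not available here because the two factors of the tensor product are distinct $\calV$-modules, so the orthogonality argument used there must be replaced by the $2$-torsion observation above, and the arguments become particularly delicate in types $B_\ell$ and $C_\ell$, where $\calL\otimes_\calV\calL$ respectively $\check\calL\otimes_\calV\check\calL$ carries nontrivial $2$-torsion by Theorem~\ref{thm:otimesV}; one must verify by explicit calculation that this torsion does not survive in the mixed product $\calL\otimes_\calV\check\calL$.
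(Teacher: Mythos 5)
Your reduction of the simply laced reduced cases and the non-reduced cases to Theorem~\ref{thm:otimesV} via the identification of Remark~\ref{rem:calLId} is exactly the paper's first step, and your way of producing the copy of $\Z$ in the remaining types (factoring the $\calV$-invariant pairing through $\calL\otimes_\calV\check\calL$) is a legitimate variant of the paper's argument, which instead uses $k_\Delta\check\calL\subseteq\calL$ to find $k_\Delta\check\calL\otimes_\calV\check\calL\cong\Z$ inside the image of $\calL\otimes_\calV\check\calL\to\check\calL\otimes_\calV\check\calL$. The cyclicity step, however, contains a concrete error. The single generator cannot be $\alpha_0\otimes_\calV\check\alpha_0$ for a short basis root $\alpha_0$: the correct form of your first identity is $2\gamma\otimes_\calV\check\delta=\langle\check\delta,\gamma\rangle\,\gamma\otimes_\calV\check\gamma$ (the coefficient is $\langle\check\delta,\gamma\rangle$, not $\langle\check\gamma,\delta\rangle$), so for $\alpha$ short adjacent to $\beta$ long one gets $\alpha\otimes_\calV\check\alpha=-2\alpha\otimes_\calV\check\beta$; under your own pairing homomorphism $\alpha_0\otimes_\calV\check\alpha_0$ maps to $2$ while $\alpha\otimes_\calV\check\beta$ maps to $-1$, so the latter is not an integer multiple of the former. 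The generator has to be the mixed element $\alpha\otimes_\calV\check\beta$ with $\alpha$ short and $\beta$ long adjacent, which is what the paper reduces every basis product to.

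The second, more serious issue is the one you flag but do not resolve: knowing only that $\alpha\otimes_\calV\check\gamma$ is $2$-torsion for non-adjacent $\alpha,\gamma\in B$ does not yield cyclicity, since a group generated by multiples of one element together with $2$-torsion elements may well contain a $\Z_2$ summand --- which is exactly what must be excluded in types $B_\ell$ and $C_\ell$. Moreover, your stated reason for abandoning the Lemma~\ref{lem:tensorSym} argument is not valid: the symmetry and vanishing statements do survive in the mixed setting. For $\alpha,\beta\in B$ of equal length one has $\langle\check\alpha,\beta\rangle=\langle\check\beta,\alpha\rangle=-1$ and the same computation gives $\alpha\otimes_\calV\check\beta=\beta\otimes_\calV\check\alpha$; for $\alpha\not\sim\beta$ one chooses $\gamma\in B$ simply adjacent to $\alpha$ or to $\beta$ (possible since the Dynkin diagram is connected and all but one edge are simple) and applies $r_\beta$ to $\alpha\otimes_\calV\check\gamma$ and to $\gamma\otimes_\calV\check\alpha$ to conclude $\alpha\otimes_\calV\check\beta=\beta\otimes_\calV\check\alpha=0$. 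This is precisely the paper's preliminary lemma preceding Lemma~\ref{lem:rootCombsCheck}, and without it (or an equivalent explicit computation killing the potential $2$-torsion) your argument does not establish that $\calL\otimes_\calV\check\calL$ is cyclic.
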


    We will start with versions of Lemma~\ref{lem:rootCombs} and
    Lemma~\ref{lem:tensorSym} appropriate
    for elements of $\calL\otimes_\calV\check\calL$.
    \begin{lem}
      Suppose $\alpha$ and $\beta\in B$. Then the following
      are satisfied:
      \begin{enumerate}
  \item If $\alpha$ and $\beta$ have the same length then
        $\alpha\otimes_\calV\check\beta
        =
        \beta\otimes_\calV\check\alpha.$ 
  \item If $\alpha\not\sim\beta$ then 
        $\alpha\otimes_\calV\check\beta
        =
        \beta\otimes_\calV\check\alpha
        =
        0$.
      \end{enumerate}
    \end{lem}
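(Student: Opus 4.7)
The strategy is to mimic the proofs of Lemma~\ref{lem:tensorSym} and Lemma~\ref{lem:rootCombs}, adapted to the mixed tensor $\calL\otimes_\calV\check\calL$ using that the coroot bijection $\alpha\mapsto\check\alpha$ is $\calV$-equivariant and that $r_\alpha$ acts on $\check\calL$ via $r_\alpha.\mu=\mu-\langle\mu,\alpha\rangle\check\alpha$. I would prove (ii) first and then deduce (i).

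For (ii), assume $\alpha\not\sim\beta$. As in the proof of Lemma~\ref{lem:tensorSym}, the connectedness of the Dynkin diagram and the fact that it has at most one non-simple edge yield a root $\gamma\in B$ that is simply adjacent to $\alpha$ or to $\beta$; without loss of generality assume $\gamma\sim\beta$ simply. Then $\gamma$ and $\beta$ have equal length, $\langle\check\beta,\gamma\rangle=-1$, so $r_\beta.\gamma=\gamma+\beta$ and $r_\beta.\check\gamma=\check\gamma+\check\beta$, while $\alpha\not\sim\beta$ forces $r_\beta.\alpha=\alpha$ and $r_\beta.\check\alpha=\check\alpha$. The $\calV$-invariance of $\otimes_\calV$ then gives
\begin{equation*}
\alpha\otimes_\calV\check\gamma
=(r_\beta.\alpha)\otimes_\calV(r_\beta.\check\gamma)
=\alpha\otimes_\calV(\check\gamma+\check\beta),
\end{equation*}
whence $\alpha\otimes_\calV\check\beta=0$. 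The analogous computation starting from $\gamma\otimes_\calV\check\alpha=(r_\beta.\gamma)\otimes_\calV(r_\beta.\check\alpha)=(\gamma+\beta)\otimes_\calV\check\alpha$ yields $\beta\otimes_\calV\check\alpha=0$.

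For (i), assume $\alpha$ and $\beta$ have the same length. If $\alpha\not\sim\beta$, the claim follows from (ii). Otherwise $\alpha\sim\beta$, and since any non-simple edge of an irreducible Dynkin diagram joins roots of different lengths, the edge must be simple. Hence $\langle\check\alpha,\beta\rangle=\langle\check\beta,\alpha\rangle=-1$, $r_\alpha.\beta=\beta+\alpha$ and $r_\alpha.\check\beta=\check\beta+\check\alpha$. Then
\begin{equation*}
\beta\otimes_\calV\check\beta+\beta\otimes_\calV\check\alpha
=\beta\otimes_\calV(r_\alpha.\check\beta)
=(r_\alpha.\beta)\otimes_\calV\check\beta
=\beta\otimes_\calV\check\beta+\alpha\otimes_\calV\check\beta,
\end{equation*}
and cancelling $\beta\otimes_\calV\check\beta$ finishes the proof.

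I do not foresee a genuine obstacle: the argument is a direct transposition of the $\calL\otimes_\calV\calL$ reasoning, and the only input beyond it is the explicit action of $r_\alpha$ on $\check\calL$ recorded in Section~6, which immediately provides the coroot analogues of the identities used for $\calL\otimes_\calV\calL$. The one subtlety worth flagging is the need, in both parts, that $\gamma$ (or the single neighbour $\beta$) be connected to the other root by a \emph{simple} edge; this is precisely the place where the classification of irreducible Dynkin diagrams enters, exactly as in Lemma~\ref{lem:tensorSym}.
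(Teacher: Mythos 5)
Your proof is correct and follows essentially the same route as the paper: part (ii) is verbatim the paper's argument (produce a $\gamma\in B$ simply adjacent to one of the two roots, apply $r_\beta$-invariance of $\otimes_\calV$ to $\alpha\otimes_\calV\check\gamma$ and $\gamma\otimes_\calV\check\alpha$, and cancel), and part (i) is the paper's computation $\beta\otimes_\calV(r_\alpha.\check\beta)=(r_\alpha.\beta)\otimes_\calV\check\beta$ for a simple edge. The only cosmetic difference is that you order the cases (ii) before (i) and make explicit the observation that a same-length adjacent pair must be joined by a simple edge, which the paper leaves to Table~\ref{tab:rootSystems}.
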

    \begin{proof}
      Let $\alpha$ and $\beta$ be adjacent in $B$ and of the same
      length. Then 
      Table~\ref{tab:rootSystems} shows that
      \begin{math}
        \langle\check\beta,\alpha\rangle=
        \langle\check\alpha,\beta\rangle=-1.
      \end{math}
      So,
      \begin{eqnarray*}
        \beta\otimes_\calV(\check\beta+\check\alpha)
        &=&
        \beta\otimes_\calV(r_\alpha.\check\beta)
        ~=~
        (r_\alpha.\beta)\otimes_\calV\check\beta
        ~=~
        (\beta+\alpha)\otimes_\calV\check\beta.
      \end{eqnarray*}
      This entails
      $\alpha\otimes_\calV\check\beta=\beta\otimes_\calV\check\alpha$. 

      Now suppose $\alpha$ and $\beta\in B$ with $\alpha\not\sim\beta$. 
      This entails
      \begin{math}
        \langle\check\alpha,\beta\rangle
        =
        \langle\check\beta,\alpha\rangle
        =
        0.
      \end{math}
      Since the Dynkin graph is connected and all except possibly one
      edge are simple,
      there must be a root
      $\gamma\in B$ such that $\alpha$ and $\gamma$ are simply
      adjacent,  or $\beta$ and $\gamma$ are simply adjacent. 
      We may assume the latter. So we have
      \begin{math}
         \langle\check\gamma,\beta\rangle
         =
        -1.
      \end{math}
      Then
      \begin{eqnarray*}
        \alpha\otimes_\calV\check\gamma
        &=&
        (r_\beta.\alpha)\otimes_\calV(r_\beta.\check\gamma)
        ~=~
        \alpha\otimes_\calV
        (\check\gamma+\check\beta)
        ~~~\text{and}
        \\
        \gamma\otimes_\calV\check\alpha
        &=&
        (r_\beta.\gamma)\otimes_\calV(r_\beta.\check\alpha)
        ~=~
        (\gamma+\beta)\otimes_\calV\check\alpha,
      \end{eqnarray*}
      so, $\alpha\otimes_\calV\check\beta=\beta\otimes_\calV\check\alpha=0$. 
    \end{proof}

    \begin{lem}\label{lem:rootCombsCheck}
      Let $\alpha$, $\beta$ and $\gamma$ be roots.
      \begin{enumerate}
  \item If 
        \begin{math}
          \alpha\not\sim\gamma
        \end{math}
        then
        \begin{math}
          \langle\check\gamma,\beta\rangle\alpha\otimes_\calV\check\beta
          =
          \langle\check\beta,\alpha\rangle\beta\otimes_\calV\check\gamma.
        \end{math}
  \item
        \begin{math}
          2\alpha\otimes_\calV\check\beta
          =
          \langle\check\beta,\alpha\rangle\alpha\otimes_\calV\check\alpha
          =
          \langle\check\beta,\alpha\rangle\beta\otimes_\calV\check\beta.
        \end{math}
      \end{enumerate}
    \end{lem}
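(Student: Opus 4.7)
The proof will follow the same template as Lemma~\ref{lem:rootCombs}, translated from the $\calL\otimes_\calV\calL$ setting to the $\calL\otimes_\calV\check\calL$ setting. The only ingredients needed are the $\calV$-invariance of $\otimes_\calV$ (so that we may apply a reflection $r_\beta$ simultaneously to both arguments), the reflection formulas $r_\beta.\alpha = \alpha-\langle\check\beta,\alpha\rangle\beta$ and $r_\beta.\check\gamma = \check\gamma-\langle\check\gamma,\beta\rangle\check\beta$, and the preceding lemma which gives $\alpha\otimes_\calV\check\gamma = 0$ when $\alpha,\gamma\in B$ with $\alpha\not\sim\gamma$.

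For (i), the plan is to expand $r_\beta.\check\gamma$ inside $-\alpha\otimes_\calV(r_\beta.\check\gamma)$. Using $\alpha\otimes_\calV\check\gamma=0$ this gives $\langle\check\gamma,\beta\rangle\alpha\otimes_\calV\check\beta = -\alpha\otimes_\calV(r_\beta.\check\gamma)$. Then apply $r_\beta$ to both arguments of the tensor to rewrite the right-hand side as $-(r_\beta.\alpha)\otimes_\calV\check\gamma$, expand $r_\beta.\alpha$, and use $\alpha\otimes_\calV\check\gamma=0$ once more to arrive at $\langle\check\beta,\alpha\rangle\beta\otimes_\calV\check\gamma$. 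This is exactly analogous to the argument for Lemma~\ref{lem:rootCombs}~(i), just with $\check\gamma$ and $\check\beta$ in place of $\gamma$ and $\beta$.

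For (ii), the first equality comes from applying $r_\alpha$ to both tensor factors: $\alpha\otimes_\calV\check\beta = (r_\alpha.\alpha)\otimes_\calV(r_\alpha.\check\beta) = -\alpha\otimes_\calV(\check\beta-\langle\check\beta,\alpha\rangle\check\alpha)$. Expanding and moving the $-\alpha\otimes_\calV\check\beta$ term to the left gives $2\alpha\otimes_\calV\check\beta = \langle\check\beta,\alpha\rangle\alpha\otimes_\calV\check\alpha$. The second equality is obtained in the same way by instead applying $r_\beta$ to both factors: $\alpha\otimes_\calV\check\beta = (r_\beta.\alpha)\otimes_\calV(r_\beta.\check\beta) = (\alpha-\langle\check\beta,\alpha\rangle\beta)\otimes_\calV(-\check\beta)$, and collecting terms.

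There is no real obstacle; the argument is a routine symbolic translation of the proof of Lemma~\ref{lem:rootCombs}. The only point that deserves mention is that the hypothesis $\alpha\not\sim\gamma$ implicitly requires $\alpha$ and $\gamma$ to lie in $B$, so that the preceding lemma applies to kill the cross term $\alpha\otimes_\calV\check\gamma$; without this vanishing the manipulation in (i) would leave an extra summand of the form $\langle\check\beta,\alpha\rangle\langle\check\gamma,\beta\rangle\beta\otimes_\calV\check\beta$.
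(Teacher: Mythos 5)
Your proof is correct and essentially identical to the paper's: part (i) is the same chain of equalities using the $\calV$-invariance of $\otimes_\calV$, the reflection formulas, and the vanishing of $\alpha\otimes_\calV\check\gamma$ supplied by the preceding lemma, and part (ii) applies $r_\alpha$ and then $r_\beta$ to both tensor factors exactly as the paper does. (Your closing aside is slightly off: if one tracks the $\alpha\otimes_\calV\check\gamma$ terms on both sides of $\alpha\otimes_\calV(r_\beta.\check\gamma)=(r_\beta.\alpha)\otimes_\calV\check\gamma$, they cancel, so no extra summand actually survives even without the vanishing --- but this does not affect the validity of your argument.)
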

    \begin{proof}
      Ad (i):
      \par\noindent
      \begin{math}
        \begin{array}[t]{rcl}
          \langle\check\gamma,\beta\rangle\alpha\otimes_\calV\check\beta
          &=&
          -\alpha\otimes_\calV
          (\check\gamma-\langle\check\gamma,\beta\rangle\check\beta)
          ~=~
          -\alpha\otimes_\calV(r_\beta.\check\gamma)
          \\
          &=&
          -(r_\beta.\alpha)\otimes_\calV\check\gamma
          ~=~
          -(\alpha-\langle\check\beta,\alpha\rangle\beta)\otimes_\calV\check\gamma
          \\
          &=&
          \langle\check\beta,\alpha\rangle\beta\otimes_\calV\check\gamma.
        \end{array}
      \end{math}
      \par\noindent
      \begin{tabular}[b]{c}
        Ad (ii):\\
        ~
      \end{tabular}
      \begin{math}
        \begin{array}[b]{rcl}
          \alpha\otimes_\calV\check\beta
          &=&
          (r_\alpha.\alpha)\otimes_\calV(r_\alpha.\check\beta)
          ~=~
          -\alpha\otimes_\calV
          (\check\beta-\langle\check\beta,\alpha\rangle\check\alpha),
          \\
          \alpha\otimes_\calV\check\beta
          &=&
          (r_\beta.\alpha)\otimes_\calV(r_\beta.\check\beta)
          ~=~
          (\alpha-\langle\check\beta,\alpha\rangle\beta)\otimes_\calV
          (-\check\beta).
        \end{array}
      \end{math}
      \vspace{1cm}
    \end{proof}
    
    Now we present the proof of the theorem.

    \begin{proof}
      If the root system is reduced and simply laced, or if it is
      non-reduced, then we can identify the $\calV$-modules 
      $\calL$ and $\check\calL$ by Remark~\ref{rem:calLId}. In those
      cases, we are done by the previous theorem. 
      So we focus on the non-simply laced reduced case:

      We use the identification $\calL\subseteq\check\calL$ from
      Lemma~\ref{lem:nonsimplyId}. 
      Due to the universal property of
      $\calL\otimes_\calV\check\calL$ 
      we have the following commutative diagram:
      \begin{equation*}
        \xymatrix{
          \calL\times\check\calL
          \ar[rr]
          \ar[rrd]
          &&
          \calL\otimes_\calV\check\calL
          \ar[d]
          \\
          &&
          \check\calL\otimes_\calV\check\calL.
        }
      \end{equation*}
      Since $k_\Delta\check\calL\subseteq\calL$ the bihomomorphism 
      \begin{math}
        \calL\times\check\calL
        \to
        \check\calL\otimes_\calV\check\calL
      \end{math}
      in this diagram
      has
      \begin{math}
        k_\Delta\check\calL\otimes_\calV\check\calL
      \end{math}
      and thus 
      a copy of $\Z$ in its image. So for our proof it suffices
      to show that 
      \begin{math}
        \calL\otimes_\calV\check\calL
      \end{math}
      is cyclic.

      Let $\alpha$ and $\beta$ be the unique pair of adjacent
      roots in
      $B$ such that $\alpha$ is short and $\beta$ is long and recall
      that 
      \begin{math}
        k=-\langle\check\alpha,\beta\rangle.
      \end{math}
      Then 
      \begin{eqnarray*}
        \beta\otimes_\calV(\check\beta+\check\alpha)
        &=&
        \beta\otimes_\calV(r_\alpha.\check\beta)
        ~=~
        (r_\alpha.\beta)\otimes_\calV\check\beta
        ~=~
        (\beta+k_\Delta\alpha)\otimes_\calV\check\beta,
        \text{~~so}
        \\
        \beta\otimes_\calV\check\alpha
        &=&
        k_\Delta\alpha\otimes_\calV\check\beta.
      \end{eqnarray*}
      Moreover, by Lemma~\ref{lem:rootCombsCheck}~(ii), we have
      \begin{align*}
        \alpha\otimes_\calV\check\alpha
        &=
        \beta\otimes_\calV\check\beta
        =
        -2\alpha\otimes_\calV\check\beta.
      \end{align*}

      If there is a root $\alpha'\in B$ with
      $\alpha'\sim\alpha$, 
      i.e. if the root system $\Delta$ is of type
      $C_\ell(\ell\ge3)$ or $F_4$, then $\alpha'$ is short, we have
      $\alpha'\not\sim\beta$  and  
      \begin{align*}
        \alpha'\otimes_\calV\check\alpha
        &=
        \alpha\otimes_\calV\check\beta
      \end{align*}
      by Lemma~\ref{lem:rootCombsCheck}~(i).
      
      If there is a root $\beta'\in B$ with
      $\beta'\sim\beta$, 
      i.e. if the root system $\Delta$ is of type
      $B_\ell(\ell\ge3)$ or $F_4$, then $\beta'$ is long, we have
      $\alpha\not\sim\beta'$  and
      \begin{align*}
        \beta\otimes_\calV\check{(\beta')}
        &=
        \alpha\otimes_\calV\check\beta
      \end{align*}
      by Lemma~\ref{lem:rootCombsCheck}~(i).

      So
      if $\gamma$ and
      $\delta\in B$ then
      \begin{equation*}
        \gamma\otimes_\calV\delta=
        \begin{cases}
          k_\Delta\alpha\otimes_\calV\check\beta
          &
          \text{if $\gamma=\beta$ and $\delta=\alpha$},
          \\
          \alpha\otimes_\calV\check\beta
          &
          \text{if $\gamma,\delta\in\Delta_\sh$ and
            $\gamma\sim\delta$}
          ~~(\text{due to Lemma~\ref{lem:rootCombs}~(i)}),
          \\
          \alpha\otimes_\calV\check\beta
          &
          \text{if $\gamma,\delta\in\Delta_\lng$ and
            $\gamma\sim\delta$}
          ~~(\text{due to Lemma~\ref{lem:rootCombs}~(i)}),
          \\
          -2\alpha\otimes_\calV\check\beta
          &
          \text{if $\gamma=\delta$}.
        \end{cases}
      \end{equation*}
      This shows that $\calL\otimes_\calV\check\calL$ is generated by
      $\alpha\otimes_\calV\check\beta$ and is thus cyclic.
    \end{proof}

    Set
    \begin{equation*}
      \calL\boxtimes\check\calL
      :=
      \frac{
        \calL\otimes_\calV\check\calL
      }{
        \big\langle \alpha\otimes_\calV\check\beta
        ~\big|~
        \alpha,\beta\in\Delta\text{~with~}r_\alpha\perp r_\beta
        \big\rangle
      }
    \end{equation*}
    and denote by
    \begin{math}
      \boxtimes:~
      \calL\times\check\calL
      \to
      \calL\boxtimes\check\calL
    \end{math}
    the associated bihomomorphism.

    \begin{rem}
      Suppose $\Delta$ is non-simply laced and reduced.
      With the identification $\calL\subseteq\check\calL$ from
      Lemma~\ref{lem:nonsimplyId} we have the
      following commutative digram:
      \begin{align}\label{eq:inclusions}
        \xymatrix{
          \calL\times\calL
          \ar[rr]
          \ar[d]
          &&
          \calL\boxtimes\calL
          \ar[d]^\phi
          \\
          \calL\times\check\calL
          \ar[rr]
          \ar[d]
          &&
          \calL\boxtimes\check\calL
          \ar[d]^\psi
          \\
          \check\calL\times\check\calL
          \ar[rr]
          &&
          \check\calL\boxtimes\check\calL.
        }
      \end{align}
      The homomorphism
      $\phi$
      exists due to the universal property of
      $\calL\boxtimes\calL$ and  
      the homomorphism
      $\psi$
      exists due to the universal property of
      $\calL\boxtimes\check\calL$.
    \end{rem}
    
    \begin{cor}
      \begin{math}
        \calL\boxtimes\check\calL\cong\Z.
      \end{math}
    \end{cor}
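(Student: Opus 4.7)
The plan is to construct a surjection $\calL\boxtimes\check\calL\to\Z$ from the canonical pairing and to argue separately that $\calL\boxtimes\check\calL$ is cyclic; a cyclic group surjecting onto $\Z$ must itself equal $\Z$, which will finish the proof.

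For the surjection, I would use the natural bihomomorphism $\langle\sbullet,\sbullet\rangle:\check\calL\times\calL\to\Z$, which is $\calV$-invariant by construction of the Weyl-module structure. Whenever $r_\alpha\perp r_\beta$ for $\alpha,\beta\in\Delta$, the commuting reflections force $\alpha$ and $\beta$ to be orthogonal with respect to the $\calV$-invariant symmetric form of Lemma~\ref{lem:universeZBil}, so $\langle\check\beta,\alpha\rangle=0$. The universal properties of $\otimes_\calV$ and of $\boxtimes$ therefore yield a homomorphism $\calL\boxtimes\check\calL\to\Z$ sending $\alpha\boxtimes\check\beta\mapsto\langle\check\beta,\alpha\rangle$; surjectivity is immediate since $\langle\check\beta,\alpha\rangle=-1$ for a pair of simple roots joined by a single edge (cf.\ Table~\ref{tab:rootSystems}).

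For cyclicity I would split into cases. If $\Delta$ is simply laced and reduced, Remark~\ref{rem:calLId} provides a $\calV$-equivariant isomorphism $\calL\cong\check\calL$ that identifies orthogonality relations, so $\calL\boxtimes\check\calL$ reduces to $\calL\boxtimes\calL\cong\Z$ by Corollary~\ref{cor:boxtimes}. If $\Delta$ is reduced and non-simply laced, Theorem~\ref{thm:otimesVCheck} already gives $\calL\otimes_\calV\check\calL\cong\Z$, whence its quotient $\calL\boxtimes\check\calL$ is automatically cyclic; alternatively, one can use the diagram~(\ref{eq:inclusions}) together with Corollary~\ref{cor:boxtimes} applied to the coroot system $\check\Delta$ to sandwich $\calL\boxtimes\check\calL$ between two copies of $\Z$.

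The delicate case is the non-reduced one, $\Delta=BC_\ell$, where Theorem~\ref{thm:otimesVCheck} gives $\calL\otimes_\calV\check\calL\cong\Z\times\Z_2$ rather than $\Z$. Here the main obstacle is to verify that the extra $\Z_2$-torsion is killed in the quotient $\calL\boxtimes\check\calL$. I would handle this by a small explicit computation in the standard basis $e_1,\dots,e_\ell$: the short root $\alpha:=e_1$ and the extra-long root $\beta:=2e_2$ are orthogonal with distinct commuting reflections, so the defining relation $\alpha\otimes_\calV\check\beta=e_1\otimes_\calV e_2$ vanishes in $\calL\boxtimes\check\calL$; on the other hand this very element represents the $\Z_2$-generator, as seen from the explicit surjection $\calL^2\to\Z\times\Z_2$, $(\lambda,\mu)\mapsto\bigl((\lambda|\mu),\overline\lambda\,\overline\mu\bigr)$, constructed in Case~5 of the proof of Theorem~\ref{thm:otimesV}. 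Combined with the surjection of the first step, this yields $\calL\boxtimes\check\calL\cong\Z$.
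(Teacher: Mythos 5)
Your proposal is correct, and its skeleton (show the group is cyclic, then map it onto an infinite cyclic group) matches the paper's in the reduced non-simply-laced case; but you diverge from the paper in two genuine ways. First, for the ``non-trivial'' half you push the canonical pairing $\langle\sbullet,\sbullet\rangle$ through $\boxtimes$ (using that commuting distinct reflections force $\langle\check\beta,\alpha\rangle=0$), which gives a uniform homomorphism onto an infinite subgroup of $\Z$ in every case; the paper instead only needs non-triviality in the non-simply-laced reduced case and extracts it from the map $\psi$ of diagram~(\ref{eq:inclusions}) together with $k_\Delta\check\calL\subseteq\calL$ and Corollary~\ref{cor:boxtimes}. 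Second, for the non-reduced type $BC_\ell$ the paper simply transports Corollary~\ref{cor:boxtimes} along the identification of Remark~\ref{rem:calLId}, whereas you stay inside $\calL\otimes_\calV\check\calL\cong\Z\times\Z_2$ (Theorem~\ref{thm:otimesVCheck}) and exhibit the explicit perpendicular pair $e_1$, $2e_2$ whose relation $e_1\otimes_\calV\check{(2e_2)}=e_1\otimes_\calV e_2$ is precisely the $\Z_2$-generator under the surjection $b$ of Case~5 of Theorem~\ref{thm:otimesV}; this is more hands-on but makes visible exactly which relation kills the torsion, something the paper leaves implicit in the identification. Your approach buys uniformity and an explicit witness; the paper's is shorter where it can lean on Corollary~\ref{cor:boxtimes}. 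One small inaccuracy: your surjectivity claim via ``$\langle\check\beta,\alpha\rangle=-1$ for a pair of simple roots joined by a single edge'' fails in rank one ($A_1$ and $BC_1$), where no such pair exists and the pairing of $\calL$ with $\check\calL$ only realizes $2\Z$ (resp.\ all of $\Z$ only via the extra-long coroot in $BC_1$); since a cyclic group with an infinite homomorphic image is already $\Z$, an infinite image suffices and nothing breaks, but the justification should be phrased that way.
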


    \begin{proof}
      In the simply laced reduced case, and in the non-reduced case 
      this follows from Corollary~\ref{cor:boxtimes} with the
      identification from Remark~\ref{rem:calLId}. In the non-simply
      laced reduced case, we know that
      $\calL\boxtimes\check\calL$ is isomorphic to a quotient
      of $\Z$ by Theorem~\ref{thm:otimesVCheck}. So it suffices to see
      that it is not trivial. 
      Consider the homomorphism $\psi$ in (\ref{eq:inclusions}). Since
      $k_\Delta\check\calL\subseteq\calL$, its image contains
      $k_\Delta\check\calL\boxtimes\check\calL$, which is not trivial by
      Corollary~\ref{cor:boxtimes}. 
    \end{proof}

    \begin{cor}\label{cor:inclusions}
      The maps $\phi$ and $\psi$ in (\ref{eq:inclusions}) are injective.
    \end{cor}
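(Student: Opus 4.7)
The plan is to reduce to the non-simply laced reduced case and then observe that, once there, the three groups at the right of diagram~(\ref{eq:inclusions}) are all isomorphic to $\Z$, so injectivity of the induced maps $\phi$ and $\psi$ is equivalent to their being nonzero.

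First I would dispose of the easy cases. If $\Delta$ is reduced and simply laced or is non-reduced, then Remark~\ref{rem:calLId} gives a $\calV$-equivariant identification of $\calL$ with $\check\calL$, and under this identification the vertical inclusions on the left of (\ref{eq:inclusions}) become identities. Consequently $\phi$ and $\psi$ are identity maps, and there is nothing to prove. So for the remainder I restrict to $\Delta$ non-simply laced and reduced, and use the identification $\calL\subseteq\check\calL$ from Lemma~\ref{lem:nonsimplyId}, with $k_\Delta\check\calL\subseteq\calL$.

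Under this assumption, Corollary~\ref{cor:boxtimes} applied to $\Delta$ gives $\calL\boxtimes\calL\cong\Z$, the same corollary applied to the dual root system $\check\Delta$ gives $\check\calL\boxtimes\check\calL\cong\Z$, and the corollary immediately preceding cor:inclusions gives $\calL\boxtimes\check\calL\cong\Z$. Hence $\phi$ and $\psi$ are both homomorphisms $\Z\to\Z$, and injectivity reduces to showing that each is nonzero on some nonzero element. For $\psi$, the argument is already implicit in the proof of the preceding corollary: since $k_\Delta\check\calL\subseteq\calL$, the image of $\psi$ contains $k_\Delta\check\calL\boxtimes\check\calL$, a nontrivial subgroup of $\check\calL\boxtimes\check\calL\cong\Z$, so $\psi$ is nonzero.

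For $\phi$, I would pick a short root $\alpha\in\Delta_\sh$, so that $\alpha\boxtimes\alpha$ is a nonzero element of $\calL\boxtimes\calL\cong\Z$. Under the identification of Lemma~\ref{lem:nonsimplyId}, the second $\alpha$ (living in the second tensor slot, which is $\check\calL$ on the right-hand side of $\phi$) corresponds to $k_\Delta\check\alpha$, so $\phi(\alpha\boxtimes\alpha)=k_\Delta(\alpha\boxtimes\check\alpha)$ in $\calL\boxtimes\check\calL$. To see this element is nonzero, recall from the proof of Theorem~\ref{thm:otimesVCheck} that $\calL\otimes_\calV\check\calL$ is cyclic, generated by $\alpha\otimes_\calV\check\beta$ for $\beta$ a long root adjacent to $\alpha$, and that $\alpha\otimes_\calV\check\alpha=-2\alpha\otimes_\calV\check\beta$. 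Passing to the quotient $\calL\boxtimes\check\calL\cong\Z$, the image of $\alpha\otimes_\calV\check\beta$ generates, so $\alpha\boxtimes\check\alpha=-2(\alpha\boxtimes\check\beta)$ and therefore $\phi(\alpha\boxtimes\alpha)=-2k_\Delta(\alpha\boxtimes\check\beta)$ is nonzero. Hence $\phi$ is injective.

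The main obstacle is purely organisational: tracking how the inclusion $\calL\hookrightarrow\check\calL$ of Lemma~\ref{lem:nonsimplyId} interacts with the various $\boxtimes$-products and confirming that the distinguished generators that arose in the proofs of Theorems~\ref{thm:otimesV} and \ref{thm:otimesVCheck} continue to witness nontriviality in the codomains. No genuinely new computation is needed beyond the case-analysis already carried out in this section.
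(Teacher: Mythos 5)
Your proposal is correct and follows essentially the same route as the paper: since all three groups $\calL\boxtimes\calL$, $\calL\boxtimes\check\calL$ and $\check\calL\boxtimes\check\calL$ are isomorphic to $\Z$, injectivity of $\phi$ and $\psi$ reduces to their being nontrivial, which the paper asserts and you verify (for $\psi$ by the containment $k_\Delta\check\calL\subseteq\calL$ exactly as in the preceding corollary, and for $\phi$ by an explicit generator computation with $\alpha\boxtimes\alpha$ that the paper leaves implicit). The only superfluous part is your discussion of the simply laced and non-reduced cases, since diagram~(\ref{eq:inclusions}) is only set up under the standing assumption that $\Delta$ is non-simply laced and reduced.
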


    \begin{proof}
      Since each of the groups
      $\calL\boxtimes\calL$,
      $\calL\boxtimes\check\calL$ and
      $\check\calL\boxtimes\check\calL$ is isomorphic to $\Z$, this
      follows from the fact that neither $\phi$ nor $\psi$ is trivial.
    \end{proof}

    \section{Root systems extended by an abelian group}                  %

    In this section we use the results derived in the previous one
    about a finite root system $\Delta$ to understand a root system $R$ of the
    kind $\Delta$ extended by an abelian group $G$. 
    (Compare \cite{yoshii-rootabelian}.) 
    We associate a symmetric system $T$ to $R$ and a particular
    $T$-reflection group $\calW$, the Weyl group of $R$. This notion of a
    Weyl group generalizes the notion of extended affine Weyl groups
    (EAWeGs). We prove the main result
    of this article (Theorem~\ref{thm:ab}), which characterizes the
    relationship between the initial $T$-reflection group $\calU$ and
    the Weyl group $\calW$ in terms of their abelianizations.

    We continue to work with the notation of the previous section, so 
    let $\Delta$ be an irreducible finite root system with
    coroot system $\check\Delta$. 
    Denote its Weyl group by $\calV$, its root lattice by
    $\calL$ and its coroot lattice by $\check\calL$. 
    Let $G$ be an abelian group.

    For $(g,\alpha)\in G\times\Delta$ set
    \begin{equation*}
      r_{(g,\alpha)}:~
      G\times\calL
      \to      
      G\times\calL,~
      (h,\beta)\mapsto
      (h-\langle\check\alpha,\beta\rangle g,
      r_\alpha.\beta).
    \end{equation*}
    Then $r_{(g,\alpha)}$ is a group homomorphism. Moreover, we have 
    \begin{eqnarray}
      (r_{(g,\alpha)}\circ r_{(g,\alpha)})(h,\beta)
      &=&
      r_{(g,\alpha)}
      (h-\langle\check\alpha,\beta\rangle g,
      r_\alpha.\beta)
      \nonumber\\
      &=&
      (
      h
      -\langle\check\alpha,\beta\rangle g
      -\langle\check\alpha,r_\alpha.\beta\rangle g,
      r_\alpha^2.\beta
      )
      \nonumber\\
      &=&
      \big(
      h
      -\langle\check\alpha,\beta\rangle g
      -\langle\check\alpha,\beta\rangle
      (1-\langle\check\alpha,\alpha\rangle)g,
      \beta
      \big)
      \nonumber\\\label{eq:invComp}
      &=&
      (
      h,
      \beta
      )
    \end{eqnarray}
    for $(h,\beta)\in G\times\calL$.
    In particular $r_{(\alpha,g)}$ is an element of the group
    \begin{align*}
      \Aut_G(G\times\calL)=
      \{\phi\in\Aut(G\times\calL)
      ~|~
      \phi(g,0)=(g,0)\text{~for all $g\in G$}
      \}.
    \end{align*}

    \begin{defn}\label{defn:r}
      We shall call a subset $R\subseteq G\times\Delta$ a
      \emph{root system of the kind $\Delta$ extended by $G$}
      if the following are satisfied:
      \medskip
      \begin{enumerate}
  \item[(R0)]
        The projection $R\to\Delta$ is surjective.
  \item[(R1)]
        The image of the projection $R\to G$ generates $G$.
  \item[(R2)] $\{0\}\times\Delta^\rd\subseteq R$.
  \item[(R3)]
        \begin{math}
          r_{\alpha}.R\subseteq R
        \end{math}
        for every $\alpha\in R$.
      \end{enumerate}
      The \emph{type} of the root system $R$ is the type of $\Delta$.
    \end{defn}
    
    \begin{rem}\label{rem:altAxioms}
      If we set
      \begin{equation*}
        S_\alpha:=\{g\in G~|~(g,\alpha)\in R\}
      \end{equation*}
      then each of the conditions (R0) to (R3) as equivalent to each
      of the following conditions, respectively:
      \begin{enumerate}
  \item[(R0')] For every $\alpha\in\Delta$ the set $S_\alpha$ is non-empty.
  \item[(R1')]
        $\bigcup_{\alpha\in\Delta}S_\alpha$ generates $G$.
  \item[(R2')]
        $0\in S_\alpha$ for every $\alpha\in\Delta^\rd$.
  \item[(R3')]
        \begin{math}
          S_\beta-\langle\check\alpha,\beta\rangle S_\alpha
          \subseteq
          S_{r_\alpha.\beta}
          ~~~\text{for all $\alpha$ and $\beta\in\Delta$.}
        \end{math}
      \end{enumerate}
      These are the axioms for a
      \emph{(not necessarily reduced) root system of type $\Delta$ 
        extended by $G$} defined in
      \cite{yoshii-rootabelian}. So our definition coincides with the
      one given there. 
      The anisotropic roots $R^\times$
      of an extended affine root system (EARS)  
      (see, for instance, \cite{EALA_AMS}) form a root system of the
      kind $\Delta$ extended by a finitely generated free abelian
      group $G$. So the results that follow are applicable to EARSs.
    \end{rem}

    We present some results about the sets $S_\alpha$ taken from
    \cite{yoshii-rootabelian}~Section 3. If two roots $\alpha$ and
    $\beta\in\Delta$ have the same length, then
    $S_\alpha=S_\beta$. This allows us to define $S_\sh$, $S_\lng$ and
    $S_\ex$ as follows. Let $x\in\{\sh,\lng,\ex\}$. If $\Delta_x$ is
    not empty then 
    \begin{math}
      S_x:=S_\alpha
    \end{math}
    where $\alpha\in\Delta_x$. Provided the following exist we have
    \begin{equation}\label{eq:shLngEx}
      k_\Delta S_\lng\subseteq S_\ex\subseteq S_\lng,
      ~~~
      k_\Delta S_\sh\subseteq S_\lng\subseteq S_\sh,
      ~~~\text{and}~~~
      k_\Delta^2S_\sh\subseteq S_\ex\subseteq S_\sh,
      ~~~
    \end{equation}
    where $k_\Delta$ is the number defined in (\ref{eq:k}) in the previous section.
    This means that condition (R1') (and thus condition (R1)) 
    is equivalent to $\langle S_\sh\rangle=G$.

    From now on, let $G$ be a torsion-free group and
    let $R$ be a root system of the kind $\Delta$
    extended by $G$.
    \begin{rem}\label{rem:homId}
      The group $\Aut_G(G\times\calL)$ is isomorphic to the semidirect
      product $\Hom(L,G)\rtimes\Aut(\calL)$, where we use the left
      action of $\Aut(\calL)$ on $\Hom(\calL,G)$ given by
      $\phi.\psi=\psi\circ\phi^{-1}$ for $\phi\in\Aut(\calL)$ and
      $\psi\in\Hom(\calL,G)$. 
      The isomorphism can be expressed by the action of
      $\Hom(L,G)\rtimes\Aut(\calL)$ on $G\times\calL$ given by
      \begin{align*}
        (\phi,\psi).(g,\lambda)
        =
        \big(g+\phi\circ\psi(\lambda),\psi(\lambda)\big).
      \end{align*}
      We will identify the two isomorphic groups.

      There is an injective group homomorphism
      \begin{align*}
        \check\calL\to\Hom(\calL,\Z),~
        \lambda\mapsto(\mu\mapsto\langle\lambda,\mu\rangle)
      \end{align*}
      This gives rise to a group homomorphism
      \begin{align*}
        G\otimes\check\calL\to G\otimes\Hom(\calL,\Z)
        \cong\Hom(\calL,G),
      \end{align*}
      which is injective since $G$ is torsion-free. 
      We will identify elements
      of $G\otimes\calL$ with their corresponding images in
      $\Hom(\calL,G)$. With the two identifications discussed we have
      \begin{align*}
        r_{(g,\alpha)}=(g\otimes\check\alpha, r_\alpha)
      \end{align*}
      for every $g\in G$ and $\alpha\in\Delta$.
    \end{rem}

    Set 
    \begin{equation*}
      T:=\{r_\alpha~|~\alpha\in R\}\subseteq\Aut_G(G\times\calL).
    \end{equation*}
    With the identification in Remark~\ref{rem:homId} we have
       \begin{align*}
         r{(g,\alpha)}.
         r{(h,\beta)}
         &=
         r_{(g,\alpha)}
         r_{(h,\beta)}
         r_{(g,\alpha)}
         =
         (g\otimes\check\alpha,r_\alpha)
         (h\otimes\check\beta,r_\beta)
         (g\otimes\check\alpha,r_\alpha)
         \\
         &=
         (g\otimes\check\alpha+h\otimes(r_\alpha.\check\beta),
         r_\alpha r_\beta)
         (g\otimes\check\alpha,r_\alpha)
         \\
         &=
         \big(
         g\otimes\check\alpha
         +
         h\otimes(r_\alpha.\check\beta)
         +
         g\otimes(r_\alpha r_\beta.\check\alpha),
         r_\alpha r_\beta r_\alpha
         \big)
         \\
         &=
         \big(
         h\otimes(r_\alpha.\check\beta)
         +
         g\otimes
         (\check\alpha-r_\alpha r_\beta r_\alpha.\check\alpha),
         r_{r_\alpha.\beta}
         \big)
         \\
         &=
         \big(
         h\otimes(r_\alpha.\check\beta)
         +
         g\otimes
         (\langle
         \check\alpha,r_\alpha.\beta
         \rangle
         r_\alpha.\check\beta),
         r_{r_\alpha.\beta}
         \big)
         \\
         &=
         \big(
         (h-\langle\check\alpha,\beta\rangle g)
         \otimes
         (r_\alpha.\check\beta),
         r_{r_\alpha.\beta}
         \big)
         =
         r_{(h-\langle\check\alpha,\beta\rangle g,r_\alpha.\beta)}
         \\
         &=
         r_{r_{(g,\alpha)}(h,\beta)}.
       \end{align*}

    So, for every $s$ and $t\in T$ the element $s.t:=sts^{-1}$ is in
    $T$ and defines a multiplication 
    $T\times T\to T,~(s,t)\mapsto s.t$ on $T$. As observed in
    Example~\ref{exmp:symSys} this multiplication turns $T$ into a 
    symmetric system.

    \begin{defn}
      We call $T$ 
      \emph{the symmetric system associated to $R$}.
    \end{defn}

    Denote the symmetric system associated to $\Delta$ by 
    $\overline T$.
    Then  
    \begin{align*}
      T\to\overline T,~r_{(g,\alpha)}\mapsto\overline{r_{(g,\alpha)}}
      =r_\alpha
    \end{align*}
    is well-defined and provides a surjective
    system morphism. If $\alpha\in\Delta$ 
    is a root such that $2\alpha\in\Delta$, then
    $r_\alpha=r_{2\alpha}$. So, in view of (R2) the assignment
    \begin{math}
      r_\alpha\mapsto\dot r_\alpha 
      =r_{(0,\alpha)}
    \end{math}
     for every
    $\alpha\in\Delta^\red$ defines a map
    \begin{math}
      \overline T\to T,
    \end{math}
    which is a system section for
    $\overline\sbullet$. 
   
    Let $\calA$ be the subgroup of 
    $\Aut_G(G\times\calL)$ generated by
    $T$. Then $\calA$ acts on $T$ by conjugation. 
    Denote by $\sbullet^\calA:~T\to\calA$ the embedding, then
    $(\calA,\sbullet^\calA)$ is a proper $T$-reflection group. 

    According to Remark~\ref{rem:homId} we may identify $\calA$ with
    the subgroup of $(G\otimes\check\calL)\rtimes\Aut(\calL)$
    generated by $T^\calA$. By definition $\calV$ is the subgroup of
    $\Aut(\calL)$ generated by the reflections $r_\Delta$. Let $\calK$
    be the subgroup of $\Hom(\calL,G)$ generated by
    \begin{align*}
      \{g\otimes\check\alpha~|~(g,\alpha)\in R\}.
    \end{align*}
    Then $\calA=\calK\rtimes\calV$. Moreover the projection
    $\calK\rtimes\calV\to\calV$ is $\overline\sbullet$-compatible, and since
    $\calV$ is the initial $\overline T$-reflection group, we have
    $\overline\calA=\calV$. We have 
    \begin{align}\label{eq:tCalK}
      t^\calK=t^\calA\dot t^\calA=
      (g\otimes\check\alpha,r_\alpha)(0,r_\alpha)
      =
      (g\otimes\check\alpha,1)
    \end{align}
    for all $(g,\alpha)\in R$ and $t=r_{(g,\alpha)}$. So $\calK$ can
    be identified with $\calK^\calA_\calV$ with the notation from
    (\ref{eq:kNotation}).

    Set
    \begin{math}
      \calK_\fix:=
      \{
      k\in\calK~|~v.k=k\text{~for every $v\in\calV$}\}.
    \end{math}

    \begin{lem}\label{lem:kFix}
      $\calK_\fix=\{0\}$
    \end{lem}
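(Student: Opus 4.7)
The plan is to interpret $\calV$-invariance of an element of $\Hom(\calL, G)$ as factoring through the coinvariant quotient $\calL/\calL_\eff$, and then invoke Proposition~\ref{prop:calLEff} together with torsion-freeness of $G$.

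First I would observe that by Remark~\ref{rem:homId} we may view $\calK$ as a subgroup of $\Hom(\calL, G)$ and the action of $v \in \calV$ on $\psi \in \Hom(\calL, G)$ is $v.\psi = \psi \circ v^{-1}$. Hence $k \in \calK$ lies in $\calK_\fix$ if and only if $k(v.\lambda) = k(\lambda)$ for all $v \in \calV$ and $\lambda \in \calL$, equivalently, $k$ vanishes on the subgroup
\begin{equation*}
\calL_\eff = \langle v.\lambda - \lambda \mid v \in \calV,\ \lambda \in \calL \rangle
\end{equation*}
introduced before Proposition~\ref{prop:calLEff}. Thus every $k \in \calK_\fix$ factors uniquely through a homomorphism $\calL/\calL_\eff \to G$.

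Next I would apply Proposition~\ref{prop:calLEff}, which classifies $\calL/\calL_\eff$: it is either trivial or isomorphic to $\Z_2$. In the first case $k$ is automatically zero. In the second case the induced homomorphism $\calL/\calL_\eff \to G$ maps a group of order two into $G$; but $G$ is torsion-free by hypothesis, so this homomorphism must be zero as well. Either way $k = 0$, which shows $\calK_\fix = \{0\}$.

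There is no real obstacle here: the statement reduces cleanly to the structural result of Section~6 (Proposition~\ref{prop:calLEff}) plus the torsion-free assumption on $G$. The only care needed is to get the convention for the $\calV$-action on $\Hom(\calL, G)$ right, so that $\calV$-invariance genuinely corresponds to descent through $\calL/\calL_\eff$ rather than through some other quotient.
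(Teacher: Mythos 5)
Your proposal is correct and follows essentially the same route as the paper: translate $\calV$-invariance of $k\in\Hom(\calL,G)$ into $k(\calL_\eff)=\{0\}$, then use Proposition~\ref{prop:calLEff} together with torsion-freeness of $G$ (the paper phrases the last step as $2\calL\subseteq\calL_\eff$ forcing $k=0$, which is the same as your observation that a homomorphism from $\calL/\calL_\eff\cong\Z_2$ or $\{0\}$ into a torsion-free group vanishes).
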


    \begin{proof}
      Let $k\in\calK_\fix$. So $k\in\Hom(\calL,G)$ with
      \begin{math}
        k(v.\lambda-\lambda)=0
      \end{math}
      for all $\lambda\in\calL$ and $v\in\calV$. In other words
      \begin{math}
        k(\calL_\eff)=\{0\}.
      \end{math}
      Since $2\calL\subseteq\calL_\eff$ due to 
      Proposition~\ref{prop:calLEff} and since $G$ is torsion-free, we
      can conclude $k=0$.
    \end{proof}

    \begin{lem}\label{lem:trivialCenter}
      If $G$ is non-trivial, 
      then the center of $\calA$ is trivial.
    \end{lem}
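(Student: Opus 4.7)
The plan is to use the semidirect-product decomposition $\calA = \calK \rtimes \calV$ established via Remark~\ref{rem:homId}, and to test an arbitrary central element $(k, v) \in Z(\calA)$ against a small set of generators in order to pin down $k$ and $v$ separately.

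First, I would commute $(k, v)$ with the generators $(0, r_\alpha) = r_{(0, \alpha)}$ for $\alpha \in \Delta^\red$, which lie in $\calA$ by axiom (R2). A short semidirect-product calculation yields two independent conditions: $v r_\alpha = r_\alpha v$, so that, since the reflections $r_\alpha$ generate $\calV$, the element $v$ is central in $\calV$; and $r_\alpha.k = k$, so that $k$ lies in $\calK_\fix$. Lemma~\ref{lem:kFix} then immediately gives $k = 0$.

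Next one must show $v = 1$. For a finite irreducible Weyl group the center is either trivial (in which case we are done) or equals $\{\pm 1\}$, where $-1$ denotes the longest element, acting on $\calL$ by negation. So the only remaining case is $v = -1$. To eliminate it, I pick some $(g, \alpha) \in R$ with $g \neq 0$, whose existence is guaranteed by axiom (R1) together with the non-triviality of $G$. Conjugating $r_{(g, \alpha)} = (g \otimes \check\alpha, r_\alpha)$ by $(0, -1)$ in the semidirect product produces $r_{(-g, \alpha)}$; requiring this to equal $r_{(g, \alpha)}$ forces $2(g \otimes \check\alpha) = 0$ in $\Hom(\calL, G)$. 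Evaluating this homomorphism at $\alpha \in \calL$ gives $4g = 0$, and the torsion-freeness of $G$ yields $g = 0$, contradicting the choice.

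The main obstacle is the second step: the conclusion $v \in Z(\calV)$ from the first step does not by itself force $v = 1$, since the longest element lies in $\calV$ for several types (e.g.\ $A_1$, $B_\ell$, $C_\ell$, $D_{2\ell}$, $E_7$, $E_8$, $F_4$, $G_2$). What makes the argument close is the combination of axiom (R1), which supplies a nonzero $g$ in some root, with the torsion-freeness of $G$, which upgrades $4g = 0$ to $g = 0$.
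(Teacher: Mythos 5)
Your proof is correct, but it splits the work differently from the paper. The paper first commutes the central element $(k_1,v)$ with the ``translation'' parts $t^{\calK}=(g\otimes\check\alpha,1)$, where for \emph{every} $\alpha\in\Delta$ a non-trivial $g$ with $(g,\alpha)\in R$ is produced via (R1) together with the containments (\ref{eq:shLngEx}); the resulting relation $g\otimes(\check\alpha-v.\check\alpha)=0$ plus torsion-freeness gives $v.\check\alpha=\check\alpha$ for all $\alpha$, hence $v=1$ directly, and only then is $k\in\calK_\fix=\{0\}$ extracted. You instead commute first with the reflections $r_{(0,\alpha)}$ supplied by (R2), which simultaneously yields $k\in\calK_\fix=\{0\}$ (Lemma~\ref{lem:kFix}, as in the paper) and $v\in Z(\calV)$; you then invoke the external fact that the center of an irreducible finite Weyl group is contained in $\{\pm\id\}$ (absolute irreducibility of the reflection representation) and kill the case $v=-\id$ by conjugating a single root $r_{(g,\alpha)}$ with $g\neq0$. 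Your route is more economical in its use of the root-system axioms --- it needs only one root with non-zero $G$-component, straight from (R1), rather than one over every $\alpha$ via (\ref{eq:shLngEx}) --- but it pays for this by importing the classification of centers of irreducible Weyl groups, which the paper's argument avoids entirely by deducing $v.\check\alpha=\check\alpha$ for all $\alpha$ from scratch. Both arguments are complete; the computations you sketch (the commutation conditions $vr_\alpha=r_\alpha v$ and $r_\alpha.k=k$, and the conjugation $r_{(g,\alpha)}\mapsto r_{(-g,\alpha)}$ forcing $4g=0$) all check out against the semidirect-product structure and the identification of Remark~\ref{rem:homId}.
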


    \begin{proof}
      If $k_1$, $k_2\in\calK$ and $v\in\calV$ then
      \begin{align}\label{eq:comm}
        \nonumber
        [(k_1,v),(k_2,1)]
        &=
        (k_1,v)(k_2,1)(-v^{-1}.k_1,v^{-1})(-k_2,1)
        \\
        \nonumber
        &=
        (k_1+v.k_2,v)(-v^{-1}.k_1-v^{-1}.k_2,v^{-1})
        \\
        &=
        (k_1+v_1.k_2-k_1-k_2,1)
        =
        (v_1.k_2-k_2,1).
      \end{align}

      Let $\alpha\in\Delta$. Since $G$ is torsion-free, axiom (R1) and 
      (\ref{eq:shLngEx}) imply that $\langle S_\alpha\rangle$ is not
      just the trivial subgroup of $G$. So there is a non-trivial 
      $g\in G$ such that
      $(g,\alpha)\in R$. Suppose that $(k_1,v)$ is in the
      center of $\calA$. 
      Let
      $(k_2,1)=t^\calK=(g\otimes\check\alpha,1)$ with
      $t=r_{(g,\alpha)}$. 
      Then (\ref{eq:comm}) yields
      \begin{math}
        g\otimes\check\alpha
        -
        g\otimes v.\check\alpha
        =
        g\otimes(\check\alpha-v.\check\alpha)
        =0.
      \end{math}
      Since $G$ is torsion-free, this implies
      $v.\check\alpha=\check\alpha$. Since $\alpha\in\Delta$ was
      arbitrary, we may conclude $v=1$. So any element of the center
      of $\calA$ is of the form $(k_1,1)$.

      Now suppose $(k_2,1)$ is in the center of $\calA$. 
      Then (\ref{eq:comm}) implies $k_2\in\calK_\fix$. According to
      Lemma~\ref{lem:kFix}, we have $k_2=0$.
    \end{proof}

    \begin{cor}\label{cor:aInitial}
      If $G$ is not trivial, then $\calA$ is the terminal
      $T$-reflection group.
    \end{cor}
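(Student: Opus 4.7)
The plan is to combine Lemma~\ref{lem:trivialCenter} with Lemma~\ref{lem:center} directly. Recall from the construction that $(\calA,\sbullet^\calA)$ has already been identified in the text as a \emph{proper} $T$-reflection group, in particular it separates reflections. Lemma~\ref{lem:center} therefore applies and gives
\begin{equation*}
  Z(\calA) = \ker(\calA\to\calT),
\end{equation*}
where $\calT$ denotes the terminal $T$-reflection group.

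Next I would invoke the hypothesis $G\neq\{0\}$ and Lemma~\ref{lem:trivialCenter} to conclude that $Z(\calA)$ is trivial, so that $\ker(\calA\to\calT)=1$. Since every $T$-reflection morphism is surjective (its image contains a generating set of the target), the map $\calA\to\calT$ is both injective and surjective, hence an isomorphism of $T$-reflection groups. Therefore $\calA$ itself is terminal.

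There is no real obstacle here: all the work has already been done in Lemmas~\ref{lem:center} and~\ref{lem:trivialCenter}. The only thing worth double-checking is that $\calA$ is genuinely a $T$-reflection group in the formal sense required by Lemma~\ref{lem:center} (which is exactly the content of the paragraph just before Remark~\ref{rem:homId}, where $\sbullet^\calA$ is defined as the inclusion $T\hookrightarrow\Aut_G(G\times\calL)$ and shown to be proper). So the proof reduces to a one-line citation of these two lemmas.
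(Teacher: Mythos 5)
Your proof is correct and follows essentially the same route as the paper: both arguments reduce to the triviality of $Z(\calA)$ (Lemma~\ref{lem:trivialCenter}) together with the fact that the kernel of the morphism to the terminal group is central. The only cosmetic difference is that the paper cites Lemma~\ref{lem:centralExt} (every reflection morphism is a central extension), whereas you cite Lemma~\ref{lem:center}, which needs the additional (and available) hypothesis that $\calA$ separates reflections; either citation works.
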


    \begin{proof}
      Suppose $\calB$ is the terminal $T$-reflection group. Then there is a
      reflection morphism $\calA\to\calB$. This is a central
      extension by Lemma~\ref{lem:centralExt}. 
      So it is injective by Lemma~\ref{lem:trivialCenter}.
    \end{proof}


    \begin{rem}
      Suppose $R$ is of non-reduced type, i.e. of type
      $BC_\ell(\ell\ge1)$. As done in \cite{myWeyl}, we will
      associate a trimmed version of $R$ to it. However, here we will
      accomplish this by turning short roots into extra 
      long ones and not the
      other way around, as in the cited article.
      For a root $(g,\alpha)\in R$ we set
      \begin{eqnarray*}
        \trim(g,\alpha)
        &=&
        \begin{cases}
          (2g,2\alpha)&\text{if $\alpha\in\Delta_\sh$,}\\
          (g,\alpha)&\text{otherwise.}
        \end{cases}
      \end{eqnarray*}
      Then we define
      \begin{equation*}
        R'
        :=
        \trim(R)
        \subseteq
        R^\lng\cup R^\ex,
        ~~~
        \Delta'
        :=
        \Delta^\lng\cup\Delta^\ex
        \text{~~~and~~~}
        G'
        :=
        \langle S_\lng\rangle\subseteq G.
      \end{equation*}
      Then $\Delta'$ is an irreducible finite root system. Its
      associated symmetric system can be identified with the symmetric
      system $\overline T$ associated to 
      $\Delta$. The Weyl group $\calV'$ of $\Delta'$ then coincides,
      as a $\overline T$-reflection group, with $\calV$. Since we have
      \begin{align*}
        r_{\trim(\alpha)}=r_\alpha
        \text{~~~and~~~}
        r_\alpha.\trim(\beta)=\trim(r_\alpha.\beta)
      \end{align*}
      for all $\alpha$ and $\beta\in R$, the set $R'$ is a root system
      of kind $\Delta'$ extended by $G'$.

      Note that we have 
      $2(G\times\calL)
      \subseteq
      G'\times\calL'
      \subseteq 
      G\times\calL$. Since $G\times\calL$ is torsion-free,
      every element 
      $\Aut(G'\times\calL')$ extends to a unique element of 
      $\Aut(G\times\calL)$. This provides an injective group
      homomorphism 
      $\Aut(G'\times\calL')\to\Aut(G\times\calL)$. 
      This homomorphism maps
      \begin{math}
        r'_{\trim(\alpha)}
      \end{math}
      to $r_\alpha$ for every $\alpha\in R$. So it restricts to an
      isomorphism $\calA'\to\calA$. Using this isomorphism, we may
      identify the symmetric systems $T'$ and $T$.

      Since our further investigations only depend on the
      symmetric system $T$ with its terminal reflection group $\calA$,
      we may restrict ourselves to the case of a reduced root system
      $\Delta$.  
    \end{rem}

    From now on, let $\Delta$ be a \emph{reduced}
    irreducible root system.
    If $R$ is of simply laced type, i.e. only contains short roots,
    then $\calK=G\otimes\check\calL$ since $S_\sh$ generates
    $G$. If $R$ is of non-simply laced type, we will use the concept
    of a twist decomposition to understand $\calK$ better.

    \begin{defn}\propname{Twist decomposition}
      Suppose $\Delta$ is of non-simply laced type.
      A decomposition $G=G_1\oplus G_2$ as a direct sum is called a
      \emph{twist decomposition (for $R$)}
      if the following conditions are satisfied:
      \begin{enumerate}
  \item[(T1)]
        \begin{math}
          S_\sh
          =
          (S_\sh\cap G_1)+\langle S_\lng\cap G_2\rangle
        \end{math}
  \item[(T2)]
        \begin{math}
          S_\lng
          =
          k_\Delta\langle S_\sh\cap G_1\rangle+(S_\lng\cap G_2)
        \end{math}
      \end{enumerate}
      A root system $R$ will be called 
      \emph{tame} if it is of non-simply-laced type and $G$ has a
      twist decomposition, or if it is of simply laced type.
    \end{defn}

    If $G$ is finitely generated and free then $R$ is tame.
    (See
    \cite{EAWG}~Lemma~1.19). The twist number defined in
    \cite{EAWG}~Definition~1.18 in the case of EARSs is
    the rank of $G_1$ for a corresponding twist decomposition
    $G=G_1\oplus G_2$. 

    \begin{rem}\label{rem:twist}
      In view of the fact that
      \begin{math}
        \langle S_\sh\rangle=G,
      \end{math}
      a decomposition $G=G_1\oplus G_2$ is a twist decomposition if
      and only if the following conditions are satisfied:
      \begin{enumerate}
  \item
        \begin{math}
          S_\sh
          =
          (S_\sh\cap G_1)+G_2,
        \end{math}
  \item
        \begin{math}
          S_\lng
          =
          k_\Delta G_1+(S_\lng\cap G_2),
        \end{math}
  \item
        \begin{math}
          \langle G_2\cap S_\lng\rangle=G_2.
        \end{math}
      \end{enumerate}
      If it is a twist decomposition, then
      \begin{lastenumerate}
        \setcounter{enumi}{3}
  \item
        \begin{math}
          \langle G_1\cap S_\sh\rangle=G_1,
        \end{math}
  \item
        \begin{math}
          \langle S_\sh\rangle=G_1+G_2
        \end{math}
        and
        \begin{math}
          \langle S_\lng\rangle=k_\Delta G_1+G_2.
        \end{math}
      \end{lastenumerate}
    \end{rem}
    
    From now on suppose that $R$ is tame. 
    If we have a twist decomposition $G=G_1\oplus G_2$ then 
    we will use the identification of
    $\calL\subseteq\check\calL$ from Lemma~\ref{lem:nonsimplyId}. In
    this case we have
    \begin{lem}
        \begin{math}
          \calK
          =
          (G_1\otimes\calL)
          \oplus
          (G_2\otimes\check\calL).
        \end{math}
    \end{lem}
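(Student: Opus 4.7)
The plan is to prove the two inclusions separately, viewing each side as a subgroup of $G\otimes\check\calL\subseteq\Hom(\calL,G)$. The direct sum on the right is automatic: since $G=G_1\oplus G_2$ with torsion-free summands, $G\otimes\check\calL$ splits as $(G_1\otimes\check\calL)\oplus(G_2\otimes\check\calL)$, and the injection $\calL\hookrightarrow\check\calL$ from Lemma~\ref{lem:nonsimplyId} places $G_1\otimes\calL$ inside $G_1\otimes\check\calL$; hence $(G_1\otimes\calL)\cap(G_2\otimes\check\calL)=0$.

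For the inclusion $\supseteq$, I treat the two summands separately. To see $G_2\otimes\check\calL\subseteq\calK$, use $\check\calL=\langle\check\Delta_\sh\rangle$ to reduce to producing $g_2\otimes\check\alpha$ with $g_2\in G_2$ and $\alpha\in\Delta_\lng$. By Remark~\ref{rem:twist}(iii), $\langle S_\lng\cap G_2\rangle=G_2$, so $g_2$ is a $\mathbb{Z}$-combination of elements $h\in S_\lng\cap G_2$; each pair $(h,\alpha)\in R$ forces $h\otimes\check\alpha\in\calK$, and linearity gives the claim. For $G_1\otimes\calL\subseteq\calK$, use the identification from Lemma~\ref{lem:nonsimplyId} to describe $\calL$ (as a submodule of $\check\calL$) in terms of coroots of short roots together with $k_\Delta$ times the coroots of long roots. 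For generators $g_1\otimes\check\alpha$ of the first type, Remark~\ref{rem:twist}(iv) writes $g_1\in G_1$ as a combination of $h\in S_\sh\cap G_1$, each giving $(h,\alpha)\in R$ and hence a generator of $\calK$. For generators of the second type, rewrite $g_1\otimes(k_\Delta\check\alpha)=(k_\Delta g_1)\otimes\check\alpha$ and use axiom (T2), which supplies $k_\Delta G_1\subseteq S_\lng$, so that $(k_\Delta g_1,\alpha)\in R$.

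For the inclusion $\subseteq$, I verify that each generator $g\otimes\check\alpha$ of $\calK$ (with $(g,\alpha)\in R$) lies in the right-hand side, by cases on the length of $\alpha$. If $\alpha$ is short, Remark~\ref{rem:twist}(i) decomposes $g=g_1+g_2$ with $g_1\in S_\sh\cap G_1$ and $g_2\in G_2$; then $g_1\otimes\check\alpha\in G_1\otimes\calL$ (since $\check\alpha$ for short $\alpha$ lies in $\calL$ under the identification) and $g_2\otimes\check\alpha\in G_2\otimes\check\calL$. If $\alpha$ is long, Remark~\ref{rem:twist}(ii) decomposes $g=k_\Delta g_1+g_2$ with $g_1\in G_1$ and $g_2\in S_\lng\cap G_2$; then $g\otimes\check\alpha=g_1\otimes(k_\Delta\check\alpha)+g_2\otimes\check\alpha$, where the first summand lies in $G_1\otimes\calL$ thanks to $k_\Delta\check\calL\subseteq\calL$, and the second in $G_2\otimes\check\calL$.

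The only real subtlety is the bookkeeping around Lemma~\ref{lem:nonsimplyId}: one must keep straight which coroots sit in $\calL$ directly and which require a factor of $k_\Delta$ to land there. Once that convention is fixed, every remaining step is a routine expansion using bilinearity, the generation statements $\langle S_\sh\cap G_1\rangle=G_1$ and $\langle S_\lng\cap G_2\rangle=G_2$, and the twist-decomposition axioms (T1) and (T2).
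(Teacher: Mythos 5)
Your proof is correct and follows essentially the same route as the paper's: both reduce to the generators $g\otimes\check\alpha$ with $(g,\alpha)\in R$, split according to whether $\alpha$ is short or long, and combine the twist-decomposition identities of Remark~\ref{rem:twist} with the inclusions $k_\Delta\check\calL\subseteq\calL\subseteq\check\calL$ — the paper merely compresses this into computing $\langle T^\calK_\sh\rangle$ and $\langle T^\calK_\lng\rangle$ and adding them. Your bookkeeping convention (coroots of short roots lie in $\calL$ directly, coroots of long roots only after multiplying by $k_\Delta$) is the one consistent with the proof of Lemma~\ref{lem:nonsimplyId} and with the use made of it here, so that delicate point is handled correctly.
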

    \begin{proof}
      Recall (\ref{eq:tCalK}) and the fact
      that $\calK$ is the subgroup of
      \begin{align*}
        G\otimes\check\calL
        =
        (G_1\otimes\check\calL)
        \oplus
        (G_2\otimes\check\calL)
      \end{align*}
      generated by $T^\calK$. Let 
      $T_\sh^\calK$ be the subset of $\calK$
      consisting of the elements $r_\alpha^\calK$ 
      with $\alpha\in\Delta_\sh$ 
      and let
      $T_\lng^\calK$ be the subset of $\calK$
      consisting of the elements $r_\alpha^\calK$ 
      with $\alpha\in\Delta_\lng$.
      Then
      \begin{align*}
        \langle T_\sh^\calK\rangle
        &=
        (G_1\otimes\calL)
        +
        (G_2\otimes\calL)
        &
        &\text{and}
        &
        \langle T_\lng^\calK\rangle
        &=
        (k_\Delta G_1\otimes\check\calL)
        +
        (G_2\otimes\check\calL).
      \end{align*}
      Since $k\check\calL\subseteq\calL\subseteq\check\calL$ by
      Lemma~\ref{lem:nonsimplyId} we obtain
      the claim of the Lemma.
    \end{proof}

    \begin{defn}
      A reflection bihomomorphism $b:~\calK^2\to\calZ$ 
      is called 
      \emph{admissible} if the following condition
      is satisfied:
      If $s$ and $t\in T$ with $\overline s\perp\overline t$ then
      \begin{math}
        {b\big(s^\calK,t^\calK\big)
        =
        0.}
      \end{math}
    \end{defn}

    \begin{rem}\label{rem:univAdmis}
      There is an abelian group 
      $
      \calK
      \curlywedge
      \calK
      $
      and a admissible reflection bihomomorphism
      \begin{equation*}
      \curlywedge:
      \calK^2
      \to
      \calK
      \curlywedge
      \calK,~
      (x,y)\mapsto
      x
      \curlywedge
      y
      \end{equation*}
      such that the following universal property is satisfied:
      For any admissible
      \begin{math}
        b:\calK^2\to B
      \end{math}
      there is a unique group homomorphism 
      $\phi:\calK\curlywedge\calK\to B$ such that
      the following diagram commutes:
      \begin{equation*}
        \xymatrix{
        \calK^2
        \ar[rr]
        \ar[rrd]^b
        &&
        \calK\curlywedge\calK
        \ar[d]^\phi
        \\
        &&
        B.
      }
      \end{equation*}
      
      The group
      \begin{math}
        \calK\curlywedge\calK
      \end{math}
      can be constructed as the quotient of
      \begin{math}
        \calK\otimes\calK
      \end{math}
      by the subgroup generated by the union of the sets

      \par\noindent
      \begin{minipage}[b]{0.95\linewidth}
      \begin{align*}
        M_1
        &=
        \{
        t^\calK_1\otimes t_2^\calK
        ~|~
        \text{$t_1$ and $t_2\in T$
          with $t_1\perp t_2$}
        \},
        \\
        M_2
        &=
        \big\{
        t_1^\calK\otimes t_2^\calK
        -
        s^\calV.t_1^\calK\otimes s^\calV.t_2^\calK
        ~|~
        \text{$t_1,t_2\in T$
          and $s\in S$}
        \big\},
        \\
        M_3
        &=
        \{          
        t^\calK\otimes t^\calK
        ~|~
        \text{$t\in T$}
        \},
        \\
        M_4
        &=
        \{
        t_1^\calK\otimes t_2^\calK
        +
        t_2^\calK\otimes t_1^\calK
        ~|~
        \text{$t_1$ and $t_2\in T$}
        \}
        \text{~~~and}
        \\
        M_5
        &=          
        \big\{
        t^\calK\otimes s^\calV.t^\calK
        ~|~
        \text{$t\in T$ and $s\in S$}
        \big\}.
      \end{align*}
      \end{minipage}
    \end{rem}

    Recall that $R$ is supposed to be tame and
    consider the map
    \begin{align*}
      (G\otimes\check\calL)^2
      \to
      (G\wedge G)
      \otimes
      (\check\calL\boxtimes\check\calL),~
      (g,\lambda,h,\mu)\mapsto
      (g\wedge h)\otimes(\lambda\boxtimes\mu).
    \end{align*}
    Its restriction to $\calK^2$ is admissible.
    So, by the universal property of 
    \begin{math}
      \calK\curlywedge\calK
    \end{math}
    there is a group homomorphism
$\psi$
    such that the following diagram commutes:
      \begin{equation*}
        \xymatrix{
          \calK^2
          \ar[rr]
          \ar[drr]
          &&
          \calK
          \curlywedge
          \calK
          \ar[d]^\psi
          \\
          &&
        (G\wedge G)
        \otimes
        (\check\calL\boxtimes\check\calL).
        }        
      \end{equation*}

    \begin{thm} 
      The map $\psi$ is injective and 
      if $R$ is simply laced, it is surjective.
    \end{thm}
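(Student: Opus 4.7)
The strategy is to exploit the explicit description of $\calK$ given by the preceding lemma, along with the universal properties of $G \wedge G$, $\otimes_\calV$, and $\boxtimes$, in order to construct a section of $\psi$ and (in the simply laced case) to verify that $\psi$ hits all generators of the codomain. In the simply laced case, identify $\calL$ with $\check\calL$ via Remark~\ref{rem:calLId} so that $\calK = G \otimes \calL$; in the tame non-simply laced case, use the decomposition $\calK = (G_1 \otimes \calL) \oplus (G_2 \otimes \check\calL)$ furnished by the preceding lemma.

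For injectivity, I would construct an explicit candidate inverse $\sigma$. Consider the assignment $(g, h, \lambda, \mu) \mapsto (g \otimes \lambda) \curlywedge (h \otimes \mu)$ from $G \times G \times \check\calL \times \check\calL$ into $\calK \curlywedge \calK$ (which lands directly in $\calK \curlywedge \calK$ in the simply laced case, and in the non-simply laced case after splitting via the twist decomposition and using the inclusion $\calL \subseteq \check\calL$ from Lemma~\ref{lem:nonsimplyId}). This assignment is alternating in $(g, h)$ by $M_3$ and $M_4$, is $\calV$-invariant by $M_2$, and vanishes on orthogonal pairs of roots by $M_1$, so it descends through the universal properties of $G \wedge G$, $\otimes_\calV$, and $\boxtimes$ to a map $\sigma : (G \wedge G) \otimes (\check\calL \boxtimes \check\calL) \to \calK \curlywedge \calK$. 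Checking $\sigma \circ \psi = \mathrm{id}$ on the generators $r_{(g,\alpha)}^\calK \curlywedge r_{(h,\beta)}^\calK$ of $\calK \curlywedge \calK$ then yields injectivity.

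For surjectivity in the simply laced case, every root is short and $\langle S_\sh \rangle = G$, so an arbitrary generator $(g \wedge h) \otimes (\check\gamma \boxtimes \check\delta)$ of the codomain can be rewritten by bilinearity as a sum of terms $(g_i \wedge h_j) \otimes (\check\gamma \boxtimes \check\delta)$ with $g_i \in S_{\alpha_i}$ and $h_j \in S_{\beta_j}$. By Corollary~\ref{cor:boxtimes} the generators of $\check\calL \boxtimes \check\calL$ may be taken of the form $\check\alpha \boxtimes \check\alpha$ and $\check\alpha \boxtimes (r_\beta.\check\alpha)$, and by Weyl-equivariance of $\boxtimes$ we may replace $\alpha$ by $\alpha_i$ at will. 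Each resulting term is then visibly the image under $\psi$ of a suitable $r_{(g_i,\alpha_i)}^\calK \curlywedge r_{(h_j,\beta)}^\calK$.

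The main obstacle I anticipate is well-definedness of $\sigma$ in the non-simply laced case, where $g \otimes \lambda$ need not lie in $\calK$ for arbitrary $g \in G$ and $\lambda \in \check\calL$. Here the twist decomposition is essential: $\sigma$ must be defined componentwise on each summand $G_i \otimes \ell_i$ (with $\ell_1 = \calL$ and $\ell_2 = \check\calL$), using the inclusion $k_\Delta \check\calL \subseteq \calL$ combined with torsion-freeness of $G$ to ensure that the relations defining $\boxtimes$ (which were proved for $\check\calL$) transport correctly to the pieces involving $G_1 \otimes \calL$. Once well-definedness is established, the verification $\sigma \circ \psi = \mathrm{id}$ proceeds as in the simply laced case.
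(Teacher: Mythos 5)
Your simply laced argument is essentially the paper's computation reorganized as the construction of a left inverse, and it can be made to work, but the key step is under-justified: the claim that $(g,h,\lambda,\mu)\mapsto(g\otimes\lambda)\curlywedge(h\otimes\mu)$ is alternating in $(g,h)$ does not follow from $M_3$ and $M_4$ alone. Expanding $g=\sum_k g_k$ with $g_k\in S_\sh$ and $\lambda,\mu$ in the root basis, the off-diagonal terms $(g_k\otimes\check\alpha)\curlywedge(g_l\otimes\check\beta)+(g_l\otimes\check\alpha)\curlywedge(g_k\otimes\check\beta)$ only cancel via $M_4$ if one also knows the symmetry in the lattice slots, $(g_k\otimes\check\alpha)\curlywedge(g_l\otimes\check\beta)=(g_k\otimes\check\beta)\curlywedge(g_l\otimes\check\alpha)$, which is the analogue of Lemma~\ref{lem:tensorSym} and requires the same root-system case analysis (adjacency, $M_5$ combined with $M_3$, connectedness of the Dynkin diagram) that underlies Theorem~\ref{thm:otimesV}. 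You cannot shortcut this by deducing $2x=0\Rightarrow x=0$ either, since torsion-freeness of $\calK\curlywedge\calK$ is Corollary~\ref{cor:kKTorsionFree}, which is derived \emph{from} this theorem.

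The genuine gap is in the non-simply laced case, which is precisely where injectivity is delicate (there $\psi$ is not surjective). Your section $\sigma$ must be a homomorphism defined on a group containing $\im\psi$, but the formula $(g\wedge h)\otimes(\lambda\boxtimes\mu)\mapsto(g\otimes\lambda)\curlywedge(h\otimes\mu)$ only makes sense when $g\otimes\lambda$ and $h\otimes\mu$ lie in $\calK=(G_1\otimes\calL)\oplus(G_2\otimes\check\calL)$. On the summand $(G_1\wedge G_1)\otimes(\check\calL\boxtimes\check\calL)$ of the codomain, the formula is therefore only defined on the proper finite-index subgroup $(G_1\wedge G_1)\otimes(\calL\boxtimes\calL)$, and a homomorphism on a finite-index subgroup of a free abelian group need not extend: the obvious extension would require dividing by $k_\Delta$ (or $k_\Delta^2$) inside $\calK\curlywedge\calK$, which is not available, and "torsion-freeness of $G$" does not supply it. The way out --- and the route the paper takes --- is not to extend $\sigma$ at all but to identify the image: one shows that $\calK\curlywedge\calK$ is a quotient of the explicitly computed group $\bigl((G_1\wedge G_1)\otimes(\calL\boxtimes\calL)\bigr)\oplus\bigl((G_1\otimes G_2)\otimes(\calL\boxtimes\check\calL)\bigr)\oplus\bigl((G_2\wedge G_2)\otimes(\check\calL\boxtimes\check\calL)\bigr)$, and then uses Corollary~\ref{cor:inclusions} (injectivity of $\calL\boxtimes\calL\to\calL\boxtimes\check\calL\to\check\calL\boxtimes\check\calL$) to embed that direct sum into $(G\wedge G)\otimes(\check\calL\boxtimes\check\calL)$, forcing $\psi$ to be injective. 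Without this identification of the image, your plan for the non-simply laced case does not go through.
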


    \begin{proof}
      Throughout this proof we will take advantage of the fact that
      the tensor product is commutative and associative and identify 
      $\calK\otimes\calK$ with the subgroup of 
      \begin{math}
        G\otimes G\otimes\check\calL\otimes\check\calL
      \end{math}      
      generated by
      \begin{math}
        \{g\otimes h\otimes\alpha\otimes\beta
        ~|~
        (g,\alpha)\text{~and~}(h,\beta)\in R
        \}.
      \end{math}

      First we assume that $R$ is simply laced, so 
      $\calK\cong G\otimes\check\calL$. We will use the sets
      $M_1$ to $M_5$ defined in Remark~\ref{rem:univAdmis} and restate
      them here.
      Let $\calK\boxtimes\calK$ be the
      quotient of $\calK\otimes\calK$ by the subgroup
      generated by the union of the sets
      \begin{align*}
        M_1
        &=
        \{
        g\otimes h\otimes\check\alpha\otimes\check\beta
        ~|~
        \text{$(g,\alpha),(h,\beta)\in R$
          with $r_\alpha\perp r_\beta$}
        \}
        \text{~~~and}
        \\
        M_2
        &=
        \big\{
        g\otimes h\otimes
        \big(
        \check\alpha\otimes\check\beta
        -
        (r_\gamma.\check\alpha)\otimes(r_\gamma.\check\beta)
        \big)
        ~\big|~
        \text{$(g,\alpha),(h,\beta)\in R$ and $\gamma\in\Delta$}
        \big\}.
      \end{align*}
      By
      \begin{math}
        \pi:~
        \calK\otimes\calK
        \to
        \calK\boxtimes\calK  
      \end{math}
      we denote the quotient morphism.
      Recall the properties of $\check\calL\boxtimes\check\calL$
      given in Corollary~\ref{cor:boxtimes}.
      We have
      \begin{equation*}
        \calK\boxtimes\calK
        \cong
        G\otimes G\otimes(\check\calL\boxtimes\check\calL).
      \end{equation*}
      Now set
      \begin{align*}
        M_3
        &=
        \{          
        g\otimes g\otimes\check\alpha\otimes\check\alpha
        ~|~
        (g,\alpha)\in R
        \},
        \\
        M_4
        &=
        \{
        g\otimes h\otimes\check\alpha\otimes\check\beta
        -
        h\otimes g\otimes\check\beta\otimes\calV\check\alpha
        ~|~
        (g,\alpha),(h,\beta)\in R
        \}
        \\
        M_4'
        &=
        \{
        (g\otimes h-h\otimes g)\otimes\check\alpha\otimes\check\beta
        ~|~
        (g,\alpha),(h,\beta)\in R
        \}
        \text{~~~and}
        \\
        M_5
        &=
        \{
        g\otimes g\otimes\check\alpha\otimes(r_\gamma.\check\alpha)
        ~|~
        \text{$(g,\alpha)\in R$ and $\gamma\in\Delta$}
        \}.
      \end{align*}
      Note that $\pi(M_4)=\pi(M_4')$ since
      $(\check\calL)^2\to\check\calL\boxtimes\check\calL$ is
      symmetric according to Theorem~\ref{thm:otimesV}.
      So
      $\calK\curlywedge\calK$ 
      is the quotient of 
      $\calK\boxtimes\calK$ 
      by its subgroup
      generated by 
      \begin{math}
        \pi(M_3)
        \cup
        \pi(M_4')
        \cup
        \pi(M_5).
      \end{math}
      Since
      $\check\calL\boxtimes\check\calL$ is generated by
      \begin{equation*}
        \{
        \check\alpha\boxtimes\check\alpha
        ~|~
        \alpha\in\Delta
        \}
        \cup
        \{
        \check\alpha\boxtimes(r_\beta.\check\alpha)
        ~|~
        \alpha\text{~and~}\beta\in\Delta
        \}
      \end{equation*}
      by Corollary~\ref{cor:boxtimes}, we have
      \begin{equation*}
        \calK\curlywedge\calK
        \cong
        (G\wedge G)\otimes(\check\calL\boxtimes\check\calL).
      \end{equation*}

      Now we assume that $R$ is not simply laced and that there is a
      twist decomposition
      \begin{math}
        G=G_1\oplus G_2.
      \end{math}
      We have
      \begin{eqnarray*}
        \calK\otimes\calK
        &=&
        \big(
        (G_1\otimes\calL)\oplus(G_2\otimes\check\calL)
        \big)
        \otimes
        \big(
        (G_1\otimes\calL)\oplus(G_2\otimes\check\calL)
        \big)
        \\
        &=&
        \begin{array}[t]{clcl}
          &
          (G_1\otimes G_1\otimes\calL\otimes\calL)
          &\oplus&
          (G_1\otimes G_2\otimes\calL\otimes\check\calL)
          \\
          \oplus&
          (G_2\otimes G_1\otimes\check\calL\otimes\calL)
          &\oplus&
          (G_2\otimes G_2\otimes\check\calL\otimes\check\calL).
        \end{array}
      \end{eqnarray*}
      Set
      \begin{eqnarray*}
        \calK
        \overline\boxtimes
        \calK
        &=&
        \begin{array}[t]{clcl}
          &
          \big
          (G_1\otimes G_1\otimes
          (\calL\boxtimes\calL)
          \big)
          &\oplus&
          \big
          (G_1\otimes G_2\otimes
          (\calL\boxtimes\check\calL)
          \big)
          \\
          \oplus&
          \big
          (G_2\otimes G_1\otimes
          (\check\calL\boxtimes\calL)
          \big)
          &\oplus&
          \big
          (G_2\otimes G_2\otimes
          (\check\calL\boxtimes\check\calL)
          \big).
        \end{array}
      \end{eqnarray*}
      Then
      $
      \calK
      \overline\boxtimes
      \calK
      $ 
      can be obtained as a
      quotient of $\calK\otimes\calK$ by some subgroup
      of the group
      ${\langle M_1\cup M_2\rangle}$.
      Denote the quotient homomorphism by 
      $
      \overline\pi:~
      \calK\otimes\calK
      \to
      \calK
      \overline\boxtimes
      \calK
      $.

      Now set
      \begin{eqnarray*}
        \calK
        \overline\curlywedge
        \calK
        &=&
        \begin{array}[t]{clcl}
          &
          \big(
          (G_1\wedge G_1)\otimes
          (\calL\boxtimes\calL)
          \big)
          &\oplus&
          \big(
          (G_1\otimes G_2)\otimes
          (\calL\boxtimes\check\calL)
          \big)
          \\
          \oplus&
          \big(
          (G_2\wedge G_2)\otimes
          (\check\calL\boxtimes\check\calL)
          \big).
        \end{array}
      \end{eqnarray*}
      Then
      $
      \calK
      \overline\curlywedge
      \calK
      $ 
      can be obtained as a
      quotient of 
      $
      \calK
      \overline\boxtimes
      \calK
      $ 
      by some subgroup of the group
      ${\langle\overline\pi(M_3\cup M_4\cup M_4'\cup M_5)\rangle.}
      $
      For this, note that 
      $\check\calL\boxtimes\check\calL$ is generated by
      \begin{align*}
        \{
        \check\alpha\boxtimes\check\alpha
        ~|~
        \alpha\in\Delta_\lng
        \}
        \cup
        \{
        \check\alpha\boxtimes(r_\beta.\check\alpha)
        ~|~
        \alpha\in\Delta_\lng,\beta\in\Delta
        \}
      \end{align*}
      and with the identification $\calL\subseteq\check\calL$ from
      Lemma~\ref{lem:nonsimplyId} the group
      $\calL\boxtimes\calL$ 
      is generated by
      $
      \{
      \check\alpha\otimes_\calV\check\alpha
      ~|~
      \alpha\in\Delta_\sh
      \}
      \cup
      \{
      \check\alpha\otimes_\calV(r_\beta.\check\alpha)
      ~|~
      \alpha\in\Delta_\sh,\beta\in\Delta
      \}.
      $ 
      
      By Corollary~\ref{cor:inclusions}, 
      we have 
      \begin{math}
        \calL\boxtimes\calL
        \subseteq
        \calL\boxtimes\check\calL
        \subseteq
        \check\calL\boxtimes\check\calL.
      \end{math}
      So
      \begin{math}
        \calK
        \overline\boxtimes
        \calK
      \end{math} 
      is a subgroup of
      \begin{eqnarray*}
        G\wedge G\otimes(\check\calL\boxtimes\check\calL)
        &=&
        \begin{array}[t]{clcl}
          &
          \big(
          (G_1\wedge G_1)\otimes
          (\check\calL\boxtimes\check\calL)
          \big)
          \\
          \oplus&
          \big(
          (G_1\otimes G_2)\otimes
          (\check\calL\boxtimes\check\calL)
          \big)
          \\
          \oplus&
          \big(
          (G_2\wedge G_2)\otimes
          (\check\calL\boxtimes\check\calL)
          \big).
        \end{array}
      \end{eqnarray*}
      By the way we have constructed 
      \begin{math}
        \calK
        \overline\curlywedge
        \calK,
      \end{math}
      there is a surjective homomorphism 
      \begin{math}
        \calK
        \overline\curlywedge
        \calK
        \to
        \calK
        \curlywedge
        \calK
      \end{math}
      and overall we have the following commuting diagram:
      \begin{equation*}
        \xymatrix{
          &&
          \calK
          \overline\curlywedge
          \calK
          \ar[d]
          \ar@/^2pc/[dd]^\phi
          \\
          \calK^2
          \ar[rru]
          \ar[rr]
          \ar[rrd]
          &&
          \calK
          \curlywedge
          \calK
          \ar[d]_\psi
          \\
          &&
          (G\wedge G)
          \otimes
          (\check\calL\boxtimes\check\calL),
        }
      \end{equation*}
      where $\phi$ is injective. 
      This means that the map $\psi$ is injective.
    \end{proof}

    \begin{cor}\label{cor:kKTorsionFree}
      The group $\calK\curlywedge\calK$ is isomorphic to a subgroup of
      $G\wedge G$ and is thus torsion-free.
    \end{cor}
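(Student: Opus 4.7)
The plan is to combine the injectivity of $\psi$ just established with the explicit shape of its codomain. By the preceding theorem we have an injective homomorphism
\[
\psi: \calK\curlywedge\calK \hookrightarrow (G\wedge G)\otimes(\check\calL\boxtimes\check\calL).
\]
The key observation is that the second tensor factor is cyclic: by the corollary following Theorem~\ref{thm:otimesVCheck} (with the identification from Remark~\ref{rem:calLId} in the simply laced or non-reduced cases), we have $\check\calL\boxtimes\check\calL\cong\Z$. Hence the codomain of $\psi$ is canonically isomorphic to $(G\wedge G)\otimes\Z\cong G\wedge G$, giving the desired embedding $\calK\curlywedge\calK\hookrightarrow G\wedge G$.

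It remains to check that $G\wedge G$ is torsion-free. Since $G$ is torsion-free abelian, it is the directed union of its finitely generated subgroups, each of which is free abelian; the exterior square functor commutes with directed colimits of abelian groups, so $G\wedge G$ is the directed union of the groups $H\wedge H$ for $H\le G$ finitely generated and free. Each such $H\wedge H$ is free abelian (of rank $\binom{\rk H}{2}$), in particular torsion-free. A directed union of torsion-free groups is torsion-free, so $G\wedge G$ is torsion-free, and therefore so is the subgroup $\calK\curlywedge\calK$.

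There is no real obstacle here; the corollary is essentially immediate given the preceding theorem. The only point requiring any care is the torsion-freeness of $G\wedge G$, which one might prefer to cite rather than prove, but in any case follows from the elementary colimit argument sketched above.
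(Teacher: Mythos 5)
Your proof is correct and follows essentially the same route as the paper: the injectivity of $\psi$ from the preceding theorem combined with $\check\calL\boxtimes\check\calL\cong\Z$ (which the paper cites as Corollary~\ref{cor:boxtimes} applied to the coroot system, rather than the corollary after Theorem~\ref{thm:otimesVCheck}) gives the embedding into $G\wedge G$. The paper leaves the torsion-freeness of $G\wedge G$ implicit, so your directed-colimit argument for that point is a welcome (and correct) addition rather than a deviation.
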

    \begin{proof}
      By the preceding theorem, the group $\calK\curlywedge\calK$ is
      isomorphic to a subgroup of
      \begin{math}
        (G\wedge G)
        \otimes
        (\check\calL\boxtimes\check\calL),
      \end{math}
      which in turn is isomorphic to $G\wedge G$ by
      Corollary~\ref{cor:boxtimes}.
    \end{proof}

    \begin{defn}\propname{Weyl group}
      Suppose $R$ is tame. Then
      the $T$-reflection group $\calW(\calA,\curlywedge)$ is called the
      \emph{Weyl group of $R$}.
    \end{defn}

    This new notion of a Weyl group coincides with the existing notion
    of a Weyl group when $R$ can be interpreted as an EARS as in 
    Remark~\ref{rem:altAxioms}.

    Recall that $c_\calU:\calK^2\to(\calK^\calU)'$ 
    (defined in Remark~\ref{rem:comMap}) is a
    reflection bihomomorphism due to
    Proposition~\ref{prop:commutatorMap}. 

    \begin{lem}
      The reflection bihomomorphism $c_\calU$ is admissible.
    \end{lem}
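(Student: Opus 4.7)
The plan is to reduce admissibility to the vanishing of a specific commutator in $\calU$. First I will invoke Remark~\ref{rem:section} to identify the canonical lift of $s^\calK$ to $\calK^\calU$ as $s^\calU\dot s^\calU$ (and similarly for $t$). By the defining property of $c_\calU$ in Remark~\ref{rem:comMap}, this yields
\begin{equation*}
c_\calU(s^\calK,t^\calK)=[s^\calU\dot s^\calU,\,t^\calU\dot t^\calU]\in(\calK^\calU)',
\end{equation*}
so admissibility reduces to showing that this commutator is trivial whenever $\overline s\perp\overline t$.

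Next I will translate the hypothesis $\overline s\perp\overline t$ into concrete identities in $T$. Writing $s=r_{(g,\alpha)}$ and $t=r_{(h,\beta)}$, the condition $\overline s\perp\overline t$ means $r_\alpha\neq r_\beta$ and that $r_\alpha$, $r_\beta$ commute in $\calV$. Since $\Delta$ is reduced, this is equivalent to $\langle\check\alpha,\beta\rangle=0$ with $\alpha$ and $\beta$ non-proportional. Substituting into the explicit formula
\begin{equation*}
r_{(g_1,\gamma_1)}.r_{(g_2,\gamma_2)}=r_{(g_2-\langle\check\gamma_1,\gamma_2\rangle g_1,\,r_{\gamma_1}.\gamma_2)}
\end{equation*}
derived earlier in the section, I will verify that $s.t=t$ and $t.s=s$, and the analogous identities for every pairing of $s$ or $\dot s=r_{(0,\alpha)}$ with $t$ or $\dot t=r_{(0,\beta)}$. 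Thus all of $s,\dot s,t,\dot t$ fix each other pairwise under the $T$-multiplication.

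Finally I will lift these relations to $\calU$ using axiom~(G3): each identity $x.y=y$ in $T$ becomes $x^\calU y^\calU=y^\calU x^\calU$ in $\calU$. Hence the four elements $s^\calU,\dot s^\calU,t^\calU,\dot t^\calU$ commute pairwise in $\calU$, so $s^\calU\dot s^\calU$ and $t^\calU\dot t^\calU$ commute, and the commutator above is trivial. The only computational step is the case analysis in the middle paragraph, which is a mechanical substitution of $\langle\check\alpha,\beta\rangle=0$ into the multiplication formula; no genuine obstacle arises. The main conceptual point is simply recognizing that $c_\calU$ admits the explicit commutator description via the system section $\dot\sbullet$, so that the orthogonality in $S$ can be propagated up to $\calU$ via four pairwise commutations rather than one.
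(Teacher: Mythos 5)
Your proposal is correct and follows essentially the same route as the paper: express $c_\calU(s^\calK,t^\calK)$ as the commutator $[s^\calU\dot s^\calU,\,t^\calU\dot t^\calU]$, deduce $\langle\check\alpha,\beta\rangle=0$ from $\overline s\perp\overline t$, verify the fixing relations in $T$ for each cross-pairing of $\{s,\dot s\}$ with $\{t,\dot t\}$, and lift via (G3) to obtain the four cross-commutations that kill the commutator. (Only a wording caution: you need the four cross-commutations, not that all four elements commute "pairwise" --- $s$ need not fix $\dot s$ --- but your argument only uses the cross-pairs, exactly as the paper does.)
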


    \begin{proof}
      Suppose $s$ and $t\in T$. There are $(g,\alpha)$ and 
      $(h,\beta)\in R$ such that 
      $s=r_{(g,\alpha)}$ and $t=r_{(h,\beta)}$.
      Now we suppose 
      $\overline s\perp\overline t$, in other words
      $r_\alpha\perp r_\beta$. This implies
      $
      \langle\check\alpha,\beta\rangle
      =  
      0.
      $
      So we have
      \begin{align*}
        s.t
        =
        r_{(g,\alpha)}.r_{(h,\beta)}
        =
        r_{(h+\langle\check\alpha,\beta\rangle g,r_\alpha.\beta)}
        =
        r_{(h,\beta)}
        =
        t.
      \end{align*}
      This entails
      \begin{align*}
        [s^\calU,t^\calU]
        &=
        [s^\calU,\dot t^\calU]
        =
        [\dot s^\calU,t^\calU]
        =
        [\dot s^\calU,\dot t^\calU]
        =
        1
        \text{~~~and thus}
        \\
          \big[
          s^\calU
          \dot s^\calU,
          t^\calU
          \dot t^\calU
          \big]
          &=
          s^\calU
          \dot s^\calU
          t^\calU
          \dot t^\calU
          \dot s^\calU
          s^\calU
          \dot t^\calU
          t^\calU
          =
          t^\calU
          \dot t^\calU
          s^\calU
          \dot s^\calU
          \dot s^\calU
          s^\calU
          \dot t^\calU
          t^\calU
          =1
        \end{align*}
        in $\calU$. Written additively, in the center of $\calU$ this
        means   
        \begin{math}
          c_\calU\big(
          s^\calK,
          t^\calK
          \big)=0.
        \end{math}
      \end{proof}


      The $T$-reflection morphism $\calU\to\calW$ restricts to a
      surjective group homomorphism 
      $\phi:(\calK^\calU)'\to(\calK^\calW)'$.
      
    \begin{prop}
      The map $\phi$ is an isomorphism.
    \end{prop}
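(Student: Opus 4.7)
Surjectivity of $\phi$ is already known, so the task is to exhibit a left inverse for $\phi$. My plan is to construct such an inverse as the composition of two natural maps, one coming from the universal property of $\curlywedge$, the other from Corollary~\ref{cor:comMap} applied to the bihomomorphism defining $\calW$.

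First, I would invoke the preceding lemma, which shows that $c_\calU:\calK^2\to(\calK^\calU)'$ is an admissible reflection bihomomorphism. By the universal property of $\curlywedge$ in Remark~\ref{rem:univAdmis}, this produces a unique group homomorphism
\begin{math}
\theta:~\calK\curlywedge\calK\to(\calK^\calU)'
\end{math}
with $\theta(k_1\curlywedge k_2)=c_\calU(k_1,k_2)$ for all $k_1,k_2\in\calK$. Next, since $\calK\curlywedge\calK$ is torsion-free by Corollary~\ref{cor:kKTorsionFree}, it contains no involutions. Applying Corollary~\ref{cor:comMap} with $\calX=\calW=\calA^\curlywedge$ and $b=\curlywedge$ then yields an injective group homomorphism
\begin{math}
\iota:~(\calK^\calW)'\to\calK\curlywedge\calK
\end{math}
such that $\iota(c_\calW(k_1,k_2))=k_1\curlywedge k_2$ for all $k_1,k_2\in\calK$.

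The key step is to check that $\theta\circ\iota\circ\phi=\id$ on $(\calK^\calU)'$. Because $\calK^\calU$ is a central extension of the abelian group $\calK$, it is $2$-step nilpotent, so the commutator map $[\sbullet,\sbullet]$ is bihomomorphic and its image already is a subgroup. Thus $(\calK^\calU)'$ is generated by elements of the form $c_\calU(k_1,k_2)$, and it suffices to track these generators. Since $\phi$ is induced by a group homomorphism it respects commutators, so $\phi(c_\calU(k_1,k_2))=c_\calW(k_1,k_2)$. Applying $\iota$ gives $k_1\curlywedge k_2$, and $\theta$ sends this back to $c_\calU(k_1,k_2)$. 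Hence $\theta\circ\iota\circ\phi$ is the identity on a generating set, and therefore on all of $(\calK^\calU)'$.

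This shows $\phi$ has a left inverse and is injective; combined with the surjectivity stated before the proposition, $\phi$ is the desired isomorphism. I do not expect any serious obstacle: the whole argument is a triangle-chasing verification once the two auxiliary maps $\theta$ and $\iota$ are in place, and the only nontrivial input beyond the universal properties is Corollary~\ref{cor:kKTorsionFree} to guarantee that $\calK\curlywedge\calK$ contains no involutions. The mild subtlety worth stressing in the write-up is the $2$-step nilpotence of $\calK^\calU$, which is what reduces the identity $\theta\circ\iota\circ\phi=\id$ to a check on pure commutators.
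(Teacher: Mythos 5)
Your proof is correct and follows essentially the same route as the paper: both constructions hinge on the same two auxiliary maps, namely $\theta:\calK\curlywedge\calK\to(\calK^\calU)'$ from the universal property of $\curlywedge$ applied to the admissible bihomomorphism $c_\calU$, and $\iota:(\calK^\calW)'\to\calK\curlywedge\calK$ from Corollary~\ref{cor:comMap} together with Corollary~\ref{cor:kKTorsionFree}. The only (immaterial) difference is that you verify $\theta\circ\iota\circ\phi=\id$ on the generators of $(\calK^\calU)'$, whereas the paper closes the triangle by noting $\iota\circ\phi\circ\theta=\id$ on $\calK\curlywedge\calK$ and then uses the surjectivity of $\theta$.
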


    \begin{proof}
      The way $c_\calU$ and $c_\calW$ are defined in
      Remark~\ref{rem:comMap} the following diagram commutes:
      \begin{equation*}
        \xymatrix{
          &&
          \calK\curlywedge\calK
          \ar[d]
          \ar@/^2pc/[ddd]^\id
          \\
          \calK^2
          \ar[urr]^{\curlywedge}
          \ar[rr]^{c_\calU}
          \ar[drr]^{c_\calW}
          \ar[ddrr]^{\curlywedge}
          &&
          (\calK^\calU)'
          \ar[d]^\phi
          \\
          &&
          (\calK^\calW)'
          \ar[d]
          \\
          &&
          \calK\curlywedge\calK.
        }
      \end{equation*}
      The map
      $(\calK^\calW)'\to\calK\curlywedge\calK$ 
      exists due to Corollary~\ref{cor:comMap}. 
      The map $\calK\curlywedge\calK\to(\calK^\calU)'$ exists by the
      universal property of $\curlywedge$.
      It is surjective, so
      $\phi$ is injective. 
    \end{proof}

    Due to the preceding proposition we have the commutative diagram
    \begin{align*}
      \xymatrix{
        1
        \ar[rr]
        \ar[d]
        &&
        (\calK^\calU)'
        \ar[rr]
        \ar[d]
        &&
        (\calK^\calW)'
        \ar[d]
        \\
        \ker(\calU\to\calW)
        \ar[rr]
        \ar[d]_\psi
        &&
        \calU
        \ar[rr]
        \ar[d]
        &&
        \calW
        \ar[d]
        \\
        \ker(\calU_\ab\to\calW_\ab)
        \ar[rr]
        &&
        \calU_\ab
        \ar[rr]
        &&
        \calW_\ab
      }
    \end{align*}
    and by Lemma~\ref{lem:exact} we obtain
    \begin{cor}\label{cor:abKer}
      The map $\psi$ is an isomorphism.
    \end{cor}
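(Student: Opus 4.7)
The plan is to apply Lemma~\ref{lem:exact} to the commutative square formed by the bottom-right $2 \times 2$ block of the diagram, namely
\begin{equation*}
    \xymatrix{
        \calU \ar[rr] \ar[d] && \calW \ar[d] \\
        \calU_\ab \ar[rr] && \calW_\ab,
    }
\end{equation*}
and exploit the fact that the preceding proposition gives us complete control over the top row of the $3 \times 3$ diagram that results.

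First, I would verify that all four maps in this square are surjective: the horizontal maps $\calU \to \calW$ and $\calU_\ab \to \calW_\ab$ are surjective because reflection morphisms are surjective (and surjectivity is preserved by abelianization in the sense of Section~4), while the vertical maps $\calU \to \calU_\ab$ and $\calW \to \calW_\ab$ are the quotient maps by $(\calK^\calU)'$ and $(\calK^\calW)'$ respectively, which are surjective by construction. In particular, with the notation of~(\ref{eq:kNotation}), we have $\calK^\calU_{\calU_\ab} = (\calK^\calU)'$ and $\calK^\calW_{\calW_\ab} = (\calK^\calW)'$, so Lemma~\ref{lem:exact} applies and yields a short exact sequence
\begin{equation*}
    1 \longrightarrow \ker\bigl((\calK^\calU)' \to (\calK^\calW)'\bigr)
    \longrightarrow \ker(\calU \to \calW)
    \stackrel{\psi}{\longrightarrow} \ker(\calU_\ab \to \calW_\ab)
    \longrightarrow 1.
\end{equation*}

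Next, the preceding proposition tells us that the map $\phi \colon (\calK^\calU)' \to (\calK^\calW)'$ is an isomorphism, so its kernel is trivial. Feeding this into the above short exact sequence immediately gives that $\psi$ is both injective and surjective, hence an isomorphism. No further work is required because the hard content is already in the proposition that $\phi$ is an isomorphism, whose proof in turn rests on Corollary~\ref{cor:kKTorsionFree} and the universal property of $\curlywedge$.

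The main potential obstacle here is purely bookkeeping: one must be certain that the identifications $\ker(\calU \to \calU_\ab) = (\calK^\calU)'$ and $\ker(\calW \to \calW_\ab) = (\calK^\calW)'$ hold in the sense required by Lemma~\ref{lem:exact}. This is immediate from the definition $\calX_\ab = \calX/(\calK^\calX)'$ given in Section~4, applied to $\calX = \calU$ and $\calX = \calW$ (both of which are $T$-reflection groups). Once this is in place, the argument is a clean diagram chase.
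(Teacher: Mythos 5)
Your argument is correct and is essentially the paper's own proof: the paper likewise applies Lemma~\ref{lem:exact} to the square formed by $\calU\to\calW$ and $\calU_\ab\to\calW_\ab$, identifies the vertical kernels as $(\calK^\calU)'$ and $(\calK^\calW)'$, and concludes from the preceding proposition that $\phi$ is an isomorphism, so the leftmost term of the resulting short exact sequence $1\to\ker\phi\to\ker(\calU\to\calW)\to\ker(\calU_\ab\to\calW_\ab)\to1$ vanishes. Your additional checks of surjectivity and of the identification $\ker(\calX\to\calX_\ab)=(\calK^\calX)'$ are exactly the bookkeeping the paper leaves implicit.
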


    We are no ready to prove the main result of this article.
    Let $\Delta$ be a reduced finite irreducible root system.
    Let $G$ be a free abelian group. 
    Let $R$ be a tame root
    system of kind $\Delta$ extended by $G$.  
    Let $T$ be the symmetric system associated to $R$. Let $\calU$ be
    the initial $T$-reflection group and let $\calW$ be the Weyl
    group. Using the $\ab$-functor we obtain the following commutative
    diagram:
    \begin{align*}
      \xymatrix{
        \ker(\calU\to\calW)
        \ar[rr]
        \ar[d]
        &&
        \calU
        \ar[rr]
        \ar[d]
        &&
        \calW
        \ar[d]
        \\
        \ker(\calU^\ab\to\calW^\ab)
        \ar[rr]
        &&
        \calU^\ab
        \ar[rr]
        &&
        \calW^\ab,
      }
    \end{align*}
    where 
    $\calU\to\calW$ is a $T$-reflection morphism,
    $\calU^\ab\to\calW^\ab$ is a $T^\ab$-reflection morphism and
    $\calU\to\calU^\ab$ and
    $\calW\to\calW^\ab$ are $\ab$-compatible.
    The groups $\calU^\ab$ and $\calW^\ab$ are just the abelianizations
    of $\calU$ and $\calW$ respectively. 
    
    \begin{thm}\label{thm:ab}
      The map
      \begin{math}
        \ker(\calU\to\calW)
        \to
        \ker(\calU^\ab\to\calW^\ab)
      \end{math}
      is an isomorphism.
    \end{thm}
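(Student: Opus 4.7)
The plan is to factor the natural homomorphism through the intermediate reflection groups
$\calU_\ab=\calU/(\calK^\calU)'$ and $\calW_\ab=\calW/(\calK^\calW)'$ introduced in Section~4, writing
\[
  \ker(\calU\to\calW)\longrightarrow
  \ker(\calU_\ab\to\calW_\ab)\longrightarrow
  \ker(\calU^\ab\to\calW^\ab),
\]
and to argue that each of the two arrows is an isomorphism. Since $(\calK^\calU)'\subseteq[\calU,\calU]$ and $(\calK^\calW)'\subseteq[\calW,\calW]$, the abelianization maps $\calU\to\calU^\ab$ and $\calW\to\calW^\ab$ really do factor through $\calU_\ab$ and $\calW_\ab$, so this factorization makes sense.

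First, the isomorphism $\ker(\calU\to\calW)\to\ker(\calU_\ab\to\calW_\ab)$ is exactly Corollary~\ref{cor:abKer}. Its content is that $c_\calU$ is an admissible reflection bihomomorphism, so by the universal property of $\curlywedge$ together with Corollary~\ref{cor:comMap} the induced map $(\calK^\calU)'\to(\calK^\calW)'$ is an isomorphism, whence Lemma~\ref{lem:exact} transfers this to the kernels.

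For the second arrow, I would apply Proposition~\ref{prop:abKerIso} with $\calB=\calU_\ab$ and with $\calB=\calW_\ab$. Both satisfy $\calB=\calB_\ab$ by construction. The hypothesis that $\calK=\ker(\calA\to\calV)$ is free abelian holds under our assumptions: if $R$ is simply laced then $\calK=G\otimes\check\calL$, and if $R$ is tame non-simply laced with twist decomposition $G=G_1\oplus G_2$ then $\calK=(G_1\otimes\calL)\oplus(G_2\otimes\check\calL)$; in either case $\calK$ is free abelian because $G$ (hence its direct summands $G_1$, $G_2$) is free abelian and $\calL,\check\calL$ are free $\Z$-modules. Noting that $(\calU_\ab)^\ab=\calU^\ab$ and $(\calW_\ab)^\ab=\calW^\ab$, Proposition~\ref{prop:abKerIso} supplies natural isomorphisms $\ker(\calU_\ab\to\calA)\cong\ker(\calU^\ab\to\calA^\ab)$ and $\ker(\calW_\ab\to\calA)\cong\ker(\calW^\ab\to\calA^\ab)$.

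To finish, I observe that because $\calA$ is terminal, the maps $\calU_\ab\to\calW_\ab$ and $\calU^\ab\to\calW^\ab$ factor through $\calA$ and $\calA^\ab$ respectively, hence
\[
  \ker(\calU_\ab\to\calW_\ab)=\ker\bigl(\ker(\calU_\ab\to\calA)\to\ker(\calW_\ab\to\calA)\bigr)
\]
and analogously for the abelianized side. Combining this with the two isomorphisms above forces the natural map $\ker(\calU_\ab\to\calW_\ab)\to\ker(\calU^\ab\to\calW^\ab)$ to be an isomorphism, which composed with the first arrow yields the theorem. The main point to watch is that the abstract isomorphisms furnished by Proposition~\ref{prop:abKerIso} actually coincide with the natural transformations, so that all relevant squares commute and one obtains the induced map of the theorem rather than merely an abstract isomorphism; this should follow from the explicit splittings $\calB\cong\calZ\times\calA$ used in the proof of that proposition together with functoriality of the $\ab$-functor.
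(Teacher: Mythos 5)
Your proposal is correct and follows essentially the same route as the paper, which proves the theorem by factoring through $\ker(\calU_\ab\to\calW_\ab)$ and citing Corollary~\ref{cor:abKer} for the first arrow and Proposition~\ref{prop:abKerIso} for the second; you have simply filled in the details the paper leaves implicit. (One small wording slip: since $\calA$ is terminal it is the maps \emph{to} $\calA$ that factor through $\calW_\ab$, resp.\ to $\calA^\ab$ through $\calW^\ab$, not $\calU_\ab\to\calW_\ab$ factoring through $\calA$ --- but the kernel identity you actually use is the correct consequence.)
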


    \begin{proof}
       The composition  
       \begin{math}
         \ker(\calU\to\calW)
         \to
         \ker(\calU_\ab\to\calW_\ab)
         \to
         \ker(\calU^\ab\to\calW^\ab)
       \end{math}
       is an isomorphism 
       due to Corollary~\ref{cor:abKer} 
       and Proposition~\ref{prop:abKerIso}.
    \end{proof}

    \begin{cor}
      If $\ker(\calU_\ab\to\calW_\ab)\neq\{0\}$, then $\calU$ and
      $\calW$ are not isomorphic (as groups).
    \end{cor}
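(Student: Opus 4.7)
The plan is to prove the contrapositive: assuming an abstract group isomorphism $f\colon\calU\to\calW$, I will deduce $\ker(\calU_\ab\to\calW_\ab)=\{0\}$. Two homomorphisms $\calU^\ab\to\calW^\ab$ come into play and must be carefully distinguished. On the one hand, since abelianization is a functor on abstract groups, $f$ descends to an isomorphism of abelian groups $f^\ab\colon\calU^\ab\to\calW^\ab$. On the other hand, initiality of $\calU$ (Proposition~\ref{prop:initTerm}) supplies the canonical $T$-reflection morphism $\pi\colon\calU\to\calW$, which in turn induces a surjective group homomorphism $\pi^\ab\colon\calU^\ab\to\calW^\ab$. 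The composite $(f^\ab)^{-1}\circ\pi^\ab$ is then a surjective endomorphism of the abelian group $\calU^\ab$ whose kernel equals $\ker\pi^\ab$.

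Next I would chain the identifications already established in the paper to compute $\ker\pi^\ab$. By the main theorem (Theorem~\ref{thm:ab}) one has $\ker\pi^\ab\cong\ker\pi=\ker(\calU\to\calW)$, and Corollary~\ref{cor:abKer} in turn identifies the latter with $\ker(\calU_\ab\to\calW_\ab)$, the group hypothesized to be nontrivial. Thus the corollary reduces to establishing that $\calU^\ab$ admits no surjective endomorphism with a nontrivial kernel.

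The hard part is this final Hopfian-type step. By Construction~\ref{constr:ab}, $\calU^\ab$ is isomorphic to $\Z_2^{T^\ab}$, the free elementary abelian $2$-group on the orbit set $T^\ab$. When $T^\ab$ is finite (as it is in the EARS setting to which the main theorem is primarily applied), $\calU^\ab$ is a finite $\F_2$-vector space and hence Hopfian, so $(f^\ab)^{-1}\circ\pi^\ab$ must be injective and the kernel vanishes. The principal obstacle is the case of infinite $T^\ab$, where $\Z_2^{T^\ab}$ is not Hopfian as an abstract abelian group and a naive dimension comparison is insufficient; in that regime one must additionally exploit that $\pi^\ab$ is a $T^\ab$-reflection morphism sending the distinguished generating set $T^{\calU^\ab}$ onto $T^{\calW^\ab}$, and modify $f^\ab$ by an abstract automorphism of $\calU^\ab$ permuting its distinguished generating set so that it becomes a reflection morphism, whereupon initiality of $\calU^\ab$ forces it to coincide with $\pi^\ab$ and hence forces $\ker\pi^\ab=\{0\}$.
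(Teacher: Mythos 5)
Your reduction to showing that $\ker\pi^\ab=\{0\}$, where $\pi^\ab\colon\calU^\ab\to\calW^\ab$ is the canonical reflection morphism, is correct as far as it goes (the chain through Theorem~\ref{thm:ab} and Corollary~\ref{cor:abKer} is the right one), and in the case where $T^\ab$ is finite your Hopfian argument does close the proof. But the final step has a genuine gap exactly where you flag it. The hypotheses only require $G$ to be free abelian, not finitely generated, and by Proposition~\ref{prop:rootOrbit} the orbit set $T^\ab$ is then parametrized by cosets such as $G/(2G_1+G_2)\cong G_1\otimes\Z_2$, which is infinite when $G_1$ has infinite rank. In that regime $\calU^\ab$ is an infinite-dimensional $\F_2$-vector space, hence not Hopfian, and your proposed repair does not work: an abstract isomorphism $f\colon\calU\to\calW$ induces an arbitrary $\F_2$-linear isomorphism $f^\ab$, which need not carry the distinguished generating set $T^{\calU^\ab}$ to $T^{\calW^\ab}$ (or even to a generating set of reflections), so there is no automorphism of $\calU^\ab$ ``permuting the distinguished generators'' that turns $(f^\ab)^{-1}\circ\pi^\ab$ into a $T^\ab$-reflection morphism. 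Indeed the whole strategy of distinguishing $\calU$ from $\calW$ through their abelianizations is doomed in the infinite case, since $\calU^\ab$ and $\calW^\ab$ can be abstractly isomorphic even when $\ker\pi^\ab\neq\{0\}$.

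The paper avoids this entirely by comparing centers rather than abelianizations. By Lemma~\ref{lem:centralExt} the subgroup $\ker(\calU\to\calW)$ is central in $\calU$, and by Theorem~\ref{thm:ab} it is isomorphic to the nontrivial subgroup $\ker(\calU^\ab\to\calW^\ab)$ of the elementary abelian $2$-group $\calU^\ab$, so the center of $\calU$ contains an element of order two. On the other hand, by Lemma~\ref{lem:center} and Corollary~\ref{cor:aInitial} the center of $\calW$ is $\ker(\calW\to\calA)$, which embeds in $\calK\curlywedge\calK$ and is therefore torsion-free by Corollary~\ref{cor:kKTorsionFree}. Since any abstract isomorphism $\calU\to\calW$ would restrict to an isomorphism of centers, no such isomorphism exists. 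This argument is insensitive to the cardinality of $T^\ab$ and is the route you should take; your approach would need the additional standing assumption that $G$ is finitely generated to be salvageable.
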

    
    \begin{proof}
      Suppose $\ker(\calU_\ab\to\calW_\ab)\neq\{0\}$.
      According to Lemma~\ref{lem:center} 
      and Corollary~\ref{cor:aInitial} the center of  
      $\calW$ is isomorphic to a the kernel of $\calW\to\calA$, which
      is isomorphic to 
      a subgroup of $\calK\curlywedge\calK$ and is thus torsion-free
      by Corollary~\ref{cor:kKTorsionFree}.  The center of
      $\calU$ contains $\ker(\calU\to\calW)$, which contains elements
      of order two.
    \end{proof}

    \begin{rem}\label{rem:aAb}
      Since $\calA=\calK\rtimes\calV$, the commutator subgroup 
      of $\calA$ is given
      by
      \begin{align*}
        \calA'=\calK_\eff\rtimes\calV'
        \text{~~with~~}
        \calK_\eff:=
        [\calA,\calK]
        =
        \langle k-v.k
        ~|~k\in\calK,v\in\calV\rangle.
      \end{align*}
      We have
      \begin{align*}
        \calK_\eff
        &=
        G_1\otimes\calL_\eff
        \oplus
        G_2\otimes\check\calL_\eff.
      \end{align*}
      We set ${^\ab\calK}:=\calK/\calK_\eff$ and thus have
      \begin{math}
        \calA^\ab={^\ab\calK}\times\calV^\ab.
      \end{math}
      The group $\calV^\ab$ is the initial $\overline T^\ab$-reflection
      group and thus it is a proper reflection group according to
      Construction~\ref{constr:ab}.

      If $\Delta$ is simply laced, then Proposition~\ref{prop:calLEff}
      implies
      \begin{align*}
        {^\ab\calK}
        &\cong
        G\otimes\frac{\check\calL}{\check\calL_\eff}
        \cong
        \begin{cases}
          G\otimes\Z_2
          &
          \text{if $R$ is of type $A_1$,}
          \\
          \{0\}
          &
          \text{otherwise.}
        \end{cases}
      \end{align*}
      If $\Delta$ is non-simply laced and 
      $G=G_1\oplus G_2$ is a twist decomposition, then $^\ab\calK$ is
      isomorphic to
      \begin{lastalign*}{
        {^\ab\calK}
        \cong
        \left(G_1\otimes\frac{\calL}{\calL_\eff}\right)
        \oplus
        \left(G_2\otimes\frac{\check\calL}{\check\calL_\eff}\right)
        \cong
        \begin{cases}
          G\otimes\Z_2
          &
          \text{if $R$ is of type $B_2$,}
          \\
          G_1\otimes\Z_2
          &
          \text{if $R$ is of type $B_\ell(\ell\ge3)$,}
          \\
          G_2\otimes\Z_2
          &
          \text{if $R$ is of type $C_\ell(\ell\ge3)$,}
          \\
          0
          &
          \text{if $R$ is of type $F_4$ or $G_2$.}
        \end{cases}}
      \end{lastalign*}
    \end{rem}

    \begin{prop}\label{prop:rootOrbit}
      We have
      \begin{align*}
        \calA.(h,\beta)
        =
        \begin{cases}
          (
          h
          +\langle\check\Delta,\beta\rangle\langle S_\sh\rangle
          ,\calV.\beta)
          &
          \text{in the simply laced case,}
          \\
          (
          h
          +\langle\check\Delta_\lng,\beta\rangle\langle S_\sh\rangle
          +\langle\check\Delta_\sh,\beta\rangle\langle S_\lng\rangle
          ,\calV.\beta),
          &
          \text{otherwise.}
        \end{cases}
      \end{align*}
      More precisely,
      \begin{align*}
        \calA.(h,\beta)
        =
        \begin{cases}
          (h+2G,\Delta)
          &
          \text{if $R$ is of type $A_1$,}
          \\
          (h+2G_1+G_2,\Delta_\sh)
          &
          \text{if $R$ is of type $B_2$ 
            and $\beta\in\Delta_\sh$,}
          \\
          (h+2G_1+2G_2,\Delta_\lng)
          &
          \text{if $R$ is of type $B_2$ 
            and $\beta\in\Delta_\lng$,}
          \\
          (h+2G_1+G_2,\Delta_\sh)
          &
          \text{if $R$ is of type $B_\ell(\ell\ge3)$
             and $\beta\in\Delta_\sh$,}
          \\
           (h+2G_1+G_2,\Delta_\lng)
           &
           \text{if $R$ is of type $B_\ell(\ell\ge3)$ 
             and $\beta\in\Delta_\lng$,}
           \\
          (h+G_1+G_2,\Delta_\sh)
          &
          \text{if $R$ is of type $C_\ell(\ell\ge3)$ 
            and $\beta\in\Delta_\sh$,}
          \\
          (h+2G_1+2G_2,\Delta_\lng)
          &
          \text{if $R$ is of type $C_\ell(\ell\ge3)$ 
            and $\beta\in\Delta_\lng$,}
          \\
          (G,\Delta)
          &
          \text{if $R$ is of any other type.}
        \end{cases}
      \end{align*}
    \end{prop}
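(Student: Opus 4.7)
My plan is to use the semidirect product structure $\calA=\calK\rtimes\calV$ and the $\calV$-invariance of $\calK$ to reduce the orbit computation to evaluating $\calK\subseteq\Hom(\calL,G)$ at $\beta$, and then to compute this evaluation using the explicit description of $\calK$ already established.

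Working through the identification of Remark~\ref{rem:homId}, one checks directly from the defining formula of $r_{(g,\alpha)}$ that the action of $(k,v)\in\calK\rtimes\calV$ on $G\times\calL$ is
\begin{equation*}
  (k,v).(h,\beta)=(h+k(v.\beta),v.\beta).
\end{equation*}
Hence the orbit projects onto $\calV.\beta$ in the second factor. For fixed $\beta'=v.\beta$ the first coordinates range over $h+\{k(v.\beta):k\in\calK\}$, and since $\calV.\calK=\calK$ together with $(v.k)(v.\beta)=k(\beta)$ shows that $\{k(v.\beta):k\in\calK\}=\{k(\beta):k\in\calK\}=:X_\beta$ is independent of $v$, the orbit equals $(h+X_\beta,\calV.\beta)$. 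The problem reduces to identifying the subgroup $X_\beta\subseteq G$.

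In the simply laced case $\calK=G\otimes\check\calL$ and $X_\beta=G\cdot\langle\check\Delta,\beta\rangle=\langle\check\Delta,\beta\rangle\langle S_\sh\rangle$. In the non-simply laced reduced case the preceding lemma gives $\calK=(G_1\otimes\calL)\oplus(G_2\otimes\check\calL)$ with $\calL$ realized inside $\check\calL$ via the embedding of Lemma~\ref{lem:nonsimplyId}. Evaluating each summand at $\beta$ and combining $\langle\check\calL,\beta\rangle=\langle\check\Delta_\sh,\beta\rangle+\langle\check\Delta_\lng,\beta\rangle$ with $\langle S_\sh\rangle=G$ and $\langle S_\lng\rangle=k_\Delta G_1+G_2$ from Remark~\ref{rem:twist} produces the general formula
\begin{equation*}
  X_\beta=\langle\check\Delta_\lng,\beta\rangle\langle S_\sh\rangle+\langle\check\Delta_\sh,\beta\rangle\langle S_\lng\rangle.
\end{equation*}

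For the refined case-by-case list I would read off the cyclic subgroups $\langle\check\Delta_\sh,\beta\rangle,\langle\check\Delta_\lng,\beta\rangle\subseteq\Z$ from the standard models of each irreducible type and from Table~\ref{tab:rootSystems}, distinguishing whether $\beta$ is short or long, and substitute into the formula above. The $BC_\ell$ and $A_1$ rows are handled by passing to the trimmed reduced root system $\Delta'$ as in the earlier remark of this section. The main technical point is keeping the arithmetic straight when two different integer subgroups of $\Z$ combine with the two summands $G_1,G_2$ of $G$; I expect the only genuinely delicate distinction to be between $B_2$ and $B_\ell(\ell\ge3)$, where the rank-$3$ root system first introduces short coroots $\pm(\epsilon_i\pm\epsilon_j)$ with $j\ge3$ that pair to $\pm1$ with a long $\beta$, changing $\langle\check\Delta_\sh,\beta\rangle$ from $2\Z$ to $\Z$ and thus shifting the resulting subgroup of $G$.
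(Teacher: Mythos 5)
Your proposal is correct and takes essentially the same route as the paper: the paper also reduces the orbit to the coset $h+\calK(\beta)$ in the first coordinate and $\calV.\beta$ in the second (it cites an earlier article for this general formula, whereas you rederive it directly from the semidirect-product action of $\calK\rtimes\calV$ on $G\times\calL$), and then specializes using $\langle S_\sh\rangle=G_1+G_2$, $\langle S_\lng\rangle=k_\Delta G_1+G_2$ and a table of the pairing values $\langle\check\Delta_x,\Delta_y\rangle$ for each type. Your case analysis, in particular the $B_2$ versus $B_\ell(\ell\ge3)$ distinction arising because $\langle\check\Delta_\sh,\beta\rangle$ changes from $\{0,\pm2\}$ to $\{0,\pm1,\pm2\}$ for long $\beta$ once a third index is available, agrees with the data the paper tabulates.
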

    \begin{proof}
      The first equation in the proposition can be derived in the
      same way as done in the proof of
      \cite{myWeyl}~Proposition~5.1. What follows can be derived from it using
      $\langle S_\sh\rangle=G_1+G_2$ and
      $\langle S_\lng\rangle=k_\Delta G _1+G_2$ and Table~\ref{tab:pairing},
      the data of which is taken from \cite{EAWG} with one correction.      
      \begin{table}[t]
        \centering
        \begin{tabular}{c|c|c|c|c}
          Type
          &
          $\langle\check\Delta_\sh,\Delta_\sh\rangle$
          &
          $\langle\check\Delta_\sh,\Delta_\lng\rangle$
          &
          $\langle\check\Delta_\lng,\Delta_\sh\rangle$
          &
          $\langle\check\Delta_\lng,\Delta_\lng\rangle$
          \\\hline
          $A_1$
          &
          $\{\pm2\}$
          &
          $\emptyset$
          &
          $\emptyset$
          &
          $\emptyset$
          \\\hline
          $B_2$
          &
          $\{\pm1\}$
          &
          $\{0,\pm2\}$
          &
          $\{0,\pm2\}$
          &
          $\{\pm2\}$
          \\\hline
          $B_\ell(\ell\ge3)$
          &
          $\{0,\pm1\}$
          &
          $\{0,\pm1,\pm2\}$
          &
          $\{0,\pm2\}$
          &
          $\{0,\pm2\}$
          \\\hline
          $C_\ell(\ell\ge3)$
          &
          $\{0,\pm1\}$
          &
          $\{0,\pm2\}$
          &
          $\{0,\pm1,\pm2\}$
          &
          $\{0,\pm2\}$
          \\\hline
          $F_4$
          &
          $\{0,\pm1\}$
          &
          $\{0,\pm1,\pm2\}$
          &
          $\{0,\pm1,\pm2\}$
          &
          $\{0,\pm2\}$
          \\\hline
          $G_2$
          &
          $\{0,\pm1\}$
          &
          $\{0,\pm1,\pm2\}$
          &
          $\{0,\pm1,\pm2\}$
          &
          $\{0,\pm3\}$
          \\\hline
          other
          &
          $\{0,\pm1,\pm2\}$
          &
          $\emptyset$
          &
          $\emptyset$
          &
          $\emptyset$
        \end{tabular}
        \caption{Pairing values for the 
          finite reduced irreducible root systems}
        \label{tab:pairing}
      \end{table}
    \end{proof}
    \begin{rem}
      Under the hypothesis of Theorem~\ref{thm:ab} the word problem
      for the presentation (\ref{eq:preConj}) of $\calU$ has the
      following solution:
      The element $t_1^\calU t_2^\calU\cdots t_n^\calU$ is trivial in
      $\calU$ if and only if the following conditions are satisfied:
      \begin{enumerate}
  \item 
        The element
        $\overline{t_1}^\calW\overline{t_2}^\calW
        \cdots\overline{t_n}^\calW$ 
        is trivial in
        $\calV$.
  \item 
        The element
        $t_1^{\calK^\calW}t_2^{\calK^\calW}\cdots t_n^{\calK^\calW}$ 
        is trivial in
        $\calK^\calW$.
  \item
        The element
        $(t_1^\ab)^{\calU^\ab}(t_2^\ab)^{\calU^\ab}
        \cdots
        (t_n^\ab)^{\calU^\ab}$
        is trivial in $\calU^\ab$.
      \end{enumerate}
      Condition (i) can be decided using the solution for the word
      problem for Coxeter groups (see for
      instance,\cite{hum}). Condition (ii) can be decided by
      evaluating the elements in $\calK^\calW$.
      In view of Construction~\ref{constr:ab},
      condition (iii) can be decided using
      Proposition~\ref{prop:rootOrbit}. 
    \end{rem}

    \begin{thm}
      The $T^\ab$-reflection group $\calA^\ab$ is proper.
    \end{thm}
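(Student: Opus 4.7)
My plan is to verify the two defining conditions for properness of $\calA^\ab$ separately, using its decomposition as a (direct) product. By Remark~\ref{rem:aAb} we have $\calA^\ab \cong {^\ab\calK}\times\calV^\ab$, and under this identification the image of $r_{(g,\alpha)}^\calA$ in $\calA^\ab$ is the pair $\big(g\otimes\check\alpha + \calK_\eff,\, r_\alpha\calV'\big)$. The map $\sbullet^{\calA^\ab}:T^\ab\to\calA^\ab$ is well-defined on the orbit space because the target is abelian.

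First I would dispose of non-triviality: the $\calV^\ab$-component $r_\alpha\calV'$ is non-trivial for every $\alpha$ because $\calV^\ab$ is the initial ${\overline T}^\ab$-reflection group and is identified in Construction~\ref{constr:ab} with $\Z_2^{{\overline T}^\ab}$, hence is proper.

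For injectivity, I would suppose $t_i=r_{(g_i,\alpha_i)}$ ($i=1,2$) satisfy $t_1^{\calA^\ab}=t_2^{\calA^\ab}$. Comparing $\calV^\ab$-components and using properness of $\calV^\ab$, the reflections $r_{\alpha_1}$ and $r_{\alpha_2}$ lie in the same $\calV$-orbit of $\overline T$, so $\alpha_1$ and $\alpha_2$ have the same length. Choosing $v\in\calV$ with $v.\alpha_1=\pm\alpha_2$, conjugation of $t_1^\calA$ by (a lift of) $v$ preserves both the $\calA$-orbit of $t_1$ in $T$ and the image of $t_1^\calA$ in the abelian group $\calA^\ab$. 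Absorbing the sign via $r_{(g,\alpha)}=r_{(-g,-\alpha)}$ reduces us to $\alpha_1=\alpha_2=\alpha$.

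Under this reduction the hypothesis becomes $(g_1-g_2)\otimes\check\alpha\in\calK_\eff$, and it remains to show that this forces $(g_1,\alpha)$ and $(g_2,\alpha)$ to lie in the same $\calA$-orbit of $G\times\Delta$. By Proposition~\ref{prop:rootOrbit} the latter amounts to $g_1-g_2$ lying in an explicit subgroup $H_\alpha\subseteq G$ depending on the type of $R$ and the length of $\alpha$. Using Remark~\ref{rem:aAb} to describe $\calK_\eff$ as $(G_1\otimes\calL_\eff)\oplus(G_2\otimes\check\calL_\eff)$ (in the non-simply-laced tame case; simpler in the simply-laced one), together with the computation of $\calL/\calL_\eff$ and $\check\calL/\check\calL_\eff$ via Proposition~\ref{prop:calLEff} applied to $\Delta$ and $\check\Delta$, and matching the resulting kernel of the map $g\mapsto g\otimes\check\alpha\pmod{\calK_\eff}$ against $H_\alpha$, the verification in each type reduces to identifying two isomorphic finite abelian quotients of $G$. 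I expect the main obstacle to be this type-by-type bookkeeping for the non-simply-laced reduced types $B_2$, $B_\ell(\ell\ge3)$ and $C_\ell(\ell\ge3)$, where the twist decomposition $G=G_1\oplus G_2$ and the identification $\calL\subseteq\check\calL$ of Lemma~\ref{lem:nonsimplyId} interact non-trivially and one must carefully track how short versus long reflections contribute to each $\Z_2$-summand of ${^\ab\calK}$.
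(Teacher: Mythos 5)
Your proposal follows essentially the same route as the paper's proof: non-triviality via the $\calV^\ab$-component, and injectivity by reducing two reflections with equal image in ${^\ab\calK}\times\calV^\ab$ to the same underlying root and then comparing the kernel of $g\mapsto g\otimes\check\alpha \bmod \calK_\eff$ with the $\calA$-orbits described in Proposition~\ref{prop:rootOrbit}. The type-by-type bookkeeping you defer is exactly the short case analysis ($A_1$, $B_2$, $B_\ell(\ell\ge3)$, $C_\ell(\ell\ge3)$, and the single-orbit types) that the paper carries out using Remark~\ref{rem:aAb} and the twist decomposition.
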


    \begin{proof}
      Recall that
      \begin{math}
        \calA^\ab
        \cong
        {^\ab\calK}\times\calV^\ab.
      \end{math}
      The map $T^\ab\to\calA^\ab$ does not contain 0
      in its image, since the map $\overline T^\ab\to\calV^\ab$
      does not. In the following we will show that
      the map $T^\ab\to\calA^\ab$ is injective.

      Let 
      $(h_1,\beta_1)$
      and
      $(h_2,\beta_2)\in R$. Set 
      $t_1:=r_{(h_1,\beta_1)}$ and
      $t_2:=r_{(h_2,\beta_2)}$. Now suppose  
      $(t_1^\ab)^{\calA^\ab}=(t_2^\ab)^{\calA^\ab}$. So we have
      \begin{align*}
        (h_1\otimes\check\beta_1,r_{\beta_1})^\ab
        =
        (t_1^\calA)^\ab
        =
        (t_1^\ab)^{\calA^\ab}
        =
        (t_2^\ab)^{\calA^\ab}
        =
        (t_2^\calA)^\ab
        =
        (h_2\otimes\check\beta_2,r_{\beta_2})^\ab.
      \end{align*}
      So $\beta_1$ and $\beta_2$ are in the same $\calV$-orbit, say
      $\beta_1=v.\beta_2$. Moreover we have
      \begin{align*}
        h_1\otimes\check\beta_1
        \equiv
        h_2\otimes\check\beta_2        
        \equiv
        h_2\otimes(v.\check\beta_2)        
        \equiv
        h_2\otimes\check\beta_1        
        \mod
        \calK_\eff,
      \end{align*}
      in other words
      \begin{math}
        (h_2-h_1)\otimes\check\beta_1
      \end{math}
      is mapped to zero by $\calK\to{^\ab\calK}$. We will
      distinguish between a number of cases according to the types of
      $R$. In each case we will show
      that
      $(h_1,\beta_1)$ and
      $(h_2,\beta_2)$ are
      in the same $\calA$-orbit according to Proposition~\ref{prop:rootOrbit}.
      This entails $t_1^\ab=t_2^\ab$. Set $g=h_2-h_1$. 
 
      {\bf Case $A_1$:}~~ In this case we have $g\in 2G$ by
      Remark~\ref{rem:aAb}. 

      For the next three cases keep in mind that according to
      the twist decomposition, $g\in k_\Delta G_1+G_2$ if $\beta_1$ is long. I
      those cases $k_\Delta=2$.

      {\bf Case $B_2$:}~~ If $\beta\in\Delta_\sh$ then $g\in 2G_1+G_2$
      by Remark~\ref{rem:aAb}. If $\beta\in\Delta_\lng$ then 
      $g\in G_1+2G_2$ by Remark~\ref{rem:aAb} and thus 
      $g\in 2G_1+2G_2$.

      {\bf Case $B_\ell(\ell\ge3)$:}~~ 
      If $\beta\in\Delta_\sh$ then $g\in 2G_1+G_2$
      by Remark~\ref{rem:aAb}. If $\beta\in\Delta_\lng$ then 
      $g\in 2G_1+G_2$.

      {\bf Case $C_\ell(\ell\ge3)$:}~~ 
      If $\beta\in\Delta_\lng$ then 
      $g\in G_1+2G_2$ by Remark~\ref{rem:aAb} and thus 
      $g\in 2G_1+2G_2$.
      
      In the remaining cases there is only a single $\calA$-orbit in $R$, so
      we are done.
    \end{proof}

    Under the hypotheses of Theorem~\ref{thm:ab} we have

    \begin{cor}
      If $\Delta$ is of type 
      $A_\ell(\ell\ge2)$,
      $D_\ell(\ell\ge4)$,
      $E_\ell(8\ge\ell\ge6)$,
      $F_4$, or
      $G_2$, then $\calU\to\calW$ is an isomorphism.
    \end{cor}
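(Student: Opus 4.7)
The plan is to combine Theorem~\ref{thm:ab} with the orbit count from Proposition~\ref{prop:rootOrbit} and the preceding theorem stating that $\calA^\ab$ is proper. Since every $T$-reflection morphism is surjective, it suffices to show that $\ker(\calU\to\calW)=1$, and by Theorem~\ref{thm:ab} this reduces to showing that the natural map $\calU^\ab\to\calW^\ab$ is injective.

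First I would determine the set $T^\ab$, which by construction is the orbit space of $T$ (equivalently of $R$) under the action of the terminal reflection group $\calA$. Inspecting the list of types in the hypothesis against the case distinction in Proposition~\ref{prop:rootOrbit}, every one of the types $A_\ell(\ell\ge 2)$, $D_\ell(\ell\ge 4)$, $E_6$, $E_7$, $E_8$, $F_4$, $G_2$ falls under the final ``other'' case of that proposition, so $\calA$ acts transitively on $R$ and hence on $T$. Consequently $T^\ab$ is a singleton.

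Next I would use Construction~\ref{constr:ab}: the initial $T^\ab$-reflection group has presentation $\langle s^\calV\mid (s^\calV)^2=1,\ s\in T^\ab\rangle$ with $s,t$ commuting, and is isomorphic to $\Z_2^{T^\ab}$. Since $T^\ab$ has a single element, this gives $\calU^\ab\cong\Z_2$, generated by the common image of every $t^\calU$. Similarly $\calW^\ab$ is a $T^\ab$-reflection group, hence a quotient of the initial one, so it is either trivial or $\Z_2$. To rule out the trivial case, I would use that by the preceding theorem the $T^\ab$-reflection group $\calA^\ab$ is proper, so the image of the single generator in $\calA^\ab$ is non-trivial; factoring $\calU^\ab\to\calA^\ab$ through $\calW^\ab$ forces $\calW^\ab\cong\Z_2$ and the map $\calU^\ab\to\calW^\ab$ to be an isomorphism.

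Putting everything together, $\ker(\calU^\ab\to\calW^\ab)=1$, so by Theorem~\ref{thm:ab} also $\ker(\calU\to\calW)=1$, and $\calU\to\calW$ is an isomorphism. I expect no real obstacle here: the only work is checking the orbit cases in Proposition~\ref{prop:rootOrbit} for each listed type, and verifying that the listed types are precisely those for which $T^\ab$ collapses to a single point. The properness of $\calA^\ab$ is exactly what keeps $\calW^\ab$ from degenerating.
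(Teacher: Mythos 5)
Your proposal is correct and follows essentially the same route as the paper: reduce to the abelianizations via Theorem~\ref{thm:ab}, observe from Proposition~\ref{prop:rootOrbit} that $T^\ab$ is a singleton for the listed types, and use the properness of $\calA^\ab$ together with the chain $\calU^\ab\to\calW^\ab\to\calA^\ab$ to force $\calU^\ab\to\calW^\ab$ to be an isomorphism. The only difference is that you spell out $\calU^\ab\cong\Z_2$ explicitly via Construction~\ref{constr:ab}, which the paper leaves implicit.
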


    \begin{proof}
      In the cases above the symmetric system $T^\ab$ is a singleton
      due to Proposition~\ref{prop:rootOrbit}. Since $\calA^\ab$ is proper, it
      is of order two. We have the $T^\ab$-reflection morphisms
      $\calU^\ab\to\calW^\ab\to\calA^\ab$. This forces
      $\calU^\ab\to\calA^\ab$ 
      to be an isomorphism. We are done by Theorem~\ref{thm:ab}
    \end{proof}

    This result generalizes a result about Weyl groups of EARSs of simply laced
    type, proved for the first time in \cite{krylyuk}. 
    It seems very likely that
    other generalizations of this kind will be obtained by
    investigating the sequence  ${\calU^\ab\to\calW^\ab\to\calA^\ab}$ of
    elementary abelian two-groups. Since these are vector spaces over
    the Galois field with two elements, they are subject to arguments
    from linear Algebra.

    \bibliographystyle{alpha}
    \bibliography{../../references/references}
  \end{document}